\newcolumntype{P}[1]{>{\centering\arraybackslash}p{#1}}
\theoremstyle{plain}
\newtheorem{thm}{Theorem}[section]
\newtheorem{cor}[thm]{Corollary}
\newtheorem{lem}[thm]{Lemma}
\newtheorem{prop}[thm]{Proposition}
\def\@rst #1 #2other{#1}
\newcommand\MR[1]{\relax\ifhmode\unskip\spacefactor3000 \space\fi
  \MRhref{\expandafter\@rst #1 other}{#1}}
\newcommand{\MRhref}[2]{\href{http://www.ams.org/mathscinet-getitem?mr=#1}{MR#2}}
\theoremstyle{definition}
\newtheorem{remark}[thm]{Remark}
\numberwithin{equation}{section}
\newcommand{\dsb}{\begin{adjustwidth}{2.5em}{0pt}
\begin{footnotesize}}
\newcommand{\dse}{\end{footnotesize}
\end{adjustwidth}}
\newcommand{\ssb}{\begin{adjustwidth}{2.5em}{0pt}}
\newcommand{\sse}{\end{adjustwidth}}
\newcommand{\aryb}{\begin{eqnarray*}}
\newcommand{\arye}{\end{eqnarray*}}
\def\alb#1\ale{\begin{align*}#1\end{align*}}
\def\allb#1\alle{\begin{align}#1\end{align}}
\newcommand{\eqb}{\begin{equation}}
\newcommand{\eqe}{\end{equation}}
\newcommand{\eqbn}{\begin{equation*}}
\newcommand{\eqen}{\end{equation*}}
\newcommand{\BB}{\mathbb}
\newcommand{\mcl}{\mathcal}
\newcommand{\ind}{\alpha}
\DeclareMathOperator{\Perm}{Perm}
\DeclareMathOperator{\occ}{occ}
\newcommand{\pocc}{\widetilde{\occ}}
\DeclareMathOperator{\cocc}{c\text{-}occ}
\DeclareMathOperator{\pcocc}{\widetilde{c\text{-}occ}}
\newcommand{\bbP}{\mathbb{P}}
\newcommand{\bbE}{\mathbb{E}}
\newcommand{\bbR}{\mathbb{R}}
\let\originalleft\left
\let\originalright\right
\renewcommand{\left}{\mathopen{}\mathclose\bgroup\originalleft}
\renewcommand{\right}{\aftergroup\egroup\originalright}
\DeclareMathOperator{\Leb}{Leb}
\DeclareMathOperator{\pat}{pat}
\DeclareMathOperator{\Luce}{Luce}
\DeclareMathOperator{\Var}{Var}
\newcommand{\dd}{\text{d}}
\title{Permuton and local limits for the Luce model}
 \date{ }
 \author{
\begin{tabular}{c} Jacopo Borga\\[-3pt]\small MIT \end{tabular}
\begin{tabular}{c} Sourav Chatterjee\\[-3pt]\small Stanford University \end{tabular}
\begin{tabular}{c} Persi Diaconis\\[-3pt]\small Stanford University \end{tabular}
}
\begin{document}

\maketitle
\thispagestyle{empty}
\vspace{-0.5cm}

\begin{abstract}
    We investigate the asymptotic properties of permutations drawn from the Luce model, a natural probabilistic framework in which permutations are generated sequentially by sampling without replacement, with selection probabilities proportional to prescribed positive weights. These permutations arise in applications such as ranking models, the Tsetlin library, and related Markov processes.
    Under minimal assumptions on the weights, we establish a permuton limit theorem describing the global behavior of Luce-distributed permutations and derive an explicit density of the limiting permuton. We further compute limiting pattern densities and analyze the differences between exact Luce permutations and their permuton approximations. We also study the local convergence of these permutations, proving a quenched Benjamini--Schramm limit and a central limit theorem for consecutive pattern occurrences. Finally, we prove a central limit theorem for the number of inversions.
\end{abstract}

%KEYWORDS: ...........

%AMS SUBJECT CLASS: ...............

\tableofcontents

\section{Introduction}

\subsection{The Luce model on permutations}

We consider the Luce model on permutations defined as follows. Fix some positive real weights $\theta_1,\dots,\theta_n$ with total sum $w_n=\theta_1+\dots+\theta_n$ and consider the function
\begin{equation}\label{eq:p-function}
    p\left(\theta_1,\dots,\theta_n\right):=\frac{\theta_1}{w_n}\times\frac{\theta_2}{w_n-\theta_1}\times\frac{\theta_3}{w_n-\theta_1-\theta_2}\times\cdots\times\frac{\theta_n}{\theta_n}.
\end{equation}
Note that the weights $\theta_i$ may depend on $n$, and could therefore be written as $\theta_i^n$. For simplicity, we will suppress this dependence in our notation and write simply $\theta_i$ throughout.
We consider the following distribution, called the \textbf{Luce distribution}, on the set $\mcl S_n$ of permutations of $[n]=\{1,2,\dots,n\}$: for all $\sigma_n\in\mcl S_n$,
\begin{equation}\label{eq:defn-Luce}
	\BB P(\sigma_n)=p\left(\theta_{\sigma_n^{-1}(1)},\dots,\theta_{\sigma_n^{-1}(n)}\right).
\end{equation}
Sometimes, we write $\Luce(\theta_1,\dots,\theta_n)$ for the law of a random Luce-distributed permutation of size $n$ with weights $\theta_1,\dots,\theta_n$. Note that multiplying all weights by a fixed constant does not change the Luce distribution. We will therefore freely rescale when convenient (e.g., replacing $\theta_i$ by $\theta_i/n$, or normalizing the $\theta_i$ so that $\sum_i\theta_i=n$ or $1$).

%One can sample the permutation $\sigma_n$ as follows:  Put $n$ numbered balls with weights $\theta_1,\dots,\theta_n$ into an urn. Each time, withdraw a ball from the urn (sampling without replacement) with a probability proportional to its weight (relative to the remaining balls). Record the order of the  withdrawn balls, which gives rise to the inverse of a Luce-distributed permutation. 

\paragraph{Sampling and clarification on inverses.}
One can sample a Luce-distributed permutation $\sigma_n$ as follows.  
Place $n$ numbered balls with weights $\theta_1,\dots,\theta_n$ into an urn.  
At each step, withdraw one ball (sampling without replacement) with probability proportional to its weight relative to the remaining balls. Record the order in which the balls are withdrawn, which we denote (in one-line notation; see below for more details on notation) by
\[
\tau = \tau_1\,\tau_2\,\dots\,\tau_n,
\]
where $\tau_j$ is the label of the $j$-th ball drawn.  
From this draw order we obtain a permutation $\sigma\in\mcl S_n$ by declaring
\[
\sigma(\tau_j)=j \quad \text{for all } j=1,\dots,n,
\]
that is, $\sigma(i)$ is the \emph{position} at which label $i$ is drawn.
Equivalently, $\sigma^{-1}(j)=\tau_j$ gives the label drawn at step $j$.  
Thus, in~\eqref{eq:defn-Luce}, the argument of $p$ is simply the sequence of weights in the actual draw order.

\paragraph{Example.}
Suppose we have three balls labeled $1,2,3$ with weights $\theta_1,\theta_2,\theta_3>0$.  
The probability that the draw order is
\[
\tau=312,
\]
i.e.\ ball $3$ is drawn first, then ball $1$, and finally ball $2$, is
\[
\frac{\theta_3}{\theta_1+\theta_2+\theta_3}\;\times\;\frac{\theta_1}{\theta_1+\theta_2}\;\times\;\frac{\theta_2}{\theta_2}
\;=\;\frac{\theta_1\theta_3}{(\theta_1+\theta_2+\theta_3)(\theta_1+\theta_2)}.
\]
This corresponds to the permutation $\sigma$ with
$\sigma(3)=1,\ \sigma(1)=2,\ \sigma(2)=3$, i.e.\ $\sigma=231$.  
Note that evaluating~\eqref{eq:defn-Luce} with
\[
(\theta_{\sigma^{-1}(1)},\theta_{\sigma^{-1}(2)},\theta_{\sigma^{-1}(3)})
= (\theta_3,\theta_1,\theta_2)
\]
yields exactly the same probability.

\begin{remark}\label{rmk:inv}
    Note that our definition in \eqref{eq:defn-Luce} differs from the one provided, for instance, in \cite{chatterjee2023enumerative}. More precisely, if $\sigma\sim\Luce(\theta_1,\dots,\theta_n)$ is defined as in~\eqref{eq:defn-Luce}, then $\sigma^{-1}$ is $\Luce(\theta_1,\dots,\theta_n)$-distributed according to the definition in~\cite{chatterjee2023enumerative}. We made this choice  to simplify some later notation. In this paper, we will consistently use our definition in \eqref{eq:defn-Luce} and translate the results from~\cite{chatterjee2023enumerative} in terms of our definition when we refer to them.
\end{remark}

\cite[Section 2]{chatterjee2023enumerative} provides a literature review. Several applied problems are shown to give rise to the Luce model, ranging from the Tsetlin library problem; the stationary distribution of a Markov chain that, at each step, selects a card labeled $i$ with probability proportional to $\theta_i$ and moves it to the top; certain problems in psychology, as well as applications to poker and horse racing. The main results of~\cite{chatterjee2023enumerative} determine the limiting distribution of $\sigma^{-1}(1)\dots \sigma^{-1}(k)$ (the labels on the
top $k$  cards) and $\sigma^{-1}(n-k)\dots \sigma^{-1}(n)$ (the labels on the
bottom $k$  cards) in a Luce distributed permutation $\sigma$: $\sigma^{-1}(1)\dots \sigma^{-1}(k)$ are approximately i.i.d.\ picks from the weights; $\sigma^{-1}(n-k)\dots \sigma^{-1}(n)$ are very different.

 We also recall that one standard choice for the weights $\theta_i$ is the one given by the Sukhatme weights $\theta_i=n-i+1$, see \cite[Section  2.2.4]{chatterjee2023enumerative} for further explanations.

\subsection{Limits of random permutations: local and permuton convergence}

A classical (but somewhat ill-posed) question in the study of random permutations is: 
\begin{equation*}
    \text{\emph{What does a typical large permutation drawn from a given model look like?}}
\end{equation*}
Traditionally, efforts to address this question have focused on the convergence of various permutation statistics, especially in the case of uniformly random permutations. Examples include the number of cycles, the number of inversions, and the length of the longest increasing subsequence, among many others.

In recent years, however, a more geometric perspective has gained interest. Rather than analyzing specific statistics, this approach aims to understand the permutation as a whole -- searching for a global or local description of its ``limiting shape''. This is the point of view adopted in the present work, where we will focus on the case of Luce permutations.

We first introduce the notion of global and local limits for permutations after some reminders on permutation patterns, then in the next subsection we give an overview of our main results.

\bigskip

\noindent\textbf{Notation for patterns in permutations.}
The occurrences of various patterns in a permutation are classical fare, both in applications and in probabilistic combinatorics. For example, the number of inversions, descents, and peaks has been extensively studied. More recently, pattern avoidance has become a hot topic~\cite{bona2008combinatorics22,kitaev2011patterns22,vatter2015permutation23}. We begin by introducing notation for general patterns.

Recall that we denote by $\mcl S_n$ the set of permutations of size $n\geq 1$ and set $\mcl S:=\cup_{n\geq 1}\mcl S_n$.
We sometimes write permutations of $\mcl S_n$ in one-line notation as $\sigma=\sigma(1)\sigma(2)\dots\sigma(n).$ %Sometimes, we view a permutation $\sigma$ of size $n$ as a set of points in $[n]\times[n]$ where the $i$-th column has exactly one point at height $\sigma(i)$. We call this representation the \textbf{diagram} of a permutation. For a permutation $\sigma\in\mcl S_n$ the \textbf{size} $n$ of $\sigma$ is denoted by $|\sigma|.$ Recall that $\mathcal{S}\coloneqq\bigcup_{n\in\mathbb{N}}\mcl S_n$ denotes the set of permutations of finite size. We write sequences of permutations in $\mathcal{S}$ as $(\sigma_n)_{n\in\mathbb{N}}.$ 
If $x_1,\dots ,x_n$ is a sequence of distinct numbers, let $\text{std}(x_1,\dots ,x_n)$ be the unique permutation $\pi$ in $\mcl S_n$ that is in the same relative order as $x_1,\dots ,x_n,$ i.e.\ $\pi(i)<\pi(j)$ if and only if $x_i<x_j.$
Given a permutation $\sigma\in\mcl S_n$ and a subset of indices $I\subset[n]$, let $\pat_I(\sigma)$ be the permutation induced by $(\sigma(i))_{i\in I},$ namely, $\pat_I(\sigma)\coloneqq\text{std}\big((\sigma(i))_{i\in I}\big),$ where the $\sigma(i)$ are listed in increasing value of the index.
For example, if $\sigma=87532461$ and $I=\{2,4,7\}$ then $\pat_{\{2,4,7\}}(87532461)=\text{std}(736)=312$. 

Given two permutations, $\sigma\in\mcl S_n$ and $\pi\in\mcl S_k$ for some $k\leq n,$ and a set of indices $I=\{i_1 < \ldots < i_k\}$, we say that $\sigma(i_1) \ldots \sigma(i_k)$ is an \textbf{occurrence} of $\pi$ in $\sigma$ if $\pat_I(\sigma)=\pi$ (we will also say that $\pi$ is a \textbf{pattern} of $\sigma$). If the indices $i_1, \ldots ,i_k$ form an interval, then we say that $\sigma(i_1) \ldots \sigma(i_k)$ is a \textbf{consecutive occurrence} of $\pi$ in $\sigma$ (we will also say that $\pi$ is a \textbf{consecutive pattern} of $\sigma$). 
We also denote intervals of integers as $[n,m]$ for some $n,m\in\BB Z_{>0}$ with $n\leq m$.
For example, the permutation $\sigma=1532467$ contains an occurrence (but no consecutive occurrence) of $1423$ and a consecutive occurrence of $321$. Indeed, $\pat_{\{1,2,3,5\}}(\sigma)=1423$ but no interval of indices of $\sigma$ induces the permutation $1423.$

We denote by $\occ(\pi,\sigma)$ the number of occurrences of $\pi$ in $\sigma$, that is,
\begin{equation}
	\occ(\pi,\sigma)\coloneqq\#\big\{I \subseteq [n] \big|\#I=k, \pat_I(\sigma)=\pi\big\},
\end{equation}
where $\#$ denotes the cardinality of a set.
Moreover, we denote by $\pocc(\pi,\sigma)$ the proportion of occurrences of $\pi$ in $\sigma,$ that is, $\pocc(\pi,\sigma)\coloneqq\occ(\pi,\sigma)/\binom{n}{k}$.
Similarly, we denote by $\cocc(\pi,\sigma)$ the number of consecutive occurrences of $\pi$ in $\sigma$, that is,
\begin{equation}
	\label{cocc}
	\cocc(\pi,\sigma)\coloneqq\#\Big\{I\subseteq[n]\Big|\#I=k,\;I\text{ is an interval, } \text{pat}_I(\sigma)=\pi\Big\},
\end{equation}
and we denote by $\pcocc(\pi,\sigma)$ the proportion of consecutive occurrences of $\pi$ in $\sigma,$ that is,\footnote{Another natural choice for the denominator would be $n-k+1$. Note that for every fixed $k$ there are no differences in the asymptotics when $n$ tends to infinity.} $\pcocc(\pi,\sigma)\coloneqq\cocc(\pi,\sigma)/n$.

\bigskip

\noindent\textbf{Permuton convergence.} A \textbf{permuton} is a Borel probability measure $\mu$ on the unit square $ [ 0,1 ] ^ 2$ with two uniform marginals, that is, $\mu([a,b]\times[0,1]) = \mu( [0,1] \times [a,b] )=b-a$ for all $0\le a < b\le 1$. Permutons are natural objects to describe the scaling limits of random permutations and have been studied quite intensively in the past decade; see~\cite[Section 2.1]{borga-thesis} or \cite[Section 2]{meliotsurvey} for an introduction to the theory of permutons and, in particular, to the notion of permuton limit.

For a permutation $\sigma\in\mcl{S}_n$, we define the \textbf{permuton associated with $\sigma$} to be the measure $\mu_\sigma$ on $[0,1]^2$ which is equal to $n $ times the Lebesgue measure on
\begin{equation}
\bigcup_{j =1}^{n} \left[ \frac{j-1}{n} , \frac{j}{n} \right] \times \left[ \frac{\sigma(j)-1}{n} , \frac{\sigma(j)}{n} \right] .
\end{equation}
% Let $\mathcal{M}$ be the set of permutons.
% We endow $\mathcal M$ with the weak topology.
% Then a sequence of permutons $(\mu_n)_{n\in\mathbb{N}}$ converges \textbf{weakly} to $\mu$, simply denoted $\mu_n \to \mu$, if 
%  We have the following interesting characterization of permuton convergence.

Let $\sigma_n$ be a random permutation of size $n$. 
	Moreover, for any fixed $k\in\mathbb{N}$, let ${ I}_{n,k}$ be a uniform random subset of $[n]$ with $k$ elements, independent of $ \sigma_n$. We recall (see for instance~\cite[Theorem 2.5]{bbfgp-universal}) that the following assertions are equivalent:
	\begin{enumerate}[(a)]
		\item  $(\mu_{ \sigma_n})_{n\in\mathbb{N}}$ converges in distribution w.r.t.\ the permuton topology to a random permuton $ \mu$, that is, 
        \[
\int_{[0,1]^2} f \, d\mu_{ \sigma_n} \rightarrow \int_{[0,1]^2} f \, d\mu,\]
for every (bounded and) continuous function $f: [0,1]^2 \to \mathbb{R}$.
		\item  The random infinite vector $\big(\pocc(\pi,\sigma_n)\big)_{\pi \in \mathcal{S}}$ converges in distribution w.r.t.\ the product topology to a random infinite vector $( \Lambda(\pi))_{\pi \in \mathcal{S}}$. 
		\item  For every $\pi$ in $\mathcal{S}$, there exists $\Delta_\pi \geq 0$ such that 
		$\mathbb E[\pocc(\pi, \sigma_n)] \xrightarrow{n\to\infty} \Delta_\pi.$
	\item  For every $k\in\mathbb{N}$, the sequence  $\big(\pat_{{ I}_{n,k}}(\sigma_n)\big)_{n\in\mathbb{N}}$ of random permutations converges in distribution to some random permutation $ \rho_k$.
	\end{enumerate}	
    Whenever these assertions are verified, we have for every $k\in\mathbb{N}$ and $\pi\in\mathcal S_k$,
	\[  \mathbb P(\rho_{k} = \pi) = \Delta_\pi = \mathbb E[ \Lambda(\pi)]. %= \mathbb E[\pocc(\pi,{\mu})] = \mathbb P(\Perm_k(\mu) = \pi). 
    \] 

    \bigskip

    The theory of permutons was first developed in~\cite{hoppen2013limits}. Permuton convergence has been studied in various models, including Erdős–Szekeres permutations~\cite{MR2266895}, Mallows permutations~\cite{Shannon}, exponential families on permutations~\cite{Mukherjee}, fixed pattern densities~\cite{Winkler} 
    %almost square permutations~\cite{borga2021almost}, 
    and many others. 
    For random pattern-avoiding permutations, the limiting permutons are often random~\cite{miner2014shape, bbfgp-universal,borga-skew-permuton}. In this paper, we further contribute by analyzing the case of Luce-distributed permutations.

\bigskip

\noindent\textbf{Local convergence.} We recall from \cite[Theorem 2.32]{borga2020local} that we say that a sequence of random permutations $(\sigma_n)_n$ (quenched) \textbf{Benjamini--Schramm converges} if one of the following equivalent conditions holds:
\begin{enumerate}[(a)]
		\item There exists a family of non-negative real random variables $(\Gamma^h_{\pi})_{h \geq 1,\pi\in\mathcal{S}_{2h+1}}$ such that for $i_n$ a uniform random index in $[n]$ independent of $\sigma_n$, then
		\[\Big(\BB{P}\big(\pat_{[i_n-h,i_n+h]\cap[1,n]}(\sigma_n)=\pi\,\big|\,\sigma_n\big)\Big)_{h \geq 1,\pi\in\mathcal{S}_{2h+1}}\xrightarrow[n\to\infty]{\mathrm{d}}\big(\Gamma^h_{\pi}\big)_{h \geq 1,\pi\in\mathcal{S}_{2h+1}},\]
		w.r.t.\ the product topology;
		\item There exists an infinite vector of non-negative real random variables $(\Lambda_{\pi})_{\pi\in\mathcal{S}}$ such that 
		\[\big(\pcocc(\pi,\sigma_n)\big)_{h \geq 1,\pi\in\mathcal{S}_{h}}\xrightarrow[n\to\infty]{\mathrm{d}}\big(\Lambda_{\pi}\big)_{h \geq 1,\pi\in\mathcal{S}_{h}}\] 
		w.r.t.\ the product topology.
\end{enumerate}
In particular, if one of the two conditions holds (and so do both of them), then 
\begin{equation*}
		\Lambda_{\pi}\stackrel{\mathrm{d}}{=}\Gamma^h_{\pi},\quad\text{for all}\quad \pi\in\mathcal{S}_{2h+1}\qquad\text{and}\qquad
		\Lambda_{\pi}\stackrel{\mathrm{d}}{=}\sum_{m=1}^{2h+1}\Gamma^h_{\pi^{*m}},\quad\text{for all}\quad \pi\in\mathcal{S}_{2h},
\end{equation*}
where $\pi^{*m}\coloneqq\text{std}(\pi_1,\dots,\pi_{k},m-1/2)$.

\bigskip

The theory of local convergence for permutations was introduced in~\cite{borga2020local} and subsequently investigated across several models, including those studied in~\cite{bevan2019permutations,MR4115736,borga2021asymptotic,bm-baxter-permutation,park2024galton}.

\subsection{An overview of our main results}

The principal contributions of this work are as follows:

\begin{itemize}
\item In \cref{thm:permuton_conv}, we prove a permuton limit result for Luce-distributed permutations (under minimal assumptions on their weights).
\item In \cref{Sect:prop-luce}, we investigate properties of the permuton limit of Luce permutations. In particular, in \cref{prop:density} we show that this permuton limit is absolutely continuous with respect to the uniform permuton $\Leb_{[0,1]^2}$, and we provide an explicit density in \eqref{eq:density}. In \cref{sect:patt-dens}, we present explicit computations of limiting pattern densities. Finally, in \cref{sect:diff-sim}, we discuss the differences and similarities between exact Luce-distributed permutations and permutations sampled from the corresponding Luce permuton limit.
\item \cref{sect:local} is dedicated to the study of the local limit of Luce permutations. In \cref{thm:lln_local}, we establish the quenched Benjamini--Schramm convergence of Luce-distributed permutations, while in \cref{thm:clt_local}, we prove a stronger result: a central limit theorem for the number of consecutive occurrences of patterns with a Berry--Esseen type error estimate. \cref{thm:general-local-thm} ensures that the above two results hold under the same minimal assumptions on the weights that we have for the permuton limit. Sections~\ref{sect:app1} and~\ref{sect:app2} provide applications of these results.

\item \cref{sect:clt-inv} proves a central limit theorem (\cref{them:clt-inv}) for the number of inversions in a Luce distributed permutation with a Berry--Esseen type error estimate.

\item We conclude the paper with comments and open questions in~\cref{sect:final}.
\end{itemize}

\paragraph{Acknowledgments.}

%We thank N anonymous referees for helpful comments on an earlier version of this paper. 
We thank  Victor Dubach and Sumit Mukherjee for some helpful discussions and for sharing their interest in this problem. 
J.B.\ was partially supported by NSF grant DMS-2441646. S.C.\ was partially supported by NSF grants DMS-2113242 and DMS-2153654. P.D.\ was partially supported by NSF grant DMS-1959042. 

\section{The permuton limit and its consequences}

This section is organized as follows. \cref{sect:perm-lim} establishes the permuton limit, \cref{Sect:prop-luce} studies its properties, \cref{sect:patt-dens} computes pattern densities, and \cref{sect:diff-sim} compares Luce permutations with their permuton limit.

\subsection{The permuton limit of Luce-distributed permutations}\label{sect:perm-lim}

\begin{thm}[Permuton limit]\label{thm:permuton_conv}
	Set $f_n(y):=\theta_{\lfloor yn \rfloor}$ for all $y\in[0,1]$ and assume that $f_n\to f$ almost everywhere for some positive, finite, measurable function $f$ on $[0,1]$.
	Let also $\mu$ be the law of the pair
	\begin{equation}\label{eq:limiting_rv}
            (X,Y):=\Big(U,F\big(E/f(U)\big)\Big),
        \end{equation}
	where $U$ is a uniform random variable on $[0,1]$, $E$ is an independent exponential random variable of parameter 1, and
	\begin{equation}\label{eq:defn-F}
		F(x):=1-\int_0^1\mathrm{e}^{-x f(y)}\,\dd y,\quad \text{for all $x\geq 0$.}
	\end{equation}
	If $\sigma_n\sim \Luce(\theta_1,\dots,\theta_n)$, then the sequence $(\sigma_n)_n$ converges in probability in the permuton sense to the deterministic permuton $\mu$.
\end{thm}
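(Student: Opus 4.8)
The plan is to realize $\sigma_n$ through the standard exponential-clock coupling and then to prove permuton convergence by matching the first two moments of randomly sampled points of the diagram. Let $(T_i)_{i\in[n]}$ be independent with $T_i\sim\mathrm{Exp}(\theta_i)$; by the competing-exponentials computation and the memoryless property, the rank order of $(T_i)$ has exactly the law of the draw order $\tau$ in the urn description of the Luce model, so that $\sigma_n(i):=\#\{j\in[n]:T_j\le T_i\}$ defines a permutation with law $\Luce(\theta_1,\dots,\theta_n)$. Writing $T_i=E_i/\theta_i$ with $(E_i)$ i.i.d.\ $\mathrm{Exp}(1)$, the key structural point is that, conditionally on $T_i$, the value $\sigma_n(i)/n$ is a normalized sum of independent Bernoulli variables and hence concentrates around its conditional mean.

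Since the bounded-Lipschitz distance between $\mu_{\sigma_n}$ and the empirical measure $\nu_n:=\tfrac1n\sum_{i=1}^n\delta_{(i/n,\,\sigma_n(i)/n)}$ is $O(1/n)$, and the permuton topology is metrized by testing against a fixed countable family of continuous functions, it suffices to show that $\int\phi\,d\nu_n\to\int\phi\,d\mu$ in probability for every $\phi$ of the product form $\phi(x,y)=g(x)h(y)$ with $g,h\in C([0,1])$ (these span a dense subalgebra of $C([0,1]^2)$). Because $\int\phi\,d\nu_n=\tfrac1n\sum_i g(i/n)h(\sigma_n(i)/n)=\mathbb{E}\big[g(i_n/n)h(\sigma_n(i_n)/n)\,\big|\,\sigma_n\big]$ for a uniform index $i_n$ independent of $\sigma_n$, this reduces to proving $\mathbb{E}\big[\int\phi\,d\nu_n\big]\to\int\phi\,d\mu$ together with $\Var\big(\int\phi\,d\nu_n\big)\to0$, i.e.\ convergence of the one-point function $\mathbb{E}[g(i_n/n)h(\sigma_n(i_n)/n)]$ and of the two-point function $\mathbb{E}[g(i_n/n)h(\sigma_n(i_n)/n)\,g(i_n'/n)h(\sigma_n(i_n')/n)]$ (with $i_n,i_n'$ independent uniform indices) to $\int\phi\,d\mu$ and $\big(\int\phi\,d\mu\big)^2$ respectively.

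For the one-point function, condition on $T_{i_n}=t$: then $\sigma_n(i_n)/n$ has conditional mean $\tfrac1n\sum_j(1-\mathrm e^{-\theta_j t})+O(1/n)$ and conditional variance $O(1/n)$, so Chebyshev and uniform continuity of $h$ give $\mathbb{E}[h(\sigma_n(i_n)/n)\mid T_{i_n}=t]=h(\widehat{F}_n(t))+o(1)$ uniformly in $t$ and $i_n$, where $\widehat{F}_n(t):=\tfrac1n\sum_j(1-\mathrm e^{-\theta_j t})$. By dominated convergence $\widehat{F}_n\to F$ pointwise, with $F$ as in~\eqref{eq:defn-F}, and since the $\widehat{F}_n$ are non-decreasing and $F$ is continuous with $F(0)=0$ and $F(+\infty)=1$ (here $f>0$ is used), this convergence is uniform on $[0,+\infty]$; hence $\mathbb{E}[h(\sigma_n(i_n)/n)\mid T_{i_n}=t]=h(F(t))+o(1)$ uniformly. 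Integrating over $T_{i_n}=E/\theta_{i_n}$, $E\sim\mathrm{Exp}(1)$ independent of $i_n$, reduces the one-point function to $\mathbb{E}[g(i_n/n)\,h(F(E/\theta_{i_n}))]+o(1)$; writing out this expectation as a Riemann-type average over $i$ and using $f_n\to f$ a.e.\ together with positivity and finiteness of $f$ and continuity of $F$, a further application of dominated convergence yields $\mathbb{E}[g(i_n/n)h(F(E/\theta_{i_n}))]\to\mathbb{E}[g(U)h(F(E/f(U)))]=\int\phi\,d\mu$, which is exactly $\phi$ integrated against the law of the pair $(X,Y)$ in~\eqref{eq:limiting_rv}. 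The two-point function is handled the same way: on the event $\{i_n\neq i_n'\}$ of probability $1-1/n$, one conditions on $(T_{i_n},T_{i_n'})$ and uses that each of $\sigma_n(i_n)/n$ and $\sigma_n(i_n')/n$ concentrates around $\widehat{F}_n$ evaluated at its own clock (only marginal concentration is needed, not any independence between the two), after which the same limiting argument, now with $(i_n,i_n')$ jointly uniform so that the relevant empirical measure factorizes, gives convergence to $\big(\int\phi\,d\mu\big)^2$. Combining the two moment estimates shows $\int\phi\,d\nu_n\to\int\phi\,d\mu$ in $L^2$, hence in probability, and the theorem follows.

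The main obstacle is the uniformity in the conditional concentration step: one needs $\mathbb{E}[h(\sigma_n(i)/n)\mid T_i=t]=h(F(t))+o(1)$ uniformly over both $i\in[n]$ and $t\in(0,+\infty)$. This rests on upgrading the pointwise limit $\widehat{F}_n\to F$ to uniform convergence on $[0,+\infty]$ via monotonicity of the $\widehat{F}_n$ and continuity of $F$ (a P\'olya-type argument), and on a variance bound for $\sigma_n(i)/n$ that is uniform in the conditioning value $t$, which is immediate from the Bernoulli-sum structure. Once this is secured, the remaining ingredients are routine: the reduction from $\mu_{\sigma_n}$ to sampled points is standard, and passing to the limit in the sampled expectations is a bare dominated-convergence computation using $f_n\to f$ a.e.\ and the continuity of $F$. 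A minor technical point is that the map $(x,e,w)\mapsto g(x)h(F(e/w))$ degenerates at $w=0$; this is harmless since $f$ is positive, so $f(U)>0$ almost surely and the limiting law avoids $\{w=0\}$.
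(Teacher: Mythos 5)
Your proposal is correct and follows essentially the same route as the paper: the exponential representation $\sigma_n(i)=\#\{j:T_j\le T_i\}$, conditional concentration of $\sigma_n(i)/n$ around $\widehat F_n(T_i)$ via the Bernoulli-sum structure, a second-moment (one-point/two-point) argument for sampled points, and dominated convergence using $f_n\to f$ a.e.\ to identify the limit. The only cosmetic differences are that you test against product functions $g(x)h(y)$ and upgrade $\widehat F_n\to F$ to uniform convergence on $[0,\infty]$ by monotonicity, whereas the paper works with general continuous test functions and applies dominated convergence twice directly; both are sound.
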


For example, if $\theta_i=1$ for all $n\geq 1$ and $i\in[n]$, then $f(x) = 1$ for all $x\in[0,1]$, and hence $Y = 1 - \mathrm{e}^{-E}$.
Thus, $(X, Y )$ is a pair of independent Uniform$([0, 1])$ random variables in this case, as expected.

On the other hand, for the Sukhatme weights, we have $\theta_{i}=(n-i+1)/n$. 
In this case, $f_n(x) \to f(x) = 1 - x$ for all $x \in [0, 1]$. Since $f(x) > 0$ for a.e.\ $x$, we can apply
the theorem and get that
\[
        Y=1-\int_0^1\mathrm{e}^{-(1-x)E/(1-X)}\,\dd x=1-\frac{1-\mathrm{e}^{-E/(1-X)}}{E/(1-X)},
\]
where $E$ is a standard exponential random variable, independent of $X$, and $X\sim\text{Uniform}([0, 1])$. Note the (a priori non-trivial) fact that $Y$ is Uniform$([0, 1])$, since $\mu$ is a permuton. See \cref{fig:mallows_luce} on page~\pageref{fig:mallows_luce} for a picture.

%For each $n$, let $\theta_{1}\ge\cdots\ge\theta_{n}>0$ be a set of weights for the Luce model. Define
%\[
%  f_n:[0,1]\to[0,\infty),\qquad f_n(x):=\begin{cases}
% \theta_{i}, & (i-1)/n\le x<i/n,\\
% \theta_{n}, & x=1.
% \end{cases}
% \]
% Assume there is a measurable function $f:[0,1]\to[0,\infty)$ such that $f_n(x)\to f(x)>0$ for a.e.
% $x\in[0,1]$.

\medskip

We now turn to the proof of the theorem.
Let $\sigma_n\in \mcl S_n$ be a permutation from the Luce model with weights $\theta_{1},\dots,\theta_{n}$. Let $(U^\ind_n)_{\ind\in[2]}$ be two i.i.d.\ uniform random variables on $[n]$  independent of $\sigma_n$, and define for  $\ind\in[2]=\{1,2\}$,
\[
 X^\ind_n:=\frac{U^\ind_n}{n},\qquad Y^\ind_n:=\frac{\sigma_n(U^\ind_n)}{n}.
\]

Then, to prove that $\sigma_n$ converges in probability to $\mu$ in the permuton sense, it is enough to show that 
\begin{equation}\label{eq:fddconv}
    \text{the vector $(X^\ind_n,Y^\ind_n)_{\ind\in[2]}$ converges in distribution to $(X^\ind,Y^\ind)_{\ind\in[2]}$,}
\end{equation}
where the $(X^1,Y^1)$ and $(X^2,Y^2)$ are i.i.d.\ copies of the random variable in~\eqref{eq:limiting_rv}. Indeed, thanks to \cite[Remark 1.5]{borga2024large}, to show that $\sigma_n$ converges in probability to $\mu$, it is enough to show that for every (bounded and) continuous function $f: [0,1]^2 \to \mathbb{R}$,
\begin{equation}\label{eq:fddconv2}
    P_n(f)\coloneqq\frac{1}{n}\sum_{i=1}^n f\left(\frac{i}{n},\frac{\sigma_n(i)}{n}\right)\xlongrightarrow[n\to\infty]{\mathbb{P}}\int_{[0,1]^2}f\,\,\mathrm{d}\mu=\mathbb{E}[f(X^1,Y^1)].
\end{equation}
But \eqref{eq:fddconv} immediately gives us that as $n$ tends to infinity,
\begin{equation}
    \mathbb{E}[P_n(f)]\to\mathbb{E}[f(X^1,Y^1)]
    \quad\text{and}\quad
    \mathbb{E}[P_n(f)^2]\to\mathbb{E}[f(X^1,Y^1)]^2
\end{equation}
and so a standard second moment argument shows that  \eqref{eq:fddconv} implies \eqref{eq:fddconv2}.

\begin{proof}[Proof of \cref{thm:permuton_conv}]
Let $E_{n,1},\dots,E_{n,n}$ be independent exponential r.v.'s with $\mathbb{E}[E_{n,i}]=1/\theta_{i}$. By the exponential representation of the Luce model (see for instance~\cite[Theorem 2.1]{chatterjee2023enumerative} and recall~\cref{rmk:inv}), we can express $\sigma_n(i)$ as
\[
 \sigma_n(i)=\sum_{j=1}^n\mathbf{1}\{E_{n,j}\le E_{n,i}\}.
\]
Note that for $j\neq i$,
\[
 \mathbb{P}(E_{n,j}\le E_{n,i}\mid E_{n,i})=1-\mathrm{e}^{-\theta_{j}E_{n,i}}.
\]
Consequently, conditioning on $E_{n,i}$, $\sigma_n(i)$ is 1 plus a sum over $j\in [n]\setminus\{i\}$ of independent Bernoulli random variables with
means $1-\mathrm{e}^{-\theta_{j}E_{n,i}}$. Thus, if we let
\[
 Z_{n,i}:=\frac{1}{n}+\frac{1}{n}\sum_{j\neq i}(1-\mathrm{e}^{-\theta_{j}E_{n,i}}),
\]
an application of Hoeffding's inequality gives us that for all $t\geq 0$,
\[
    \mathbb{P}\left(\left|\frac{\sigma_n(i)}{n}-Z_{n,i}\right|\geq t \,\,\middle|\,\, E_{n,i} \right)\leq 2\mathrm{e}^{-nt^2/2}.
\]
Since the bound does not depend on $E_{n,i}$, the unconditional probability is also bounded by the
same quantity. Taking a union bound over $i\in[n]$ and applying the Borel–Cantelli lemma, this
shows that almost surely as $n\to \infty$,
\[
 \max_{1\le i\le n}\left|\frac{\sigma_n(i)}{n}-Z_{n,i}\right|\to 0.
\]

Now let $W^\ind_n=Z_{n,U^\ind_n}$. By the above display,  the vector $(Y^\ind_n-W^\ind_n)_{\ind\in[2]}$ converges to the zero vector almost surely as $n\to \infty$.
Thus, it suffices to study the convergence of the vector
$(X^\ind_n,W^\ind_n)_{\ind\in[2]}$. Moreover, from now on, we can also assume that $U^1_n\neq U^2_n$. Indeed, the probability of the complement event tends to zero as $n\to \infty$ and so the limit (if it exists) of the vector
$(X^\ind_n,W^\ind_n)_{\ind\in[2]}$ under the conditional law that $U^1_n\neq U^2_n$ is the same as the one obtained under the unconditional law.

Note that, conditioning on  $U^\ind_n=i^{\ind}$ for $\ind\in[2]$ with $i^1\neq i^2$, the law of $(W_n^\ind)_{\ind\in[2]}$ is the same as that of
\[
(Q^{\ind}_{n,i^{\ind}})_{\ind\in[2]}=\left(\frac{1}{n}+\frac{1}{n}\sum_{j\neq i}(1-\mathrm{e}^{-\theta_{j}E^{i^{\ind}}/\theta_{i^{\ind}}})\right)_{\ind\in[2]}
\]
where the $(E^{i^{\ind}})_{\ind\in[2]}$ are independent exponential random variables with mean $1$, which are also independent of $(U^\ind_n)_{\ind\in[2]}$. Since
$X^{\ind}_n = U^\ind_n/n$, this shows that the joint law of $(X^\ind_n,W^\ind_n)_{\ind\in[2]}$ is the same as the joint law of $(X^\ind_n,Q^{\ind}_{n,U_n^\ind})_{\ind\in[2]}$. So, our task reduces to identifying the limiting distribution of $(X^\ind_n,Q^{\ind}_{n,U_n^\ind})_{\ind\in[2]}$. Since the random variables $(E^{i^{\ind}})_{\ind\in[2]}$ and $(U^\ind_n)_{\ind\in[2]}$ are all jointly independent, and we have the assumption that $U^1_n\neq U^2_n$, we have that $(X^1_n,Q^{1}_{n,U_n^1})$ and $(X^2_n,Q^{2}_{n,U_n^2})$ are independent. Therefore, recalling that the limiting random variables $(X^1,Y^1)$ and $(X^2,Y^2)$ are also independent, in order to conclude, it is enough to show that $(X_n,Q_{n,U_n})$ converges in distribution to $(X,Y)$, where $(X_n,Q_{n,U_n})$ is distributed as $(X^1_n,Q^1_{n,U^1_n})$ and $(X,Y)$ is distributed as $(X^1,Y^1)$.
Now, notice that since $f_n(x)=\theta_{\lfloor xn \rfloor}$,
\[
Q_{n,U_n}
=
\frac{1}{n}-\frac{1-\mathrm{e}^{-E}}{n}+\frac{1}{n}\sum_{j=1}^n(1-\mathrm{e}^{-\theta_{j}E^{i^{\ind}}/\theta_{U_n}})
=
\frac{\mathrm{e}^{-E}}{n}+\int_{0}^1(1-\mathrm{e}^{-f_n(x)E/\theta_{U_n}}) \,\,\mathrm{d}x,
\]
where $E$ is an exponential random variable with mean $1$, which is  independent of $U_n$.

Consequently, for any bounded continuous function $g : [0, 1]^2 \to \mathbb{R}^2$,
\begin{align*}
\mathbb{E}[g(X_n,Q_{n,U_n})]
&=
\mathbb{E}\left[g\left(X_n,\frac{\mathrm{e}^{-E}}{n}+\int_{0}^1(1-\mathrm{e}^{-f_n(x)E/\theta_{U_n}}) \,\,\mathrm{d}x\right)\right]\\
&=
\mathbb{E}\left[\frac{1}{n}\sum_{i=1}^n g\left(\frac{i}{n},\frac{\mathrm{e}^{-E}}{n}+\int_{0}^1(1-\mathrm{e}^{-f_n(x)E/\theta_{i}}) \,\,\mathrm{d}x\right)\right]\\
&=
\mathbb{E}\left[\int_{0}^1 g\left(h_n(y),\frac{\mathrm{e}^{-E}}{n}+\int_{0}^1(1-\mathrm{e}^{-f_n(x)E/f_n(y)}) \,\,\mathrm{d}x\right)\,\,\mathrm{d}y\right]\\
&=
\int_{0}^\infty\int_{0}^1 \mathrm{e}^{-z}g\left(h_n(y),\frac{\mathrm{e}^{-z}}{n}+\int_{0}^1(1-\mathrm{e}^{-f_n(x)z/f_n(y)}) \,\,\mathrm{d}x\right)\,\,\mathrm{d}y\,\mathrm{d}z,
\end{align*}
where $h_n(y) := \frac{\lfloor yn \rfloor}{n}$.
Now, given any $y$ such that $f_n(y) \to f(y) > 0$, and any $z > 0$, we have that for almost
every $x \in [0, 1]$,
\[
\lim_{n\to\infty}(1-\mathrm{e}^{-f_n(x)z/f_n(y)})=1-\mathrm{e}^{-f(x)z/f(y)}.
\]
Thus, by the dominated convergence theorem, we have that for any $y$ and $z$ as above,
\[
\lim_{n\to\infty}\int_{0}^1(1-\mathrm{e}^{-f_n(x)z/f_n(y)})=\int_{0}^1(1-\mathrm{e}^{-f(x)z/f(y)})\,\,\mathrm{d}x.
\]
But again, $f_n(y) \to f(y) > 0$ for almost every $y \in [0, 1]$. Thus, by the boundedness and
continuity of $g$, and the dominated convergence theorem,
\begin{align*}
\lim_{n\to\infty}\mathbb{E}[g(X_n,Q_{n,U_n})]
&=
\lim_{n\to\infty}\int_{0}^\infty\int_{0}^1 \mathrm{e}^{-z}g\left(h_n(y),\frac{\mathrm{e}^{-z}}{n}+\int_{0}^1(1-\mathrm{e}^{-f_n(x)z/f_n(y)}) \,\,\mathrm{d}x\right)\,\,\mathrm{d}y\,\mathrm{d}z\\
&=
\int_{0}^\infty\int_{0}^1 \mathrm{e}^{-z}g\left(y,\int_{0}^1(1-\mathrm{e}^{-f(x)z/f(y)}) \,\,\mathrm{d}x\right)\,\,\mathrm{d}y\,\mathrm{d}z.
\end{align*}
The last expression equals $\mathbb{E}[g(X, Y )]$, where $X$ and $Y$ are as in the statement of the theorem.
This completes the proof.
\end{proof}

\subsection{Properties of the Luce permuton}\label{Sect:prop-luce}

We now investigate some properties of the permuton $\mu$ in \cref{thm:permuton_conv}.

Recall that given a permuton $\mu$, one can sample $n$ independent points $Z_1, \dots , Z_n$ in the unit
square $[0, 1]^2$ according to $\mu$. These $n$ points induce a random permutation $\sigma$: for any $i,j\in[n]:=\{1,\dots,n\}$, let $\sigma(i) = j$ if the point with $i$-th lowest $x$-coordinate has $j$-th lowest $y$-coordinate (this is well-defined since the marginals of a permutons are uniform, and so, almost surely there are no points with the same $x$- or $y$-coordinates). We denote this permutation by $\Perm(\mu,n)$ and call it the \textbf{random permutation induced by the permuton $\mu$} of size $n$.

\begin{prop}\label{prop:density}
	The permuton $\mu$ in \cref{thm:permuton_conv} is uniquely determined by its pattern densities, defined for all $k\geq 1$ and $\pi\in\mathcal{S}_k$ by
	\begin{equation}\label{eq:patt_dens}
		\pocc(\pi,\mu):=\BB P\big(\Perm(\mu,k)=\pi\big)= \BB E\Big[\BB P\left(\Luce\big(f(U^1),\dots,f(U^k)\big)=\pi \,\,\Big| \,\,U^1,\dots,U^k\right)  \Big],
	\end{equation}
	where $(U^1,\dots,U^k)$ are the order statistics of $k$ i.i.d.\ uniform random variables in $[0,1]$.
 
	Moreover, the permuton $\mu$ is absolutely continuous w.r.t.\ the uniform permuton $\Leb_{[0,1]^2}$ and it has the following density:
	\begin{equation}\label{eq:density}
		\rho(x,y):=	\frac{f(x)\mathrm{e}^{-f(x)F^{-1}(y)}}{\int_0^1f(t)\mathrm{e}^{-f(t)F^{-1}(y)}\,\dd t}, \quad\text{for all $(x,y)\in(0,1)^2$},
	\end{equation}
    where $F$ is as in \eqref{eq:defn-F}.
\end{prop}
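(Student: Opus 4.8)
The plan is to establish the two assertions separately. For the first assertion --- that $\mu$ is determined by its pattern densities and that these equal the Luce-on-the-weights probabilities --- I would argue as follows. The permuton $\mu$ in \cref{thm:permuton_conv} is the a.s.\ permuton limit of $\sigma_n\sim\Luce(\theta_1,\dots,\theta_n)$, and by the general equivalence recalled in the introduction, any permuton limit is uniquely determined by the collection of limiting pattern densities $\Delta_\pi=\pocc(\pi,\mu)=\BB P(\Perm(\mu,k)=\pi)$. So the content is really the formula on the right-hand side of \eqref{eq:patt_dens}. To get it, I would take a uniform random $k$-subset $I_{n,k}=\{i_1<\dots<i_k\}$ of $[n]$, independent of $\sigma_n$, and compute the limit of $\BB P(\pat_{I_{n,k}}(\sigma_n)=\pi)$. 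Using the exponential representation from the proof of \cref{thm:permuton_conv} --- $\sigma_n(i)=\sum_j\mathbf 1\{E_{n,j}\le E_{n,i}\}$ with $E_{n,i}$ independent exponentials of rate $\theta_i$ --- the relative order of $\sigma_n(i_1),\dots,\sigma_n(i_k)$ is exactly the relative order of $E_{n,i_1},\dots,E_{n,i_k}$, \emph{up to} the contribution of the other $n-k$ clocks, which is negligible in the sense that the $\pat$ of the subsequence converges to $\text{std}$ of the subvector of the $E$'s. Wait — more carefully: $\pat_{I_{n,k}}(\sigma_n)$ is the standardization of $(\sigma_n(i_1),\dots,\sigma_n(i_k))$, and since $t\mapsto\#\{j:E_{n,j}\le t\}$ is increasing, this standardization equals $\text{std}(E_{n,i_1},\dots,E_{n,i_k})$ exactly (no approximation needed). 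Now $E_{n,i_\ell}$ is exponential of rate $\theta_{i_\ell}=f_n(i_\ell/n)$, and the normalized indices $(i_1/n,\dots,i_k/n)$ converge in distribution to the order statistics $(U^1,\dots,U^k)$ of $k$ i.i.d.\ uniforms; using $f_n\to f$ a.e.\ together with dominated convergence (as in the proof of \cref{thm:permuton_conv}), the rescaled weights converge to $(f(U^1),\dots,f(U^k))$. Finally, the standardization of independent exponentials with rates $\lambda_1,\dots,\lambda_k$ is precisely a $\Luce(\lambda_1,\dots,\lambda_k)$-distributed permutation (this is the exponential representation in small size, invoking \cref{rmk:inv} for the correct orientation), which yields \eqref{eq:patt_dens}. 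I expect the main technical care here is the interchange of limits and the measurability/integrability bookkeeping for the a.e.-convergent $f_n$, exactly parallel to the dominated-convergence steps already carried out in the proof of \cref{thm:permuton_conv}.

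For the second assertion --- absolute continuity and the explicit density \eqref{eq:density} --- I would work directly from the description \eqref{eq:limiting_rv} of $\mu$ as the law of $(X,Y)=(U,F(E/f(U)))$ with $U\sim\text{Uniform}([0,1])$, $E\sim\text{Exp}(1)$ independent, and $F(x)=1-\int_0^1 e^{-xf(y)}\,\dd y$. First I would note that $F$ is continuous and strictly increasing on $[0,\infty)$ with $F(0)=0$ and $F(\infty)=1$ (strict monotonicity because $x\mapsto e^{-xf(y)}$ is strictly decreasing for a.e.\ $y$ since $f>0$ a.e.), so $F^{-1}:[0,1)\to[0,\infty)$ is well-defined. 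Conditionally on $U=x$, the variable $Y=F(E/f(x))$ is a monotone transformation of the exponential $E$: for $y\in(0,1)$, $\BB P(Y\le y\mid U=x)=\BB P(E\le f(x)F^{-1}(y))=1-e^{-f(x)F^{-1}(y)}$. Differentiating in $y$ gives the conditional density of $Y$ given $U=x$, using the chain rule and $(F^{-1})'(y)=1/F'(F^{-1}(y))$ where $F'(u)=\int_0^1 f(t)e^{-uf(t)}\,\dd t$:
\[
\rho_{Y\mid U}(y\mid x)=f(x)e^{-f(x)F^{-1}(y)}\cdot\frac{(F^{-1})'(y)}{1}=\frac{f(x)e^{-f(x)F^{-1}(y)}}{\int_0^1 f(t)e^{-f(t)F^{-1}(y)}\,\dd t}.
\]
Since $U$ is uniform on $[0,1]$, the joint density of $(X,Y)$ with respect to $\Leb_{[0,1]^2}$ is $\rho(x,y)=\rho_{Y\mid U}(y\mid x)$, which is exactly \eqref{eq:density}; in particular $\mu\ll\Leb_{[0,1]^2}$. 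As a sanity check one can verify that $\int_0^1\rho(x,y)\,\dd x=1$ for every $y$, consistent with the uniform $y$-marginal of the permuton (this also re-derives the ``a priori non-trivial'' fact mentioned after \cref{thm:permuton_conv}).

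The only genuine subtlety I foresee is justifying the differentiability of $F$ (and hence of $F^{-1}$) and the validity of differentiating under the integral sign: since $f$ is merely positive, finite, and measurable, $\partial_u e^{-uf(t)}=-f(t)e^{-uf(t)}$ need not be dominated uniformly near $u=0$. I would handle this by restricting attention to $u$ in a compact subinterval $[\delta,M]\subset(0,\infty)$ --- equivalently $y$ in a compact subinterval of $(0,1)$ --- where $\sup_t f(t)e^{-\delta f(t)}<\infty$ provides the needed domination, then note that \eqref{eq:density} is asserted only for $(x,y)\in(0,1)^2$ and that $\{0,1\}\times[0,1]$ and $[0,1]\times\{0,1\}$ are $\Leb_{[0,1]^2}$-null, so this suffices. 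A cleaner alternative, which avoids differentiating $F$ altogether, is to apply the change-of-variables formula to the smooth bijection $(u,e)\mapsto(u,F(e/f(u)))$ on $(0,1)\times(0,\infty)$ directly and read off the Jacobian; I would present whichever is shorter. Everything else is a routine computation.
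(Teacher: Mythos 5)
Your proof is correct. For the second assertion (absolute continuity and the density \eqref{eq:density}) you do essentially what the paper does: the paper computes $\BB E[g(U,F(E/f(U)))]$ for a test function $g$ and performs the change of variables $y=F(s/f(x))$, $\dd s=f(x)(F^{-1})'(y)\,\dd y$, which is the same calculation as your differentiation of the conditional CDF $\BB P(Y\le y\mid U=x)=1-\mathrm{e}^{-f(x)F^{-1}(y)}$; your remark on justifying differentiation under the integral sign via the bound $\sup_{s>0}s\,\mathrm{e}^{-\delta s}=1/(e\delta)$ is a point the paper passes over silently, and is a welcome extra care. For the first assertion, however, you take a genuinely different route. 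The paper never returns to the finite-$n$ model: it samples $k$ points directly from the explicit representation of $\mu$ as the law of $\big(U,F(E/f(U))\big)$, observes that $\Perm(\mu,k)=\pi$ is the event $F\big(E_1/f(U^{\pi^{-1}(1)})\big)<\dots<F\big(E_k/f(U^{\pi^{-1}(k)})\big)$, strips off the strictly increasing $F$, and then recognizes — conditionally on the $U$'s — the exponential representation of $\Luce\big(f(U^1),\dots,f(U^k)\big)$ via \cite[Theorem 2.1]{chatterjee2023enumerative}. This is short and requires no further limiting argument. You instead compute $\lim_n\BB P\big(\pat_{I_{n,k}}(\sigma_n)=\pi\big)$ from the exponential clocks of the finite model and identify the limit with $\BB P(\Perm(\mu,k)=\pi)$ through the general equivalence of \cite[Theorem 2.5]{bbfgp-universal}. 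That is valid (your observation that $\pat_{I_{n,k}}(\sigma_n)=\mathrm{std}(E_{n,i_1},\dots,E_{n,i_k})$ exactly is the right one), but it forces you to redo an a.e.-convergence and dominated-convergence argument along random indices that essentially duplicates work already done in the proof of \cref{thm:permuton_conv}, whereas the paper's direct sampling from $\mu$ avoids any second passage to the limit. Both routes yield the same formula; the paper's is the more economical one given that the explicit law of $\mu$ is already in hand, while yours has the mild advantage of not relying on that explicit description beyond what \cref{thm:permuton_conv} asserts.
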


Before proving the proposition above, we provide some remarks and examples.

\medskip

We start by noting that the first part of the proposition statement heuristically says that the random permutation induced by the Luce permuton $\mu$ of size $k$ is distributed as the average of an exact Luce-distributed permutation with \emph{random} weights $f(U^1),\dots,f(U^k)$.

\medskip

If $f(x)=1-x$, i.e.\ $\mu$ is the permuton limit of Luce-distributed permutations with Sukhatme weights $\theta_i=n-i+1$, then $F(x)=\frac{x-1 + \mathrm{e}^{-x} }{x}$. Hence, denoting by $\varphi(x)$ the inverse of $\frac{\mathrm{e}^{-x}+x-1}{x}$ (which does not have an explicit closed form but can be easily computed numerically), we get that the density of $\mu$ is given by
\begin{equation}\label{eq:suk-dens}
	\rho(x,y):=\frac{(1-x)\mathrm{e}^{-(1-x)\varphi(y)}}{\int_0^1(1-t)\mathrm{e}^{-(1-t)\varphi(y)}\,\dd t}
	%=(1-x)\mathrm{e}^{-(1-x)\varphi(y)} \frac{\varphi(y)^2}{\mathrm{e}^{-\varphi(y)}(\mathrm{e}^{\varphi(y)}-\varphi(y)-1)} 
	=
	 \frac{(1-x)\varphi(y)^2\mathrm{e}^{x\varphi(y)}}{\mathrm{e}^{\varphi(y)}-\varphi(y)-1}, \quad\text{for all $(x,y)\in(0,1)^2$}.
\end{equation}
The graph of the density $\rho(x,y)$ is plotted in the second panel of \cref{fig:mallows_luce}. 

We highlight two important features of the Luce permuton $\mu$ with Sukhatme weights: 
\begin{itemize}
    \item Its density $\rho(x,y)$ is singular at $(x,y)=(1,1)$. Indeed, noting that $\frac{x-1 + \mathrm{e}^{-x} }{x}= 1-1/x+o(1/x)$, we get $\varphi(x)\sim_{x\to 1} 1/(1-x)$ and so $\lim_{x\to 1^{-}}\rho(x,x)=+\infty$. We note that this singular behavior is different from the behavior of other well-studied limiting permutons: the Mallows permutons $(\mu_{\beta})_{\beta>0}$, i.e.\ the permuton limits of Mallows distributed permutations\footnote{We recall that in the Mallows model on permutations, the probability of a permutation is proportional to a real parameter $q$ raised to the power of the number of inversions of the permutation.} with parameter $q=1-\beta/n$ \cite{starr2009thermodynamic}. Indeed, it is simple to see that the densities 
\begin{equation}
    \rho_{\beta}(x,y)=\frac{\beta/2 \sinh[\beta/2]}{\left(\mathrm{e}^{\beta/4} \cosh[\beta/2 (x - y)] - 
    \mathrm{e}^{-\beta/4} \cosh[\beta/2 (x + y - 1)]\right)^2},
\end{equation}
of the Mallows permutons $(\mu_{\beta})_{\beta>0}$ are bounded for all $\beta>0$.

\item The density $\rho_{\beta}(x,y)$ of the Mallows permuton is symmetric with respect to the main diagonal of the unit square. In contrast, the density $\rho(x,y)$ of the Luce permuton with Sukhatme weights is asymmetric.
\end{itemize}

In \cref{fig:mallows_luce}, we compared the density of the permuton $\mu$ corresponding to Luce-distributed permutations with Sukhatme weights and the Mallows permuton $\mu_\beta$ with $\beta=6$.

\begin{figure}[h!]
    \centering
    \includegraphics[width=0.24\textwidth]{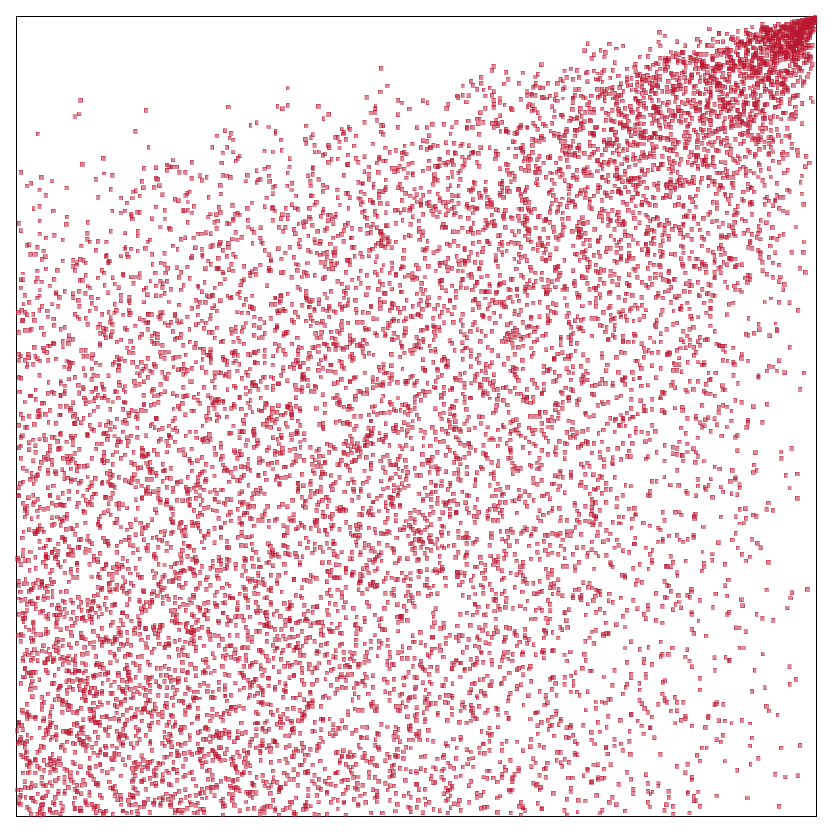}
    \includegraphics[width=0.24\textwidth]{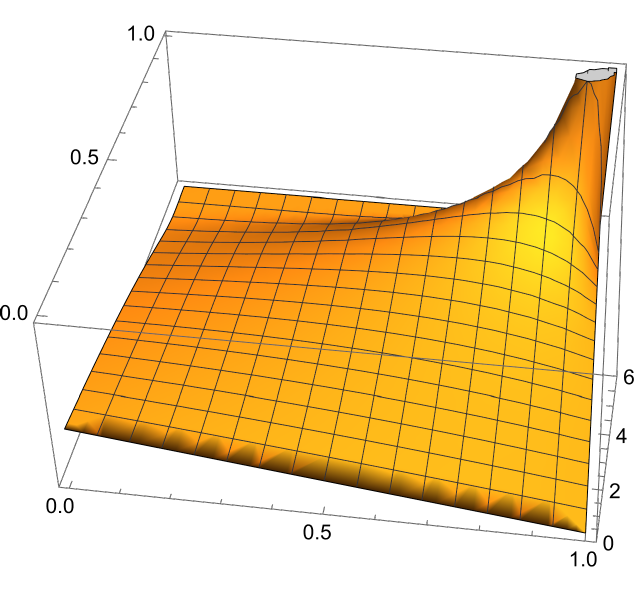}
    \includegraphics[width=0.24\textwidth]{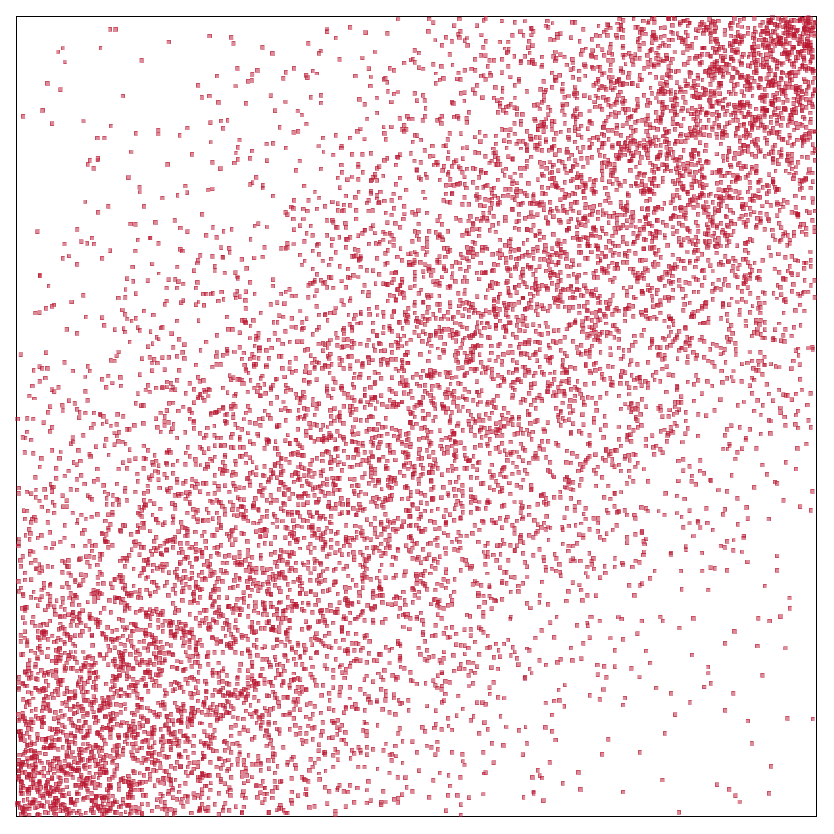}
    \includegraphics[width=0.24\textwidth]{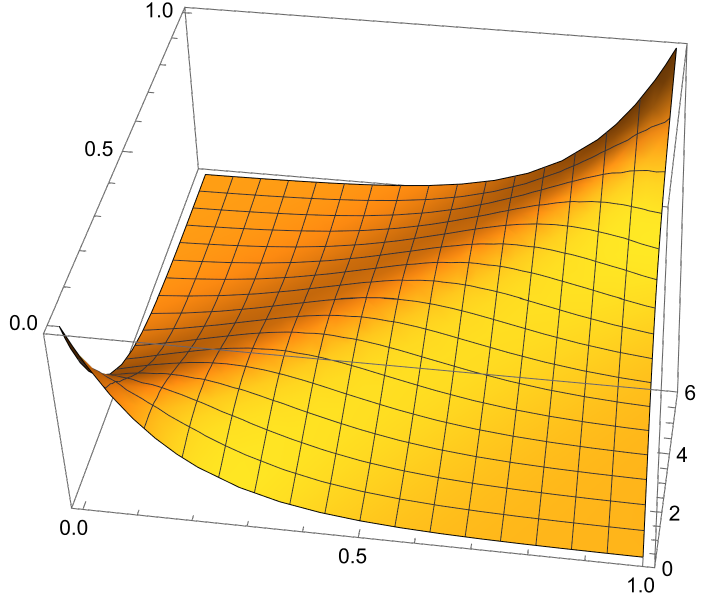}
    \caption{\textbf{From left to right:} (1) The diagram of a Luce-distributed permutation with Sukhatme weights of size $10000$; (2) The density $\rho(x,y)$ of the permuton limit of Luce-distributed permutation with Sukhatme weights; (3) The diagram of a Mallows distributed permutation with parameter $q=(1-6/n)$ of size $n=10000$; (4) The density $\rho_6(x,y)$ of the permuton limit $\mu_6$ of Mallows distributed permutation with parameter $q=(1-6/n)$.}
    \label{fig:mallows_luce}
\end{figure}

We now turn to the proof of \cref{prop:density}.

\begin{proof}[Proof of \cref{prop:density}]
	We first prove the second claim in the proposition statement.
	Recalling from \cref{thm:permuton_conv} that $F(x)=1-\int_0^1\mathrm{e}^{-x f(y)}dy$, we get that for all $x>0$,
	\[
	F'(x)=\int_0^1f(y)\mathrm{e}^{-x f(y)}\,\dd y>0.
	\] 
    Therefore, $F$ is a (strictly) increasing diffeomorphism from the positive real line to $(0,1)$. In particular, $F$ is invertible.
	
	Let $g$ be a non-negative measurable function defined on the unit square $[0,1]^2$. We have that
	\begin{equation*}
		\BB E \left[g(U,F(E/f(U)))\right]
		=
		\int_0^1 \int_0^\infty g(x,F(s/f(x)))\mathrm{e}^{-s}\,\dd s\,\dd x.
	\end{equation*}
	Using the change of variables $y=F(s/f(x))$ and $s=f(x)F^{-1}(y)$, and noting that
	\begin{equation*}
		\dd s= f(x) (F^{-1})'(y)\, \dd y=\frac{f(x)}{\int_0^1f(t)\mathrm{e}^{-f(t)F^{-1}(y)}\,\dd t}\,\dd y,
	\end{equation*}
	we conclude that
	\begin{equation*}
		\BB E \left[g(U,F(E/f(U)))\right]
		=
		\int_0^1 \int_0^1 g(x,y)\mathrm{e}^{-f(x)F^{-1}(y)}
		\frac{f(x)}{\int_0^1f(t)\mathrm{e}^{-f(t)F^{-1}(y)}\,\dd t}\,\dd y\,\dd x,
	\end{equation*}
	finishing the proof of the second part of the proposition.
	
	\medskip
	
	We now prove the first part of the proposition. Fix $k\geq 1$ and $\pi\in\mathcal{S}_k$. We first compute $\pocc(\pi,\mu):=\BB P\big(\Perm(\mu,k)=\pi\big)$. By definition of $\mu$ and $\Perm(\mu,k)$ we have that
	\begin{multline*}
	    \BB P\big(\Perm(\mu,k)=\pi\big)\\
        =\BB P\Big(F\big(E_1/f(U^{\pi^{-1}(1)})\big)<F\big(E_2/f(U^{\pi^{-1}(2)})\big)<\dots<F\big(E_k/f(U^{\pi^{-1}(k)})\big)\Big),
	\end{multline*}
	where $(E_i)_i$ are independent exponential random variables of parameter 1, and  $(U^1,\dots,U^k)$ are the order statistics of $k$ i.i.d.\ uniform random variables in $[0,1]$ also independent of $(E_i)_i$.
	Since $F$ is a (strictly) increasing diffeomorphism (as shown above), the previous expression rewrites as
	\[
	\BB P\big(\Perm(\mu,k)=\pi\big)=\BB P\Big(E_1/f(U^{\pi^{-1}(1)})<E_2/f(U^{\pi^{-1}(2)})<\dots<E_k/f(U^{\pi^{-1}(k)})\Big).
	\]
	Conditioning on $(U^1,\dots,U^k)$, the random variables $\left(E_i/f(U^{\pi^{-1}(i)})\right)_i$ are independent exponential random variables of parameters $\left(f(U^{\pi^{-1}(i)})\right)_i$, respectively. Hence, \eqref{eq:patt_dens} immediately follows from \cite[Theorem 2.1]{chatterjee2023enumerative}.
	
	Finally, the fact that the pattern densities $(\pocc(\pi,\mu))_{k\geq 1,\pi\in\mathcal{S}_k}$ uniquely determine the permuton $\mu$ is a consequence of the fact that $\Perm(\mu,k)$ converges in probability to $\mu$ in the permuton sense (see, for instance, \cite[Lemma 4.2]{hoppen2013limits}). We conclude by the uniqueness of the limit.
\end{proof}

\subsection{Convergence for the proportion of pattern occurrences}\label{sect:patt-dens}

\bigskip

Thanks to \cite[Theorem 2.5]{bbfgp-universal}, an immediate consequence of the permuton convergence in \cref{thm:permuton_conv} is the following (joint) law of large number for the number of occurrences of any pattern in Luce-distributed permutations.

\begin{cor}
	Assume that $\sigma_n\sim \Luce(\theta_1,\dots,\theta_n)$ and that it satisfies the assumption of \cref{thm:permuton_conv}.
	The random infinite vector $\big(\pocc(\pi,\sigma_n)\big)_{k\geq 1,\pi \in \mcl S_k}$ converges in probability w.r.t.\ the product topology to the infinite vector $(\pocc(\pi,\mu))_{k\geq 1,\pi \in \mcl S_k}$, where $\pocc(\pi,\mu)$ was introduced in \eqref{eq:patt_dens}.
\end{cor}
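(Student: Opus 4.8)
The plan is to derive this corollary directly from the equivalences between permuton convergence and convergence of pattern proportions recalled in the introduction (as in \cite[Theorem 2.5]{bbfgp-universal}), exploiting crucially that the limiting permuton $\mu$ of \cref{thm:permuton_conv} is \emph{deterministic}, which allows one to upgrade convergence in distribution to convergence in probability.

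First, I would recall that \cref{thm:permuton_conv} already gives $\mu_{\sigma_n}\to\mu$ \emph{in probability} with respect to the permuton topology, hence in particular in distribution. Applying the implication $(a)\Rightarrow(b)$ of \cite[Theorem 2.5]{bbfgp-universal}, the vector $\big(\pocc(\pi,\sigma_n)\big)_{k\ge 1,\pi\in\mcl S_k}$ converges in distribution, with respect to the product topology, to a vector $(\Lambda(\pi))_{\pi\in\mcl S}$; and since $\mu$ is deterministic, this limiting vector is deterministic and, by the last display of that theorem, equals $(\Delta_\pi)_{\pi\in\mcl S}$, which for a deterministic permuton is exactly $\big(\pocc(\pi,\mu)\big)_{k\ge 1,\pi\in\mcl S_k}$ with $\pocc(\pi,\mu)=\BB P(\Perm(\mu,k)=\pi)$ as introduced in \eqref{eq:patt_dens}. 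Thus $\big(\pocc(\pi,\sigma_n)\big)$ converges in distribution, in the product topology, to the deterministic vector $\big(\pocc(\pi,\mu)\big)$.

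To conclude, I would upgrade this to convergence in probability. Since $\mcl S$ is countable, the product space $\bbR^{\mcl S}$ (with the product topology) is metrizable, and convergence in distribution to a deterministic point of a metric space coincides with convergence in probability to that point, which is precisely the assertion of the corollary. (If one prefers a componentwise argument: a deterministic estimate of the form $|\pocc(\pi,\sigma_n)-\pocc(\pi,\mu_{\sigma_n})|\to 0$ for each fixed $k$, see \cite{hoppen2013limits}, together with the continuity of $\nu\mapsto\pocc(\pi,\nu)$ on the space of permutons and \cref{thm:permuton_conv}, gives $\pocc(\pi,\sigma_n)\to\pocc(\pi,\mu)$ in probability for each fixed $\pi$, and joint convergence in probability in the product topology then follows from countability of $\mcl S$.) The only point requiring care is exactly this passage from distributional to in-probability convergence, which is legitimate here precisely because the limiting permuton — and hence the limiting vector of pattern densities — is deterministic; there is no other genuine obstacle.
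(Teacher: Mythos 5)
Your proposal is correct and takes essentially the same route as the paper, which states the corollary without proof as ``an immediate consequence'' of \cite[Theorem 2.5]{bbfgp-universal} combined with \cref{thm:permuton_conv}. You simply make explicit the two details the paper leaves implicit --- that the deterministic limit permuton lets you upgrade distributional convergence of the pattern-proportion vector to convergence in probability, and that the limiting vector is identified with $\big(\pocc(\pi,\mu)\big)$ as in \eqref{eq:patt_dens} --- both of which are handled correctly.
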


In the next table, we explicitly compute the values of $\pocc(\pi,\mu)$ for patterns of size $k=2,3$, in the case of $f(x)=1-x$ (i.e.\ when $\mu$ is the permuton limit of Luce-distributed permutations with Sukhatme weights $\theta_i=n-i+1$) using the formula
\begin{multline*}
	\pocc(\pi,\mu)= \BB E\Big[\BB P\left(\Luce\big(f(U^1),\dots,f(U^k)\big)=\pi \Big| U^1,\dots,U^k\right)  \Big]\\
	=k!\int_{0<u_1<\dots<u_k<1}p\left(1-u_{\pi^{-1}(1)},\dots,1-u_{\pi^{-1}(k)}\right) \,\dd u_1 \dots \dd u_k,
\end{multline*}
where we recall that the function $p$ was introduced in \eqref{eq:p-function}. As seen, the chances are far from uniform. This is in contrast with the case of local patterns: As we will show in~\cref{the:unif-luce}, local patterns for Luce-distribued permutations with Sukhatme weights $\theta_i=n-i+1$ behave uniformly.

%In \cref{sect:local}, we will also look at the counterpart of consecutive patterns, which are connected with a notion of local convergence for permutations.

\medskip

\begin{center}
	\begin{tabular}{ |c|c|c| }
		\hline
		\multicolumn{3}{|c|}{Values of $\pocc(\pi,\mu)$ for the Luce permuton with Sukhatme weights} \\
		\hline
		$\pi$ & Exact values for $\pocc(\pi,\mu)$ & Numerical values for $\pocc(\pi,\mu)$\\
		\hline
		12   & $\log(2)$    & 0.69315 \\
		21&   $1-\log(2)$  & 0.30685   \\
		123 & $\frac 1 4 \left(2-\log(27/16)\right)$ & 0.36919\\
		213& $\log(256/27)-2$  & 0.24934\\
		132    & $6\left(\frac{5\log(3)}{8}-\frac{1}{12}-\frac{5\log(2)}{6}\right)$ & 0.15406\\
		231&   $2-\log(27/4)$  & 0.09046\\
		312& $6\left(\frac{1}{4}-\frac{4\log(2)}{3}-\frac{5\log(3)}{8}\right)$  & 0.07462\\
		321& $\frac 1 4 \left(\log(256/27)-2\right)$& 0.06234\\
		\hline
	\end{tabular}
\end{center}

\subsection{Differences between exact Luce-distributed permutations and permutations sampled from the corresponding Luce permuton}\label{sect:diff-sim}

The permuton limit of a sequence of permutations encodes well the ``global properties'' of the sequence of permutations, such as the proportion of patterns, as we saw in the previous section. On the other hand, the permuton limit (a priori) does not encode finer properties of the sequence of permutations, such as the distribution of the first or the last values of the permutations.

Hence, a natural question is to study how a sequence of exactly distributed Luce permutations behaves differently than the permutations sampled for their limiting permuton. More precisely, here we investigate the following question: in \cite[Theorem 4.2]{chatterjee2023enumerative}, the limiting distribution for the position of the $k$ smallest values of a Luce distributed permutation was determined.\footnote{With their definition of Luce distributed permutations (recall \cref{rmk:inv}) this is the limiting distribution for the top $k$ cards.} More precisely, it was proved that for any $k\geq 1$ and any distinct positive integers $a_1,\dots,a_k$, (recall \cref{rmk:inv})

\begin{multline}\label{eq:last_val}
     \BB P (\sigma_n(a_1)=1,\dots, \sigma_n(a_k)=k)\\
     =\int_{x_1>x_2>\dots>x_k>0}\prod_{j=1}^k\left(\theta_{a_j}\mathrm{e}^{-\theta_{a_j}x_j}\right)\prod_{i\in[n]\setminus\{a_1,\dots,a_k\}}\left(1-\mathrm{e}^{-\theta_i x_k}\right) \, \dd x_1\dots\dd x_k,
\end{multline}
so that $\lim_{n\to\infty} \BB P (\sigma_n(a_1)=1,\dots, \sigma_n(a_k)=k)$ is equal to
\begin{equation*}
    \int_{x_1>x_2>\dots>x_k>0}\prod_{j=1}^k\left(\theta_{a_j}\mathrm{e}^{-\theta_{a_j}x_j}\right)\prod_{i\notin\{a_1,\dots,a_k\}}\left(1-\mathrm{e}^{-\theta_i x_k}\right) \, \dd x_1\dots\dd x_k.
\end{equation*}
What about the analog question when $\sigma_n$ is replaced by $\pi_n=\Perm(\mu,n)$, where $\mu$ is the permuton limit of $\sigma_n$ (if it exists)?

\begin{prop}\label{prop:last_val_perm}
    Let $\mu$ be the Luce permuton corresponding to a positive, finite, measurable function $f$ on $[0,1]$, as defined in \cref{thm:permuton_conv}. Set $\pi_n=\Perm(\mu,n)$. Then
    \begin{multline*}
        \BB P \left(\pi_n(a_1)=1,\dots, \pi_n(a_k)=k\right)\\
        =\BB E\left[\int_{x_1>x_2>\dots>x_k>0}\prod_{j=1}^k\left(f(U^{a_j})\mathrm{e}^{-f(U^{a_j})x_j}\right)\prod_{i\in[n]\setminus\{a_1,\dots,a_k\}}\left(1-\mathrm{e}^{-f(U^{i}) x_k}\right) \, \dd x_1\dots\dd x_k\right],
    \end{multline*}
    where $(U^1,\dots,U^n)$ are the order statistics of $n$ i.i.d.\ uniform random variables in $[0,1]$.
\end{prop}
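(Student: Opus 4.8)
The plan is to realise $\pi_n=\Perm(\mu,n)$ through the representation of $\mu$ given in \cref{thm:permuton_conv} and then to reduce to the exact–Luce identity \eqref{eq:last_val}. Recall that $\mu$ is the law of $\big(U,F(E/f(U))\big)$, with $U$ uniform on $[0,1]$, $E$ an independent exponential of parameter $1$, and $F$ the strictly increasing diffeomorphism in \eqref{eq:defn-F} (its monotonicity was checked in the proof of \cref{prop:density}). Hence one may generate $\pi_n$ by sampling $2n$ mutually independent random variables $(U_\ell)_{\ell\in[n]},(E_\ell)_{\ell\in[n]}$ (with $U_\ell$ uniform and $E_\ell$ exponential of parameter $1$) and forming the point cloud $\big(U_\ell,F(E_\ell/f(U_\ell))\big)_{\ell\in[n]}$ in $[0,1]^2$.

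The key step is to condition on $(U_1,\dots,U_n)$. Conditionally, the ordering of the first coordinates is deterministic: writing $U^1<\dots<U^n$ for the order statistics and $\rho$ for the permutation with $U_{\rho(i)}=U^i$, the point with the $i$-th smallest first coordinate has second coordinate $F\big(E_{\rho(i)}/f(U^i)\big)$, and the $E_{\rho(i)}$ remain independent exponentials of parameter $1$ (since $\rho$ is a function of the conditioned variables only, and the $E_\ell$ are i.i.d.\ and independent of the $U_\ell$). Since $F$ is strictly increasing, $\pi_n(i)$ equals the rank of $E_{\rho(i)}/f(U^i)$ among $\{E_{\rho(m)}/f(U^m)\}_{m\in[n]}$, and these ratios are independent exponentials of rates $f(U^1),\dots,f(U^n)$. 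By the exponential representation of the Luce model (the one used in the proof of \cref{prop:density}, see also \cite[Theorem 2.1]{chatterjee2023enumerative}), this says exactly that, conditionally on $(U_1,\dots,U_n)$, the permutation $\pi_n$ is $\Luce\big(f(U^1),\dots,f(U^n)\big)$-distributed.

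It then remains to apply \eqref{eq:last_val} --- which holds for each fixed $n$ --- with the weights $\theta_i:=f(U^i)$, which gives a closed form for $\BB P\big(\pi_n(a_1)=1,\dots,\pi_n(a_k)=k\mid U_1,\dots,U_n\big)$, and then to take the expectation; since this conditional probability depends on $(U_1,\dots,U_n)$ only through the order statistics $(U^1,\dots,U^n)$, and these are the order statistics of $n$ i.i.d.\ uniform random variables on $[0,1]$, the claimed identity follows.

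I expect the only genuinely non-routine point to be the conditioning step, i.e.\ the identification of the conditional law of $\pi_n$ as $\Luce\big(f(U^1),\dots,f(U^n)\big)$. This is the same mechanism behind the first part of \cref{prop:density}, stated there for the induced permutation of size $k$; the reasoning applies verbatim for size $n$, the two ingredients being the strict monotonicity of $F$ (which lets one discard $F$ and work with the ratios $E/f(U)$) and the fact that conditioning on the $U_\ell$ turns those ratios into independent exponentials with the prescribed rates. Everything after that is a direct substitution into \eqref{eq:last_val}.
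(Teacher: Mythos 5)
Your proposal is correct and follows essentially the same route as the paper: the paper's (one-line) proof likewise combines the exact-Luce formula \eqref{eq:last_val} with the fact, established in \cref{prop:density}, that conditionally on the order statistics $(U^1,\dots,U^n)$ the permutation $\Perm(\mu,n)$ is $\Luce\big(f(U^1),\dots,f(U^n)\big)$-distributed. You merely spell out the conditioning/monotonicity argument behind that characterization rather than citing it, which is fine.
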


\begin{proof}
    This is a simple consequence of the formula in \eqref{eq:last_val} and the characterization of  $\Perm(\mu,n)$ given in \cref{prop:density}.
\end{proof}

If $\sigma_n\sim \Luce(\theta_1,\dots,\theta_n)$ and $\mu$ is the corresponding Luce permuton limit, then one could naively guess that since $f=\lim_{n\to\infty}\theta_{\lfloor yn \rfloor}/w_n$ for a.e.\ $y\in[0,1)$ then
    \begin{multline}\label{eq:false_approx}
        \BB E\left[\int_{x_1>x_2>\dots>x_k>0}\prod_{j=1}^k\left(f(U^{a_j})\mathrm{e}^{-f(U^{a_j})x_j}\right)\prod_{i\in[n]\setminus\{a_1,\dots,a_k\}}\left(1-\mathrm{e}^{-f(U^{i}) x_k}\right) \, \dd x_1\dots\dd x_k\right]\\
        =
        \BB E\left[\int_{x_1>x_2>\dots>x_k>0}\prod_{j=1}^k\left(\frac{f(U^{a_j})}{f(U^{1})}\mathrm{e}^{-\frac{f(U^{a_j})}{f(U^{1})}x_j}\right)\prod_{i\in[n]\setminus\{a_1,\dots,a_k\}}\left(1-\mathrm{e}^{-\frac{f(U^{i})}{f(U^{1})} x_k}\right) \, \dd x_1\dots\dd x_k\right]
        \approx\\
        \int_{x_1>x_2>\dots>x_k>0}\prod_{j=1}^k\left(\theta_{a_j}\mathrm{e}^{-\theta_{a_j}x_j}\right)\prod_{i\notin\{a_1,\dots,a_k\}}\left(1-\mathrm{e}^{-\theta_i x_k}\right) \, \dd x_1\dots\dd x_k.
    \end{multline}
    This turns out to be false! Indeed, if for instance $\theta_i=i$, so that $f(x)=x$, then 
    \begin{equation*}
        \frac{f(U^{a_j})}{f(U^{1})}=\frac{U^{a_j}}{U^{1}}\stackrel{d}{=}\frac{E_1+\dots+E_{a_j}}{E_1},
    \end{equation*}
    where we used that $(U^1, \dots, U^n)$ is equal in distribution to 
    \[\left(\frac{E_1}{\sum_{i=1}^{n+1} E_i}, \frac{E_1+E_2}{\sum_{i=1}^{n+1} E_i}, \dots, \frac{E_1+\dots+E_n}{\sum_{i=1}^{n+1} E_i}\right),\] 
    with $(E_i)_i$ i.i.d.\ exponential random variables of parameter 1. And so, $\frac{f(U^{a_j})}{f(U^{1})}$ does not concentrate around $a_j$.

    \medskip

    Given this observation, we wondered how different the expressions on the left-hand side and right-hand side of \eqref{eq:false_approx} are. We looked, for instance, at the case of $\BB P(\sigma_n(1)=1)$ and $\BB P(\Perm(\mu,n)(1)=1)$ when $\theta_i=i$ and $f(x)=x$. The results are shown in the next table.

    \medskip

\begin{center}
	\begin{tabular}{ |c|c|c| }
		\hline
		$n$ & $\BB P(\Perm(\mu,n)(1)=1)$ & $\BB P(\sigma_n(1)=1)$\\
		\hline
		10   &  0.4641 & 0.5184 \\
        20   & 0.5049  & 0.5162 \\
        50   & 0.5339 &  0.5161 \\
        100   &  0.5443 & 0.5161 \\
        1000   & 0.5540  & 0.5161 \\
        10000   & 0.5551  & 0.5161 \\
		\hline
	\end{tabular}
\end{center}

We note that the results are different but roughly similar.

\section{Local limits and a CLT for consecutive occurrences}\label{sect:local}

In the previous sections, we looked at the permuton limit for Luce-distributed permutations. Here, we focus on the natural counterpart of local limits, following the framework introduced in \cite{borga2020local}. These are extensions of the classical descents.

\subsection{Statement of the main results}

For the Luce model, the quenched Benjamini--Schramm convergence follows from the following result, whose proof is postponed to \cref{sect:proofs-local}.

\begin{thm}[Local limit]\label{thm:lln_local}
	Fix $k\geq 1$ and $\pi\in\mcl S_k$. Assume that $\sigma_n\sim \Luce(\theta_1,\dots,\theta_n)$ and that
	\begin{equation}\label{eq:ass-clt-cons}
		\Lambda(\pi):=\lim_{n\to +\infty}\frac{1}{n}\sum_{i=0}^{n-k}p\left(\theta_{i+\pi^{-1}(1)},\dots, \theta_{i+\pi^{-1}(k)}\right)
	\end{equation}
	exists. 
	Then, the following convergence in probability holds
	\begin{equation*}
		\pcocc(\pi,\sigma_n)\to\Lambda(\pi).
	\end{equation*}
\end{thm}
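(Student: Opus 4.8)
The plan is to exploit the exponential representation of the Luce model already used in the proof of \cref{thm:permuton_conv}, which turns a consecutive pattern into a function of a \emph{block} of $k$ consecutive, independent exponential random variables. This makes the sequence of pattern indicators finitely dependent, and the statement then follows from a routine second moment computation.

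Concretely, I would let $E_{n,1},\dots,E_{n,n}$ be independent exponentials with $\BB E[E_{n,i}]=1/\theta_i$ and use $\sigma_n(i)=\sum_{j=1}^n\mathbf 1\{E_{n,j}\le E_{n,i}\}$ (see \cite[Theorem 2.1]{chatterjee2023enumerative} and \cref{rmk:inv}). For $0\le i\le n-k$ set $X_i:=\mathbf 1\{\pat_{[i+1,i+k]}(\sigma_n)=\pi\}$, so that $\cocc(\pi,\sigma_n)=\sum_{i=0}^{n-k}X_i$. Since the $E_{n,j}$ are almost surely distinct, $\sigma_n(a)<\sigma_n(b)\iff E_{n,a}<E_{n,b}$, hence $\pat_{[i+1,i+k]}(\sigma_n)=\text{std}(E_{n,i+1},\dots,E_{n,i+k})$ and $X_i$ depends only on the block $(E_{n,i+1},\dots,E_{n,i+k})$. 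Now the event $\{X_i=1\}$ is precisely $\{E_{n,i+\pi^{-1}(1)}<\dots<E_{n,i+\pi^{-1}(k)}\}$, and by memorylessness of the exponential (equivalently \cite[Theorem 2.1]{chatterjee2023enumerative} again) this ordering of independent exponentials of rates $\theta_{i+\pi^{-1}(1)},\dots,\theta_{i+\pi^{-1}(k)}$ has probability $p(\theta_{i+\pi^{-1}(1)},\dots,\theta_{i+\pi^{-1}(k)})$. Summing over $i$ gives $\BB E[\pcocc(\pi,\sigma_n)]=\frac1n\sum_{i=0}^{n-k}p(\theta_{i+\pi^{-1}(1)},\dots,\theta_{i+\pi^{-1}(k)})\to\Lambda(\pi)$ by the hypothesis \eqref{eq:ass-clt-cons}.

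Next I would bound the variance. Because $X_i$ is a function of $(E_{n,i+1},\dots,E_{n,i+k})$ and the $E_{n,j}$ are independent, $X_i\perp X_j$ whenever $|i-j|\ge k$, so $\Var(\cocc(\pi,\sigma_n))=\sum_{i,j}\mathrm{Cov}(X_i,X_j)$ only picks up terms with $|i-j|<k$; for those one has the crude bound $|\mathrm{Cov}(X_i,X_j)|\le \tfrac14$ since the $X_i$ are $\{0,1\}$-valued. As there are at most $(2k-1)(n-k+1)$ such pairs, $\Var(\cocc(\pi,\sigma_n))\le \tfrac14(2k-1)(n-k+1)$, and therefore $\Var(\pcocc(\pi,\sigma_n))=n^{-2}\Var(\cocc(\pi,\sigma_n))\le \tfrac{2k-1}{4n}\to 0$ for fixed $k$. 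Combined with the first moment computation, Chebyshev's inequality yields $\pcocc(\pi,\sigma_n)\to\Lambda(\pi)$ in $L^2$, hence in probability.

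I do not expect a genuine obstacle: the argument is the standard second moment method, and the only points requiring care are the index bookkeeping relating $\pat_{[i+1,i+k]}(\sigma_n)$ to the order statistics of $E_{n,i+1},\dots,E_{n,i+k}$, together with the (clean but essential) identification of the resulting probability with the Luce weight $p(\theta_{i+\pi^{-1}(1)},\dots,\theta_{i+\pi^{-1}(k)})$ that appears in \eqref{eq:ass-clt-cons}. I would also point out that, in contrast with \cref{thm:permuton_conv}, this statement requires no convergence assumption on $f_n(y)=\theta_{\lfloor yn\rfloor}$ — only existence of the Cesàro-type limit in \eqref{eq:ass-clt-cons}. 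The genuinely harder result is \cref{thm:clt_local}: upgrading this to a central limit theorem with a Berry--Esseen rate will require a real dependency-graph / Stein's method argument in place of the bare variance bound above.
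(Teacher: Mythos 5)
Your proposal is correct and follows essentially the same route as the paper's proof: the exponential representation turns $\cocc(\pi,\sigma_n)$ into a sum of block indicators $\mathbf 1\{E_{i+\pi^{-1}(1)}<\dots<E_{i+\pi^{-1}(k)}\}$ that are independent once $|i-j|\ge k$, the expectation converges to $\Lambda(\pi)$ by hypothesis, and an $O(n)$ variance bound plus Chebyshev finishes the argument. The only cosmetic difference is that the paper writes out the full covariance decomposition (since it is reused verbatim in the proof of the CLT, \cref{thm:clt_local}) whereas you use the crude bound $|\mathrm{Cov}(X_i,X_j)|\le \tfrac14$, which is perfectly adequate here.
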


The assumption in \eqref{eq:ass-clt-cons} is a regularity assumption for the weights $\theta_1,\dots,\theta_n$ that will be verified in \cref{thm:general-local-thm} below under a minimal assumption. For instance, if $\pi=21$, i.e.\ the case of descents, then \eqref{eq:ass-clt-cons} requires the existence of a finite limit for:
\begin{equation*}
		\frac{1}{n}\sum_{i=0}^{n-2}\frac{\theta_{i+2}}{\theta_{i+1}+\theta_{i+2}}.
\end{equation*}

We also have the following stronger result (whose proof is also postponed to the end of the section). It gives a central limit theorem for these local statistics with a Berry--Esseen type error estimate.

\begin{thm}\label{thm:clt_local}
	Fix $k\geq 1$ and $\pi\in\mcl S_k$. Assume that $\sigma_n\sim \Luce(\theta_1,\dots,\theta_n)$ and let
	\begin{align}\label{eq:defn-variance}
		(\nu_{n}(\pi))^2:=&\Var\left(\cocc(\pi,\sigma_n)\right)\notag\\
  =&\sum_{i=0}^{n-k}\left(p\left(\theta_{i+\pi^{-1}(1)},\dots, \theta_{i+\pi^{-1}(k)}\right)-p\left(\theta_{i+\pi^{-1}(1)},\dots, \theta_{i+\pi^{-1}(k)}\right)^2\right)\notag\\
		&+
		2\sum_{h=1}^{k-1}\sum_{i=0}^{n-k-h}\Bigg(  \BB P\left(E_{i+\pi^{-1}(1)}<\dots<E_{i+\pi^{-1}(k)}, E_{i+h+\pi^{-1}(1)}<\dots<E_{i+h+\pi^{-1}(k)}\right)\notag\\
		&-p\left(\theta_{i+\pi^{-1}(1)},\dots, \theta_{i+\pi^{-1}(k)}\right)p\left(\theta_{i+h+\pi^{-1}(1)},\dots, \theta_{i+h+\pi^{-1}(k)}\right)\Bigg),
	\end{align}
	where $(E_i)_i$ are independent exponential random variables of parameter $(\theta_{i})_i$ respectively.
	Then, the following holds
	\begin{equation*}
		\left|\BB P\left(\frac{\cocc(\pi,\sigma_n)-\BB E\left[\cocc(\pi,\sigma_n)\right]}{\nu_{n}(\pi)}\leq w\right)-\Phi(w)\right|\leq 32(1+\sqrt 6)\frac{nk^2}{(\nu_{n}(\pi))^{3}},
	\end{equation*}
    where $\Phi(w)$ denotes the cumulative distribution function of the standard normal distribution.
\end{thm}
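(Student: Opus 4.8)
The plan is to recognize $\cocc(\pi,\sigma_n)$ as a sum of indicator random variables that are ``locally dependent'' and then apply a Stein's-method Berry--Esseen bound for sums of random variables with bounded dependency neighborhoods. Concretely, using the exponential representation of the Luce model (as in the proof of \cref{thm:permuton_conv}, with $E_1,\dots,E_n$ independent exponentials of parameters $\theta_1,\dots,\theta_n$), we have $\sigma_n(i)=\sum_j\mathbf 1\{E_j\le E_i\}$, so that the event ``$\sigma_n(i_1)\dots\sigma_n(i_k)$ is a consecutive occurrence of $\pi$ at position $i$'' depends only on the relative order of $E_{i+1},\dots,E_{i+k}$. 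Writing $\xi_i:=\mathbf 1\{\pat_{[i+1,i+k]}(\sigma_n)=\pi\}=\mathbf 1\{E_{i+\pi^{-1}(1)}<\dots<E_{i+\pi^{-1}(k)}\}$ for $i=0,\dots,n-k$, we get $\cocc(\pi,\sigma_n)=\sum_{i=0}^{n-k}\xi_i$, and $\xi_i$ is a deterministic function of $(E_{i+1},\dots,E_{i+k})$. Hence $\xi_i$ and $\xi_j$ are independent whenever $|i-j|\ge k$, so each $\xi_i$ has a dependency neighborhood of size at most $2k-1$.

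The main step is then to invoke a quantitative CLT for sums with bounded dependency graphs. The cleanest tool is the Stein-method bound (see e.g.\ Chen--Goldstein--Shao, or Ross's survey ``Fundamentals of Stein's method''): if $W=\sum_{i\in V}\xi_i$ with $\bbE[W]=0$ (after centering), $\Var(W)=1$, each $|\xi_i|\le B$, and each $\xi_i$ depends on at most $D$ others, then $\sup_w|\bbP(W\le w)-\Phi(w)|\le C(D^2 B^3 |V| + \text{similar terms})$. Applying this with centered indicators $\bar\xi_i=\xi_i-\bbE[\xi_i]$, which satisfy $|\bar\xi_i|\le 1$, dependency degree $D\le 2k-1\le 2k$, and $|V|=n-k+1\le n$, and then rescaling by $\nu_n(\pi)=\sqrt{\Var(\cocc(\pi,\sigma_n))}$, produces a bound of the form $C\,n(2k)^2/\nu_n(\pi)^3$. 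Tracking the explicit constant from the chosen reference (the factor $32(1+\sqrt6)$ strongly suggests the specific local-dependence Berry--Esseen theorem of Chen--Shao, which carries exactly that numerical constant) gives the stated inequality $32(1+\sqrt6)\,nk^2/\nu_n(\pi)^3$.

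It remains only to justify the closed form for $\nu_n(\pi)^2=\Var(\cocc(\pi,\sigma_n))$ asserted in \eqref{eq:defn-variance}. This is a direct bilinear expansion: $\Var\big(\sum_i\xi_i\big)=\sum_i\Var(\xi_i)+2\sum_{i<j}\Cov(\xi_i,\xi_j)$. Since $\xi_i$ is a Bernoulli variable with mean $p(\theta_{i+\pi^{-1}(1)},\dots,\theta_{i+\pi^{-1}(k)})$ (by \cite[Theorem 2.1]{chatterjee2023enumerative} applied to the exponentials, exactly as in the proof of \cref{prop:density}), $\Var(\xi_i)=p-p^2$ with $p=p(\theta_{i+\pi^{-1}(1)},\dots,\theta_{i+\pi^{-1}(k)})$; and $\Cov(\xi_i,\xi_j)=0$ unless $j-i=h$ with $1\le h\le k-1$, in which case it equals $\bbP(E_{i+\pi^{-1}(1)}<\dots<E_{i+\pi^{-1}(k)},\,E_{i+h+\pi^{-1}(1)}<\dots<E_{i+h+\pi^{-1}(k)})$ minus the product of the two means. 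Reindexing the double sum over $(i,h)$ gives precisely \eqref{eq:defn-variance}.

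The hard part is not any of the algebra but making sure the hypotheses of the chosen local-dependence Berry--Esseen theorem are met \emph{verbatim}, including the mild requirements on the variance normalization (some versions require $\Var(W)=1$ exactly, handled by the rescaling, and some involve a maximal correlation or third-moment condition that is automatic here because the summands are bounded indicators), and then matching the universal constant in that theorem to the constant $32(1+\sqrt6)$ appearing in the statement. Once the right theorem is cited, the proof is essentially a substitution of $B=1$, $D=2k-1$, $|V|=n-k+1$ and a clean-up of the bound to the stated form.
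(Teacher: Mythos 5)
Your proposal follows essentially the same route as the paper: write $\cocc(\pi,\sigma_n)$ as a sum of indicators $\mathds{1}_{\{E_{i+\pi^{-1}(1)}<\dots<E_{i+\pi^{-1}(k)}\}}$ via the exponential representation, obtain \eqref{eq:defn-variance} by the bilinear covariance expansion (the paper reuses the computation from the proof of \cref{thm:lln_local}), and invoke a dependency-graph Berry--Esseen bound. The only difference is the citation: the constant $32(1+\sqrt 6)$ comes from Baldi--Rinott \cite[Corollary 2]{baldi1989normal}, which is what the paper applies, rather than Chen--Shao.
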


Note that when $\pi=21$, i.e.\ the case of descents, then, setting $\widetilde{\theta}_i=\frac{\theta_{i+2}}{\theta_{i+1}+\theta_{i+2}}$, we have that
	\begin{align*}
		(\nu_{n}(21))^2
  =\sum_{i=0}^{n-2}\left(\widetilde{\theta}_i-\left( \widetilde{\theta}_i \right)^2\right)
		+
		2\sum_{i=0}^{n-3}\Bigg(  \BB P\left(E_{i+3}<E_{i+2}<E_{i+1}\right)
		-\widetilde{\theta}_i \cdot \widetilde{\theta}_{i+1}\Bigg).
	\end{align*}

%Recall the definitions of $\Lambda(\pi)$ from \eqref{eq:ass-clt-cons}  in \cref{thm:lln_local} and of $(\nu_{n}(\pi))^2$ from \eqref{eq:defn-variance} in \cref{thm:clt_local}.

It is natural to wonder when the Assumption~\eqref{eq:ass-clt-cons} in \cref{thm:lln_local} holds. The next theorem shows that such assumption holds\footnote{See \cref{sect:app2} for another setting where this assumption still holds.} as soon as the assumtion of \cref{thm:permuton_conv} is satisfied, i.e.\ the one in our permuton limit result. It also explicitly compute the asymptotics for $(\nu_{n}(\pi))^2$ from \cref{thm:clt_local}.

\newcommand{\R}{\mathbb{R}}
\newcommand{\1}{\mathbf{1}}

\begin{thm}\label{thm:general-local-thm}
Set $f_n(y):=\theta_{\lfloor yn \rfloor}$ for all $y\in[0,1]$ and assume that $f_n\to f$ almost everywhere for some positive, finite, measurable function $f$ on $[0,1]$. 
Then, for all $k\geq 1$ and every permutation $\pi\in\mcl{S}_k$,
\begin{equation}\label{eq:lim-exp-loc}
    \Lambda(\pi)=\lim_{n\to +\infty}\frac{1}{n}\sum_{i=0}^{n-k}p\left(\theta_{i+\pi^{-1}(1)},\dots, \theta_{i+\pi^{-1}(k)}\right)=\frac{1}{k!},
\end{equation}
and 
\begin{equation}\label{eq:lim-var-loc}
    \lim_{n\to\infty}\frac{(\nu_n(\pi))^2}{n}=\nu_\infty(\pi),
\end{equation}
where 
\begin{equation}\label{eq:nu-pi}
\nu_\infty(\pi)
=\frac{1}{k!}+\frac{1-2k}{(k!)^2}+2 \sum_{h=1}^{k-1}\zeta_{\pi}(h,k),
\end{equation}
with
\begin{equation}\label{eq:wefvbiwebfow}
    \zeta_{\pi}(h,k):=\mathbb P\!\left(
E_{\pi^{-1}(1)}<\dots<E_{\pi^{-1}(k)},
\;\; E_{h+\pi^{-1}(1)}<\dots<E_{h+\pi^{-1}(k)}
\right),\quad\text{$1\le h\le k-1$,}
\end{equation}
where $(E_{j})_{j}$ are i.i.d.\ exponential random variables with parameter 1.
\end{thm}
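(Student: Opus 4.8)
The plan is to reduce both claims to the elementary fact that $p$ from \eqref{eq:p-function}, and the joint probabilities entering \eqref{eq:defn-variance}, degenerate to explicit constants when the relevant weights coincide, and then to argue that the weights \emph{are} asymptotically constant over short windows, for all but a vanishing fraction of positions. Concretely, write $p_i:=p\big(\theta_{i+\pi^{-1}(1)},\dots,\theta_{i+\pi^{-1}(k)}\big)$, and for $1\le h\le k-1$ set $p_i^{(h)}:=p\big(\theta_{i+h+\pi^{-1}(1)},\dots,\theta_{i+h+\pi^{-1}(k)}\big)$ and $q_{i,h}:=\BB P\big(E_{i+\pi^{-1}(1)}<\dots<E_{i+\pi^{-1}(k)},\ E_{i+h+\pi^{-1}(1)}<\dots<E_{i+h+\pi^{-1}(k)}\big)$ with $(E_j)_j$ independent and $E_j\sim\mathrm{Exp}(\theta_j)$. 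Each of $p_i,p_i^{(h)},q_{i,h}$ is (i) a function of the weights in a window $\theta_{i+1},\dots,\theta_{i+m}$ of fixed length $m\le 2k-1$; (ii) invariant under rescaling all those weights by a common positive factor; (iii) jointly continuous in those weights on $(0,\infty)^m$ — immediate for $p_i$ from \eqref{eq:p-function}, and for $q_{i,h}$ by dominated convergence, since the relevant density $\prod_j\theta_j\mathrm{e}^{-\theta_j x_j}$ is dominated on any box $[\alpha,\beta]^m$ by an integrable function; and (iv) equal, when the window weights all coincide, to $1/k!$ for $p_i$ and $p_i^{(h)}$ (by \eqref{eq:p-function}) and to $\zeta_\pi(h,k)$ for $q_{i,h}$ — for the latter, write $E_j=G_j/\theta_j$ with $(G_j)_j$ i.i.d.\ $\mathrm{Exp}(1)$, cancel the common factor, and relabel indices to recover exactly \eqref{eq:wefvbiwebfow}. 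By (ii) and (iii), if every pairwise ratio within a window lies in $[1-\eta,1+\eta]$, then $p_i,p_i^{(h)}$ are within $\omega(\eta)$ of $1/k!$ and $q_{i,h}$ within $\omega(\eta)$ of $\zeta_\pi(h,k)$, for a modulus $\omega$ with $\omega(\eta)\to0$.

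It therefore remains to prove the key lemma: \emph{under the hypothesis $f_n\to f$ a.e.\ with $f$ positive and finite, for every fixed $m\in\BB N$ and every $\eta>0$,}
\[
\frac1n\,\#\Big\{\,0\le i\le n-m\ :\ \max_{1\le a,b\le m}\Big|\tfrac{\theta_{i+a}}{\theta_{i+b}}-1\Big|\ge\eta\,\Big\}\ \xrightarrow{n\to\infty}\ 0.
\]
Here I would use that $\theta_{\lfloor yn\rfloor+c}=f_n(y+c/n)$ (after the harmless convention $\theta_{n+1}=\dots=\theta_{n+2k}:=\theta_n$, which touches only $O(1)$ positions) together with the $L^0$-continuity of translation: for any measurable $g$ on $[0,1]$, $\Leb\{y:|g(y+t)-g(y)|\ge\eta\}\to0$ as $t\to0$ — a standard consequence of Lusin's theorem and uniform continuity on a large compact set. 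Applying this to $f$ with $t=c/n$, combining with $f_n\to f$ in measure (which holds since $f_n\to f$ a.e.\ on the finite measure space $[0,1]$ and $f$ is a.e.\ finite) and the triangle inequality, one gets $|\theta_{i+c}-\theta_i|<\eta$ for all but $o(n)$ indices $i$; since in addition $f$ is a.e.\ positive and finite, for any $\delta>0$ the weights $\theta_i$ lie in a fixed compact subinterval of $(0,\infty)$ for all but a $\delta$-fraction of the indices once $n$ is large, and these two facts together bound the windowed ratios, giving the lemma after letting $\delta\to0$. I expect this lemma to be the main obstacle: a.e.\ convergence of the shifted step functions $f_n(\cdot+c/n)$ need \emph{not} hold in general, so one really must work at the level of convergence in measure, which is exactly strong enough for the Cesàro averages that follow.

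Granting the lemma, \eqref{eq:lim-exp-loc} follows by splitting $\frac1n\sum_{i=0}^{n-k}p_i$ into the ``good'' positions $i$ — those whose window $\theta_{i+1},\dots,\theta_{i+k}$ has all pairwise ratios in $[1-\eta,1+\eta]$ — and the remaining $o(n)$ ``bad'' positions: on good positions $|p_i-1/k!|\le\omega(\eta)$, while $0\le p_i\le1$ always, so $\limsup_n\big|\frac1n\sum_{i=0}^{n-k}p_i-\frac1{k!}\big|\le\omega(\eta)$, and $\eta\to0$ gives $\Lambda(\pi)=1/k!$. (As a sanity check, $\sum_{\pi\in\mcl S_k}p_i=1$ for each $i$, being the total Luce mass on $\mcl S_k$ with weights $\theta_{i+1},\dots,\theta_{i+k}$, consistently with $k!\cdot\tfrac1{k!}=1$.) Identically, $\frac1n\sum_{i=0}^{n-k}p_i^2\to1/(k!)^2$.

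Finally, for \eqref{eq:lim-var-loc} one divides \eqref{eq:defn-variance} by $n$ and passes to the limit term by term by the same good/bad splitting, now with windows of length $k+h\le 2k-1$. The diagonal contribution is $\frac1n\sum_{i=0}^{n-k}(p_i-p_i^2)\to\frac1{k!}-\frac1{(k!)^2}$; for each fixed $h\in\{1,\dots,k-1\}$, property (iv) gives $\frac1n\sum_i q_{i,h}\to\zeta_\pi(h,k)$ and $\frac1n\sum_i p_i p_i^{(h)}\to\frac1{(k!)^2}$. Summing over the $k-1$ values of $h$,
\[
\nu_\infty(\pi)=\frac1{k!}-\frac1{(k!)^2}+2\sum_{h=1}^{k-1}\Big(\zeta_\pi(h,k)-\frac1{(k!)^2}\Big)=\frac1{k!}+\frac{1-2k}{(k!)^2}+2\sum_{h=1}^{k-1}\zeta_\pi(h,k),
\]
which is precisely \eqref{eq:nu-pi}, establishing both \eqref{eq:lim-exp-loc} and \eqref{eq:lim-var-loc}.
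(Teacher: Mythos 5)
Your proposal is correct, and while it shares the same overall skeleton as the paper's proof (split indices into ``good'' windows where the weights are nearly constant and a sparse set of ``bad'' ones; use scale invariance plus the values $\Phi_\pi(c,\dots,c)=1/k!$ and $J^h_\pi(c,\dots,c)=\zeta_\pi(h,k)$ at constant arguments; then average), the technical core is genuinely different. The paper builds, via Lusin \emph{and} Egorov, a single measurable set $K$ on which $f$ is uniformly continuous and $f_n\to f$ uniformly, declares a window good when all its grid points lie in $K$, counts bad windows by inner regularity (approximating $K$ by a finite union of intervals), and then needs Lipschitz bounds for $\Phi_\pi$ and $J^h_\pi$ on compact rectangles $[1/(2m),2m]^{\,\cdot}$ — the latter requiring the explicit $C^1$ estimate obtained by differentiating under the integral sign. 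Your route instead isolates a cleaner quantitative lemma — that for all but $o(n)$ indices the pairwise ratios within a window of fixed length are within $\eta$ of $1$ — proved by combining convergence in measure of $f_n$ with continuity of translation in measure for $f$ (itself a Lusin consequence), plus the observation that the weights lie in a fixed compact subinterval of $(0,\infty)$ off a set of small density. This buys you two simplifications: you never need uniform convergence (Egorov) or the inner-regularity bookkeeping, and mere continuity of $\Phi_\pi$ and $J^h_\pi$ at the diagonal point $(1,\dots,1)$ suffices in place of Lipschitz control on rectangles. You also correctly flag the one genuine trap, namely that a.e.\ convergence of the shifted step functions $f_n(\cdot+c/n)$ can fail, which forces the argument to run at the level of convergence in measure; this is exactly strong enough for the Ces\`aro averages in \eqref{eq:lim-exp-loc} and \eqref{eq:lim-var-loc}, and your final bookkeeping recovering $\nu_\infty(\pi)=\tfrac{1}{k!}+\tfrac{1-2k}{(k!)^2}+2\sum_{h=1}^{k-1}\zeta_\pi(h,k)$ matches \eqref{eq:nu-pi}.
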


Consequently, thanks to \cref{thm:lln_local} and \cref{thm:clt_local}, under the assumption of \cref{thm:general-local-thm} on the weights $\theta_1,\dots,\theta_n$, if $\sigma_n\sim \Luce(\theta_1,\dots,\theta_n)$, then it behaves locally as a uniform permutation, in the sense of quenched Benjamini–Schramm convergence, Moreover,
\begin{equation}\label{eq:clt-unif-luce}
		\sqrt{n} \,\frac{\pcocc(\pi,\sigma_n)-\frac{1}{k!}}{\sqrt{\nu_\infty(\pi)}}\xrightarrow{\mathrm{d}}\mcl{N}(0,1),
\end{equation}
where $\nu_\infty(\pi)$ is defined as in \eqref{eq:nu-pi}.

Note that these are the same values one obtains for uniform permutations (see \cref{sect:app1} for even more explicit computations). \cref{eq:clt-unif-luce} is reminiscent of results showing that local statistics -- such as descents -- have limiting distributions that do not change between uniform and non-uniform models. For example, Kammoun~\cite{kammoun2022universality} proves such universality for the Mallows models, and Kim and Li~\cite{kim2020central} show that descents are asymptotically normal when restricted to permutations in a fixed conjugacy class. See also~\cite{borga2021asymptotic, guionnet2023universality, feray2024asymptotic} for other similar universality results.

One could also investigate the finer properties of local patterns. For instance, it would be nice to study whether, in the above setting, the descents form  the same determinantal point process as those for descents in uniform permutations; see~\cite[Theorem 5.1]{borodin2010adding} for more details.

\medskip
Before proving \cref{thm:lln_local}, \cref{thm:clt_local}, and \cref{thm:general-local-thm}, we provide some applications, including explicit computations for the quantities appearing in these three theorems.

\subsection{Application 1: permutations with Sukhatme weights are locally uniform}\label{sect:app1}

We consider the canonical example of Sukhatme weights $\theta_i=n-i+1$. In this case, $f_n(x) \to f(x) = 1 - x$ for all $x \in [0, 1]$. Since $f(x) > 0$ for a.e.\ $x$, we can apply \cref{thm:general-local-thm} and obtain the following result.

\begin{prop}\label{the:unif-luce}
	Let $\sigma_n$ be a Luce-distributed permutation of size $n$ with Sukhatme weights $\theta_i=n-i+1$. Then
	\[\big(\pcocc(\pi,\sigma_n)\big)_{k \geq 1,\pi\in\mathcal{S}_{k}}\xrightarrow{P}\left(\frac{1}{k!}\right)_{k \geq 1,\pi\in\mathcal{S}_{k}}\] 
	w.r.t.\ the product topology. In particular, $\sigma_n$ quenched Benjamini--Schramm converges. 
    Moreover, for every fixed $k\geq 1$ and $\pi\in\mcl S_k$,
	\begin{equation}
		\sqrt{n} \,\frac{\pcocc(\pi,\sigma_n)-\frac{1}{k!}}{\sqrt{\nu_\infty(\pi)}}\xrightarrow{\mathrm{d}}\mcl{N}(0,1),
	\end{equation}
	where $\nu_\infty(\pi)$ is defined as in \eqref{eq:nu-pi}.
\end{prop}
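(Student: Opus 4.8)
\textbf{Proof proposal for \cref{the:unif-luce}.}

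The plan is to simply verify that the Sukhatme weights satisfy the hypothesis of \cref{thm:general-local-thm}, and then let that theorem do all the work. First I would observe that with $\theta_i = n-i+1$ (or equivalently, after rescaling, $\theta_i = (n-i+1)/n$, which does not change the Luce distribution), we have $f_n(y) = \theta_{\lfloor yn \rfloor}/n = (n - \lfloor yn\rfloor + 1)/n \to 1-y =: f(y)$ for every $y \in [0,1)$. Since $f$ is positive, finite, and measurable on $[0,1]$ (it vanishes only at the single point $y=1$, a Lebesgue-null set, so it is positive almost everywhere), all hypotheses of both \cref{thm:permuton_conv} and \cref{thm:general-local-thm} are met.

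Next I would apply \cref{thm:general-local-thm} directly: equation~\eqref{eq:lim-exp-loc} gives $\Lambda(\pi) = 1/k!$ for every $k \geq 1$ and $\pi \in \mcl S_k$, so Assumption~\eqref{eq:ass-clt-cons} of \cref{thm:lln_local} holds. Invoking \cref{thm:lln_local} then yields $\pcocc(\pi,\sigma_n) \xrightarrow{P} 1/k!$ for each fixed $\pi$; upgrading this to joint convergence of the whole vector $\big(\pcocc(\pi,\sigma_n)\big)_{k\geq 1,\pi\in\mcl S_k}$ in the product topology is automatic, since convergence in probability in the product topology is equivalent to coordinatewise convergence in probability (the index set $\mcl S = \cup_k \mcl S_k$ being countable). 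By the characterization recalled in the paragraph on local convergence (condition (b)), this joint convergence to the deterministic limit $(1/k!)_{\pi}$ is exactly the statement that $\sigma_n$ quenched Benjamini--Schramm converges — indeed the limiting random variables $\Lambda_\pi$ are the constants $1/k!$, which coincide with the consecutive-pattern densities of a uniform permutation.

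Finally, for the central limit theorem, I would combine \cref{thm:clt_local} with the variance asymptotics~\eqref{eq:lim-var-loc} of \cref{thm:general-local-thm}. The latter gives $(\nu_n(\pi))^2/n \to \nu_\infty(\pi)$ with $\nu_\infty(\pi)$ as in~\eqref{eq:nu-pi}. One subtlety to check is that $\nu_\infty(\pi) > 0$, so that the normalization and the Berry--Esseen bound are meaningful; for $k\geq 2$ this should follow because the leading ``variance of a single indicator'' term is nondegenerate (e.g.\ for $\pi = 21$ one reads off a strictly positive contribution from $\widetilde\theta_i(1-\widetilde\theta_i)$ with $\widetilde\theta_i \to 1/2$), and for $k=1$ the statement is vacuous. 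Granting $\nu_\infty(\pi)>0$, the Berry--Esseen bound in \cref{thm:clt_local} reads $32(1+\sqrt6)\,nk^2/(\nu_n(\pi))^3 = O\big(n k^2 / (n\nu_\infty(\pi))^{3/2}\big) = O(n^{-1/2}) \to 0$, so $\big(\cocc(\pi,\sigma_n) - \bbE[\cocc(\pi,\sigma_n)]\big)/\nu_n(\pi) \xrightarrow{d} \mcl N(0,1)$; rewriting $\cocc = n\,\pcocc$, using $\bbE[\pcocc(\pi,\sigma_n)] \to 1/k!$ (which also follows from \eqref{eq:lim-exp-loc}, noting $\bbE[\pcocc(\pi,\sigma_n)] = \frac1n\sum_{i=0}^{n-k} p(\theta_{i+\pi^{-1}(1)},\dots,\theta_{i+\pi^{-1}(k)})$ exactly), and substituting $\nu_n(\pi) = \sqrt{n}\,\sqrt{\nu_\infty(\pi)}(1+o(1))$ via Slutsky gives precisely \eqref{eq:clt-unif-luce}. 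The main (and only real) obstacle is the positivity of $\nu_\infty(\pi)$; everything else is a direct citation of the three preceding theorems.
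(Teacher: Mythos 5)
Your proposal is correct and follows essentially the same route as the paper, which presents this proposition as an immediate corollary of \cref{thm:lln_local}, \cref{thm:clt_local} and \cref{thm:general-local-thm} after checking that the rescaled Sukhatme weights give $f_n(y)=(n-\lfloor yn\rfloor+1)/n\to 1-y$, positive a.e. The two details you flag beyond the paper's one-line derivation --- positivity of $\nu_\infty(\pi)$, and the recentering from $\bbE[\pcocc(\pi,\sigma_n)]$ to $1/k!$, which requires (and here has, since $\bbE[\pcocc(\pi,\sigma_n)]-1/k!=O(\log n/n)=o(n^{-1/2})$) a quantitative rate rather than mere convergence of the mean --- are legitimate points the paper leaves implicit.
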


We show the explicit values for $\nu_\infty(\pi)$ for all patterns of size 2 and 3 (with some matching numerical simulations shown in \cref{fig:CLT_local_luce}):
\begin{align*}
	&\nu_\infty(12)=\nu_\infty(21)=\frac{1}{12},\\  
    &\nu_\infty(123)=\nu_\infty(321)=\frac{23}{180} , \qquad \nu_\infty(132)=\nu_\infty(312)=\nu_\infty(213)=\nu_\infty(231)=\frac{7}{90}.
\end{align*}

\begin{figure}[h!]
    \centering
    \includegraphics[width=0.32\textwidth]{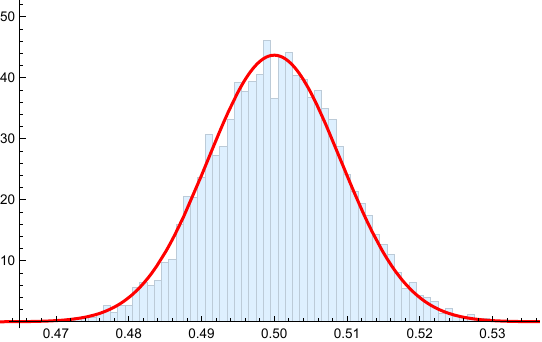}
    \includegraphics[width=0.32\textwidth]{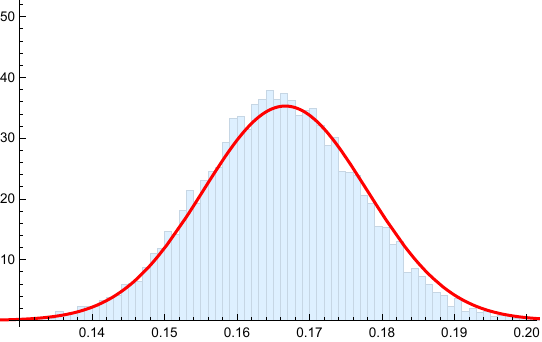}
    \includegraphics[width=0.32\textwidth]{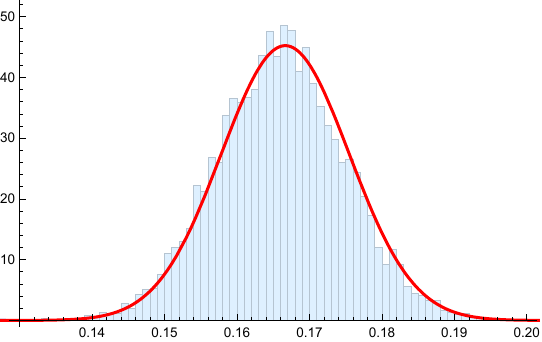}
    \caption{Simulations for the proportion of  consecutive patterns $\pcocc(\pi,\sigma_n)$ when $\sigma_n$ is a Luce-distributed permutation of size $n$ with Sukhatme weights $\theta_i=n-i+1$. In blue we show the histogram (renormalized to be a probability distribution) of the data collected from $3000$ random samples of size $1000$. In red we plot the density of $\mcl{N}(\frac{1}{k!},\nu_\infty(\pi)/1000)$. \textbf{From left to right:} (1) The case of descents, i.e.\ $\pi=21$. (2) The case of $\pi=321$ (3) The case of $\pi=231$.}
    \label{fig:CLT_local_luce}
\end{figure}

\subsection{Application 2: permutations with exponential Sukhatme weights are locally Luce-distributed}\label{sect:app2}

Here, we consider the example of exponential Sukhatme weights $\theta_i=\alpha^{n-i+1}$ for some $\alpha>0$. Note that the assumption of \cref{thm:general-local-thm} is \emph{not} satisfied in this setting, but we will show that we can still prove local convergence for this model and a central limit theorem for consecutive patterns.

\begin{lem}\label{lem:tecn_estm2}
    Fix $k\geq 1$ and $\pi\in\mcl S_k$. Consider the case of exponential Sukhatme weights $\theta_i=\alpha^{n-i+1}$ for some $\alpha>0$. Then \[\Lambda(\pi)=\lim_{n\to +\infty}\frac{1}{n}\sum_{i=0}^{n-k}p\left(\theta_{i+\pi^{-1}(1)},\dots, \theta_{i+\pi^{-1}(k)}\right)=p\left(\alpha^{-\pi^{-1}(1)},\dots, \alpha^{-\pi^{-1}(k)}\right).\]
    Moreover, setting for all $k\geq 1$ and $1\leq h\leq k-1$,
	\begin{equation*}
		\eta_{\pi}(h,k):=\BB P\left(E_{\pi^{-1}(1)}<\dots<E_{\pi^{-1}(k)}, \,\, E_{h+\pi^{-1}(1)}<\dots<E_{h+\pi^{-1}(k)}\right), 
	\end{equation*}
	where $(E_{i})_i$ are independent exponential random variables of parameter $\alpha^{-i}$, and
	\begin{equation}\label{eq:nu_pi2}
		\nu_\infty(\pi):=p\left(\alpha^{-\pi^{-1}(1)},\dots, \alpha^{-\pi^{-1}(k)}\right)+(1-2k)p\left(\alpha^{-\pi^{-1}(1)},\dots, \alpha^{-\pi^{-1}(k)}\right)^2+2 \sum_{h=1}^{k-1}\eta_{\pi}(h,k),
	\end{equation}
	then, as $n\to\infty$,
	\begin{equation*}
		(\nu_{n}(\pi))^2\sim n\cdot \nu_\infty(\pi).
	\end{equation*}
\end{lem}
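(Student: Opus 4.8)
The plan is to reduce everything to the general computations already carried out in the proof of \cref{thm:clt_local} (the exact formula \eqref{eq:defn-variance}) and \cref{thm:general-local-thm} (the averaging argument), keeping track of how the specific structure $\theta_i=\alpha^{n-i+1}$ changes the relevant limits. The key observation is that the function $p$ in \eqref{eq:p-function} is invariant under global rescaling of its arguments, so for any fixed block of $k$ consecutive indices $i+1,\dots,i+k$ we have
\[
p\left(\theta_{i+\pi^{-1}(1)},\dots,\theta_{i+\pi^{-1}(k)}\right)
= p\left(\alpha^{-\pi^{-1}(1)},\dots,\alpha^{-\pi^{-1}(k)}\right),
\]
\emph{independently of $i$} (dividing each weight by the common factor $\alpha^{n-i}$). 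This is exactly the phenomenon that makes the local behavior ``Luce-distributed with weights $\alpha^{-j}$'' rather than uniform: the ratios of weights inside a window of bounded size are frozen.

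First I would establish the formula for $\Lambda(\pi)$: by the display above, every summand in $\frac1n\sum_{i=0}^{n-k}p(\theta_{i+\pi^{-1}(1)},\dots,\theta_{i+\pi^{-1}(k)})$ equals the constant $p(\alpha^{-\pi^{-1}(1)},\dots,\alpha^{-\pi^{-1}(k)})$, and there are $n-k+1$ of them, so the Cesàro average converges to that constant. This verifies assumption \eqref{eq:ass-clt-cons} of \cref{thm:lln_local}, giving the stated law of large numbers.

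Second, I would turn to the variance asymptotics, working directly from \eqref{eq:defn-variance}. The ``diagonal'' part $\sum_{i=0}^{n-k}\big(p(\dots)-p(\dots)^2\big)$ is, by the same rescaling invariance, exactly $(n-k+1)\big(P-P^2\big)$ with $P:=p(\alpha^{-\pi^{-1}(1)},\dots,\alpha^{-\pi^{-1}(k)})$, contributing $n(P-P^2)+O(1)$. For the off-diagonal terms with a fixed shift $1\le h\le k-1$, one considers the two overlapping blocks $\{i+1,\dots,i+k\}$ and $\{i+h+1,\dots,i+h+k\}$, whose union is $\{i+1,\dots,i+k+h\}$; dividing all the weights $\theta_{i+1},\dots,\theta_{i+k+h}$ by the common factor $\alpha^{n-i}$ replaces them by $\alpha^{-1},\dots,\alpha^{-(k+h)}$, so the joint probability
\[
\BB P\left(E_{i+\pi^{-1}(1)}<\dots<E_{i+\pi^{-1}(k)},\; E_{i+h+\pi^{-1}(1)}<\dots<E_{i+h+\pi^{-1}(k)}\right)
\]
— which depends only on the ratios of the parameters because exponential order statistics are scale-invariant — equals $\eta_\pi(h,k)$ as defined in the statement, again independently of $i$. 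Hence each inner sum $\sum_{i=0}^{n-k-h}(\eta_\pi(h,k)-P^2)$ equals $(n-k-h+1)(\eta_\pi(h,k)-P^2)=n(\eta_\pi(h,k)-P^2)+O(1)$. Summing over $h$ and combining with the diagonal part gives
\[
(\nu_n(\pi))^2 = n\Big(P - P^2 + 2\sum_{h=1}^{k-1}\big(\eta_\pi(h,k)-P^2\big)\Big) + O(1)
= n\Big(P + (1-2k)P^2 + 2\sum_{h=1}^{k-1}\eta_\pi(h,k)\Big) + O(1),
\]
which is precisely $n\cdot\nu_\infty(\pi)+O(1)$, so $(\nu_n(\pi))^2\sim n\cdot\nu_\infty(\pi)$ provided $\nu_\infty(\pi)>0$.

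The main obstacle — and the only real point requiring care — is confirming that $\nu_\infty(\pi)>0$ (equivalently that $(\nu_n(\pi))^2$ grows linearly and is not, say, eventually zero), since the Berry--Esseen bound in \cref{thm:clt_local} and the asymptotic equivalence are vacuous otherwise. Here $(\nu_n(\pi))^2=\Var(\cocc(\pi,\sigma_n))$ and one expects strict positivity because $\cocc(\pi,\sigma_n)$ genuinely fluctuates; I would justify this by noting that the Bernoulli indicator of a consecutive occurrence in a window disjoint from all others (e.g. take windows $\ell k$ apart, which are independent in the exponential representation since they involve disjoint sets of $E_j$'s, and $P\in(0,1)$ strictly) already contributes a positive amount $P-P^2$ per window, so $(\nu_n(\pi))^2\ge c\,n$ for some $c>0$ and all large $n$; combined with the exact $O(1)$ error this yields both $\nu_\infty(\pi)>0$ and the claimed equivalence. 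Plugging $(\nu_n(\pi))^2\sim n\,\nu_\infty(\pi)$ into \cref{thm:clt_local} then also delivers the associated CLT, exactly as in the $\cref{thm:general-local-thm}$ case, though that consequence is not part of the present lemma's statement.
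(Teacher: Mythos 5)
Your proposal is correct and follows essentially the same route as the paper: both rest on the observation that $p$ and the joint ordering probabilities appearing in \eqref{eq:defn-variance} are invariant under simultaneous rescaling of the weights, so that dividing the window $\theta_{i+1},\dots,\theta_{i+k+h}$ by $\alpha^{n-i}$ makes every summand independent of $i$ and reduces the Ces\`aro averages to constants. Your extra care about $\nu_\infty(\pi)>0$ goes beyond the paper, which passes directly from the exact identity $(\nu_n(\pi))^2=n\,\nu_\infty(\pi)+O(1)$ to the asymptotic equivalence without comment.
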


\begin{proof}
    It is enough to note that for all $k\geq 1$ and $\pi\in\mcl S_k$,
    \begin{equation*}
        p\left(\theta_{i+\pi^{-1}(1)},\dots, \theta_{i+\pi^{-1}(k)}\right)=p\left(\alpha^{-\pi^{-1}(1)},\dots, \alpha^{-\pi^{-1}(k)}\right).
    \end{equation*}
    Then, the first part of the statement immediately follows. The second part of the statement follows from the additional observation that for all $n\geq 1$,
    \begin{multline*}
        P\left(E_{i+\pi^{-1}(1)}<\dots<E_{i+\pi^{-1}(k)}, E_{i+h+\pi^{-1}(1)}<\dots<E_{i+h+\pi^{-1}(k)}\right)\\
        =\BB P\left(E_{\pi^{-1}(1)}<\dots<E_{\pi^{-1}(k)}, E_{h+\pi^{-1}(1)}<\dots<E_{h+\pi^{-1}(k)}\right), 
    \end{multline*}
    where $(E_i)_i$ are independent exponential random variables of parameter $(\alpha^{n-i+1})_i$ respectively, and $(E_{i})_i$ are independent exponential random variables of parameter $\alpha^{-i}$.
\end{proof}

Therefore, we have the following result.

\begin{thm}
    Let $\sigma_n$ be a Luce-distributed permutation of size $n$ with exponential Sukhatme weights $\theta_i=\alpha^{n-i+1}$ for some $\alpha>0$. Fix $k\geq 1$ and $\pi\in\mcl S_k$. Then
	\begin{equation}
		\sqrt{n} \,\frac{\pcocc(\pi,\sigma_n)-\Lambda(\pi)}{\sqrt{\nu_\infty(\pi)}}\xrightarrow{\mathrm{d}}\mcl{N}(0,1),
	\end{equation}
	where $\Lambda(\pi)=p\left(\alpha^{-\pi^{-1}(1)},\dots, \alpha^{-\pi^{-1}(k)}\right)$ and $\nu_\infty(\pi)$ is defined as in \eqref{eq:nu_pi2}.
\end{thm}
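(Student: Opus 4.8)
The plan is to obtain the statement by feeding \cref{lem:tecn_estm2} into the Berry--Esseen estimate of \cref{thm:clt_local} and tidying up with a Slutsky-type argument; all the genuinely model-specific work is already contained in \cref{lem:tecn_estm2}, so what remains is bookkeeping plus one non-degeneracy check.

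First I would pin down the mean exactly. For exponential Sukhatme weights one has $\theta_{i+j}/\theta_{i+j'}=\alpha^{\,j'-j}$, which does not depend on $i$; since $p$ depends on its arguments only through their ratios, every summand in $\BB E[\cocc(\pi,\sigma_n)]=\sum_{i=0}^{n-k}p(\theta_{i+\pi^{-1}(1)},\dots,\theta_{i+\pi^{-1}(k)})$ equals $p(\alpha^{-\pi^{-1}(1)},\dots,\alpha^{-\pi^{-1}(k)})=\Lambda(\pi)$, so $\BB E[\cocc(\pi,\sigma_n)]=(n-k+1)\Lambda(\pi)$. Next, \cref{lem:tecn_estm2} gives $(\nu_n(\pi))^2\sim n\,\nu_\infty(\pi)$, so, granting $\nu_\infty(\pi)>0$ (see the last paragraph), the right-hand side of the inequality in \cref{thm:clt_local} is $32(1+\sqrt6)\,nk^2/(\nu_n(\pi))^3=O(1/\sqrt n)\to0$, whence
\[
\frac{\cocc(\pi,\sigma_n)-\BB E[\cocc(\pi,\sigma_n)]}{\nu_n(\pi)}\xrightarrow{\mathrm{d}}\mcl N(0,1).
\]
To rewrite this with the normalization appearing in the statement I would use $\cocc(\pi,\sigma_n)=n\,\pcocc(\pi,\sigma_n)$ together with the two facts above:
\begin{align*}
\sqrt n\,\frac{\pcocc(\pi,\sigma_n)-\Lambda(\pi)}{\sqrt{\nu_\infty(\pi)}}
&=\frac{\cocc(\pi,\sigma_n)-n\Lambda(\pi)}{\sqrt{n\,\nu_\infty(\pi)}}\\
&=\frac{\cocc(\pi,\sigma_n)-\BB E[\cocc(\pi,\sigma_n)]}{\nu_n(\pi)}\cdot\frac{\nu_n(\pi)}{\sqrt{n\,\nu_\infty(\pi)}}-\frac{(k-1)\Lambda(\pi)}{\sqrt{n\,\nu_\infty(\pi)}},
\end{align*}
and conclude by Slutsky's theorem: the first factor converges in law to $\mcl N(0,1)$, the deterministic factor $\nu_n(\pi)/\sqrt{n\,\nu_\infty(\pi)}$ tends to $1$ by \cref{lem:tecn_estm2}, and the last deterministic term tends to $0$. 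This parallels the deduction of the central limit theorem in \cref{the:unif-luce} from \cref{thm:clt_local} and \cref{thm:general-local-thm}.

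The one point I expect to require real care, and hence the main obstacle, is the strict positivity $\nu_\infty(\pi)>0$ for $k\geq 2$ (for $k=1$ the statement is degenerate, since $\cocc(\pi,\sigma_n)\equiv n$): without it the normalization is ill-posed and the Berry--Esseen bound is vacuous. I would establish it via the consecutive-occurrence indicators $X_i:=\mathbf 1\{\pat_{[i,i+k-1]}(\sigma_n)=\pi\}$. Through the exponential representation of the Luce model, $X_i$ depends only on $\mathrm{std}(E_i,\dots,E_{i+k-1})$, and for exponential Sukhatme weights the law of $\mathrm{std}(E_i,\dots,E_{i+k-1})$ does not depend on $i$ (the ratios $\theta_{i+1}/\theta_i$ are constant), so $(X_i)_i$ is a stationary $(k-1)$-dependent sequence with $\BB P(X_i=1)=\Lambda(\pi)\in(0,1)$. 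A nonconstant stationary $m$-dependent $\{0,1\}$-valued sequence is not a coboundary, so its time-averaged variance --- which equals $\lim_n\Var(\cocc(\pi,\sigma_n))/n=\nu_\infty(\pi)$ by \cref{lem:tecn_estm2} --- is strictly positive; alternatively, restricting to the mutually independent subfamily $X_1,X_{k+1},X_{2k+1},\dots$ and a block decomposition of $\Var(\cocc(\pi,\sigma_n))$ yields a linear lower bound $\Var(\cocc(\pi,\sigma_n))\geq c\,n$ directly. Everything else is routine.
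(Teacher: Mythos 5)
Your proposal is correct and follows essentially the same route as the paper, which deduces this theorem directly by combining \cref{lem:tecn_estm2} with the Berry--Esseen bound of \cref{thm:clt_local} (the paper gives no further argument beyond this combination, leaving the renormalization bookkeeping implicit exactly as you carry it out). Your additional discussion of the strict positivity of $\nu_\infty(\pi)$ addresses a point the paper passes over in silence; it is a reasonable extra check, though the coboundary/block-decomposition sketch you give for it would need to be fleshed out to be fully rigorous.
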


As a consequence, $(\sigma_n)_n$ Benjamini--Schramm converges. In particular, $\sigma_n$ locally behaves as a Luce-distributed permutation with parameters $(\alpha^{-i})_i$.

\medskip

We conclude by showing the explicit values for $\Lambda(\pi)$  (for general $\alpha$ and specialized to $\alpha=2$) for all patterns of size 2:
\begin{equation}\label{eq:mean1}
\Lambda(12)=\frac{\alpha}{\alpha+1}\stackrel{(\alpha=2)}{=} \frac 2 3 \approx 0.667, \qquad  \qquad \Lambda(21)=\frac{1}{\alpha+1}\stackrel{(\alpha=2)}{=} \frac 1 3 \approx 0.333,
\end{equation}
and all patterns of size 3:
\begin{align}\label{eq:mean2}
&\Lambda(123)=\frac{\alpha^{3}}{(\alpha+1)\,(\alpha^{2}+\alpha+1)}  \stackrel{(\alpha=2)}{=} \frac{8}{21}  \approx 0.381, \\
&\Lambda(132)=\frac{\alpha^{2}}{(\alpha+1)\,(\alpha^{2}+\alpha+1)}\stackrel{(\alpha=2)}{=} \frac{4}{21} \approx 0.190, \notag\\
&\Lambda(213)=\frac{\alpha^{3}}{(\alpha^{2}+1)\,(\alpha^{2}+\alpha+1)}\stackrel{(\alpha=2)}{=} \frac{8}{35}  \approx 0.229, \notag\\
&\Lambda(312)=\frac{\alpha}{(\alpha^{2}+1)\,(\alpha^{2}+\alpha+1)}\stackrel{(\alpha=2)}{=} \frac{2}{35}  \approx 0.057, \notag\\
&\Lambda(231)=\frac{\alpha}{(\alpha+1)\,(\alpha^{2}+\alpha+1)}\stackrel{(\alpha=2)}{=} \frac{2}{21} \approx 0.095, \notag\\
&\Lambda(321)=\frac{1}{(\alpha+1)\,(\alpha^{2}+\alpha+1)}\stackrel{(\alpha=2)}{=} \frac{1}{21}  \approx 0.048;\notag
\end{align}
and the values for $\nu_\infty(\pi)$ (only specialized to $\alpha=2$) for all patterns of size 2  (with some matching numerical simulations shown in \cref{fig:CLT_local_luce2}):
\begin{equation}\label{eq:var1}
	\nu_\infty(12)=\nu_\infty(21)=\frac{2}{21}\approx 0.095,
\end{equation}
and all patterns of size 3 (with some matching numerical simulations shown in \cref{fig:CLT_local_luce3}):
\begin{align}\label{eq:var2}
&\nu_\infty(123)  \stackrel{(\alpha=2)}{=} \frac{6184}{22785}  \approx 0.271, \qquad\qquad\quad\,\,\,
\nu_\infty(132)\stackrel{(\alpha=2)}{=} \frac{6892}{68355} \approx 0.101, \notag\\
&\nu_\infty(213)\stackrel{(\alpha=2)}{=} \frac{1496456}{15197595}  \approx 0.098, \qquad\qquad
\nu_\infty(312)\stackrel{(\alpha=2)}{=} \frac{3802}{68355} \approx 0.056, \\
&\nu_\infty(231)\stackrel{(\alpha=2)}{=} \frac{635822}{15197595}  \approx 0.042, \qquad\qquad
\nu_\infty(321)\stackrel{(\alpha=2)}{=} \frac{976}{22785}  \approx 0.043.\notag
\end{align}

\begin{figure}[h!]
    \centering
    \includegraphics[width=0.49\textwidth]{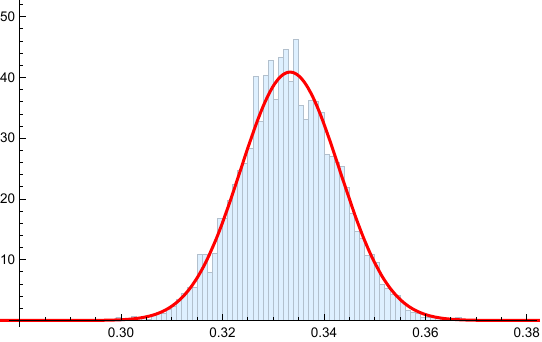}
    \includegraphics[width=0.49\textwidth]{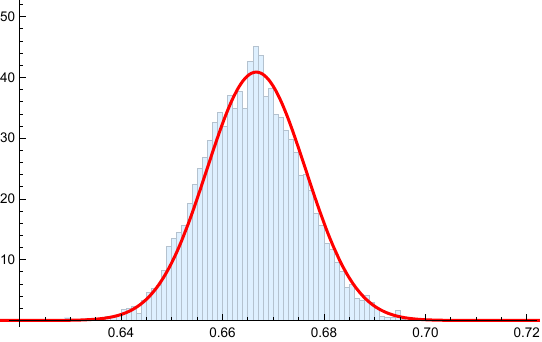}
    \caption{Simulations for the proportion of  consecutive patterns $\pcocc(\pi,\sigma_n)$ when  $\pi\in\mcl S_2$ and $\sigma_n$ is a Luce-distributed permutation of size $n$  with exponential Sukhatme weights $\theta_i=2^{n-i+1}$. In blue we show the histogram (renormalized to be a probability distribution) of the data collected from $3000$ random samples of size $1000$. In red we plot the density of $\mcl{N}(\Lambda(\pi),\nu_\infty(\pi)/1000)$ with the values of $\Lambda(\pi)$ and $\nu_\infty(\pi)$ given in \eqref{eq:mean1} and \eqref{eq:var1}. \textbf{From left to right:} (1) The case of descents, i.e.\ $\pi=21$. (2) The case of $\pi=12$.}
    \label{fig:CLT_local_luce2}
\end{figure}

\begin{figure}[h!]
    \centering
    \includegraphics[width=0.32\textwidth]{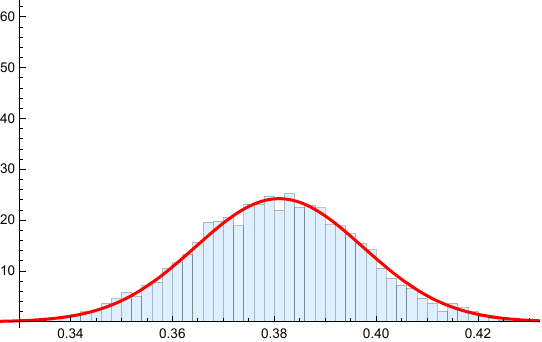}
    \includegraphics[width=0.32\textwidth]{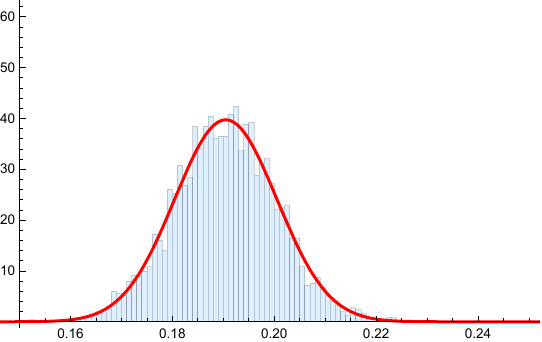}
    \includegraphics[width=0.32\textwidth]{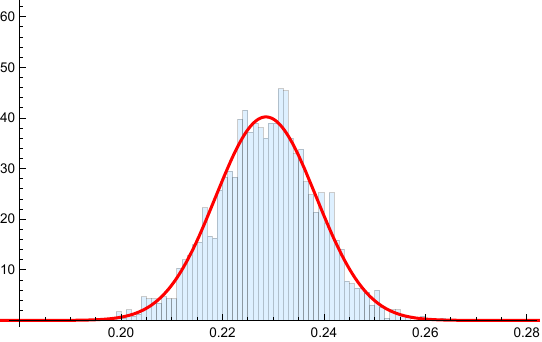}

    \includegraphics[width=0.32\textwidth]{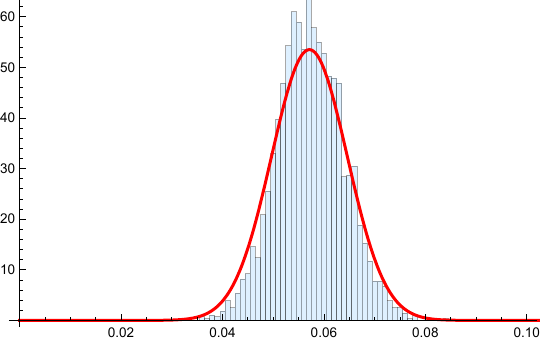}
    \includegraphics[width=0.32\textwidth]{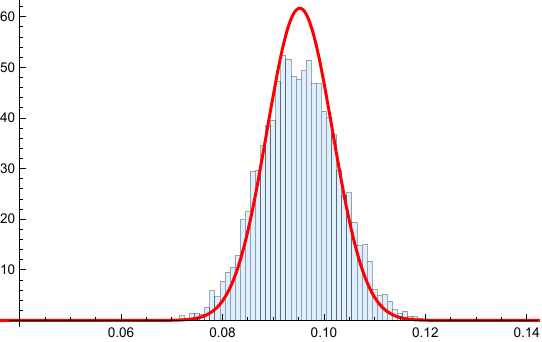}
    \includegraphics[width=0.32\textwidth]{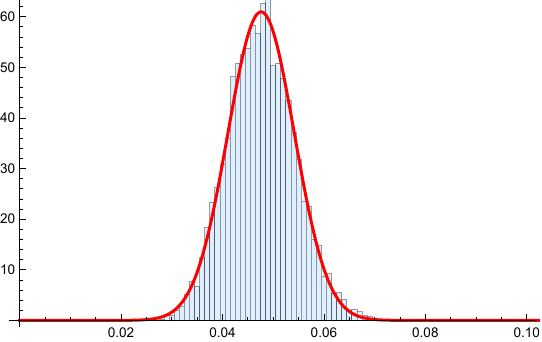}
    
    \caption{Simulations for the proportion of  consecutive patterns $\pcocc(\pi,\sigma_n)$ when  $\pi\in\mcl S_3$ and $\sigma_n$ is a Luce-distributed permutation of size $n$  with exponential Sukhatme weights $\theta_i=2^{n-i+1}$. In blue we show the histogram (renormalized to be a probability distribution) of the data collected from $3000$ random samples of size $1000$. In red we plot the density of $\mcl{N}(\Lambda(\pi),\nu_\infty(\pi)/1000)$ with the values of $\Lambda(\pi)$ and $\nu_\infty(\pi)$ given in \eqref{eq:mean2} and \eqref{eq:var2}. \textbf{From top-left to bottom-right:} (1)  $\pi=123$, (2) $\pi=132$, (3) $\pi=213$, (4) $\pi=312$, (5) $\pi=231$, (6) $\pi=321$.}
    \label{fig:CLT_local_luce3}
\end{figure}

\subsection{Proofs of the main results}\label{sect:proofs-local}

We now turn to the proofs of \cref{thm:lln_local}, \cref{thm:clt_local}, and \cref{thm:general-local-thm}.

\begin{proof}[Proof of \cref{thm:lln_local}]
	Note that by the nature of the Luce model (recall~\cite[Theorem 2.1]{chatterjee2023enumerative} and \cref{rmk:inv})
	\begin{equation*}
		\cocc(\pi,\sigma_n)=\sum_{i=0}^{n-k}\mathds{1}_{\left\{\pat_{[i+1,i+k]}(\sigma_n)=\pi\right\}}=\sum_{i=0}^{n-k}\mathds{1}_{\left\{E_{i+\pi^{-1}(1)}<\dots<E_{i+\pi^{-1}(k)}\right\}},
	\end{equation*}
	where $(E_i)_i$ are independent exponential random variables of parameter $(\theta_{i})_i$, respectively. As a consequence,
	\begin{multline*}
		\BB E\left[\pcocc(\pi,\sigma_n)\right]=\frac{1}{n}\sum_{i=0}^{n-k}\BB P\left(E_{i+\pi^{-1}(1)}<\dots<E_{i+\pi^{-1}(k)}\right)\\
        =\frac{1}{n}\sum_{i=0}^{n-k}p\left(\theta_{i+\pi^{-1}(1)},\dots, \theta_{i+\pi^{-1}(k)}\right)\to\Lambda(\pi),
	\end{multline*}
	where in the last step, we used our assumption.
	Moreover,
	\begin{multline*}
		\BB E\left[\pcocc(\pi,\sigma_n)^2\right]=\frac{1}{n^2}\sum_{i=0}^{n-k}\BB P\left(E_{i+\pi^{-1}(1)}<\dots<E_{i+\pi^{-1}(k)}\right)\\
		+\frac{2}{n^2}\sum_{0\leq i<j\leq n-k}\BB P\left(E_{i+\pi^{-1}(1)}<\dots<E_{i+\pi^{-1}(k)}, E_{j+\pi^{-1}(1)}<\dots<E_{j+\pi^{-1}(k)}\right).
	\end{multline*}
	Noting that when the indexes $i<j$ satisfy $j-i\geq k$ then the events 
    \[\left\{E_{i+\pi^{-1}(1)}<\dots<E_{i+\pi^{-1}(k)}\right\}\quad\text{and}\quad\left\{E_{j+\pi^{-1}(1)}<\dots<E_{j+\pi^{-1}(k)}\right\}\] 
    are independent, we get that 
	\begin{align*}
		&\BB E\left[\pcocc(\pi,\sigma_n)^2\right]\\
        =&\frac{1}{n^2}\sum_{i=0}^{n-k}\BB P\left(E_{i+\pi^{-1}(1)}<\dots<E_{i+\pi^{-1}(k)}\right)\\
		&+\frac{2}{n^2}\sum_{h=k}^{n-k}\sum_{i=0}^{n-k-h}\BB P\left(E_{i+\pi^{-1}(1)}<\dots<E_{i+\pi^{-1}(k)}\right)\BB P\left( E_{i+h+\pi^{-1}(1)}<\dots<E_{i+h+\pi^{-1}(k)}\right)\\
		&+\frac{2}{n^2}\sum_{h=1}^{k-1}\sum_{i=0}^{n-k-h}\BB P\left(E_{i+\pi^{-1}(1)}<\dots<E_{i+\pi^{-1}(k)}, E_{i+h+\pi^{-1}(1)}<\dots<E_{i+h+\pi^{-1}(k)}\right).
	\end{align*}
	On the other hand,
	\begin{align*}
		&\BB E\left[\pcocc(\pi,\sigma_n)\right]^2\\
        =&\frac{1}{n^2}\sum_{i=0}^{n-k}\BB P\left(E_{i+\pi^{-1}(1)}<\dots<E_{i+\pi^{-1}(k)}\right)^2\\
		&+\frac{2}{n^2}\sum_{h=k}^{n-k}\sum_{i=0}^{n-k-h}\BB P\left(E_{i+\pi^{-1}(1)}<\dots<E_{i+\pi^{-1}(k)}\right)\BB P\left( E_{i+h+\pi^{-1}(1)}<\dots<E_{i+h+\pi^{-1}(k)}\right)\\
		&+\frac{2}{n^2}\sum_{h=1}^{k-1}\sum_{i=0}^{n-k-h}\BB P\left(E_{i+\pi^{-1}(1)}<\dots<E_{i+\pi^{-1}(k)}\right)\BB P\left( E_{i+h+\pi^{-1}(1)}<\dots<E_{i+h+\pi^{-1}(k)}\right).
	\end{align*}
	Therefore, we conclude that
	\begin{align}\label{eq:variance}
		&\Var\left(\pcocc(\pi,\sigma_n)\right)\notag\\
        =&\frac{1}{n^2}\sum_{i=0}^{n-k}\left(\BB P\left(E_{i+\pi^{-1}(1)}<\dots<E_{i+\pi^{-1}(k)}\right)-\BB P\left(E_{i+\pi^{-1}(1)}<\dots<E_{i+\pi^{-1}(k)}\right)^2\right)\notag\\
		&+\frac{2}{n^2}\sum_{h=1}^{k-1}\sum_{i=0}^{n-k-h}\Bigg(  \BB P\left(E_{i+\pi^{-1}(1)}<\dots<E_{i+\pi^{-1}(k)}, E_{i+h+\pi^{-1}(1)}<\dots<E_{i+h+\pi^{-1}(k)}\right)\notag\\
		&\qquad\qquad\quad-\BB P\left(E_{i+\pi^{-1}(1)}<\dots<E_{i+\pi^{-1}(k)}\right)\BB P\left( E_{i+h+\pi^{-1}(1)}<\dots<E_{i+h+\pi^{-1}(k)}\right)\Bigg).
	\end{align}
	Since all the terms in the sums above are upper bounded by 1, we conclude that
	\begin{equation*}
		\Var\left(\pcocc(\pi,\sigma_n)\right)\leq\frac{n+2kn}{n^2}\to 0.
	\end{equation*}
	The theorem statement follows from a standard application of the Second-moment method.
\end{proof}

\begin{proof}[Proof of \cref{thm:clt_local}]
	Note that from \eqref{eq:variance} we have that
	\begin{align*}
		&\Var\left(\cocc(\pi,\sigma_n)\right)\\
  &=\sum_{j=0}^{n-k}\left(\BB P\left(E_{j+\pi^{-1}(1)}<\dots<E_{j+\pi^{-1}(k)}\right)-\BB P\left(E_{j+\pi^{-1}(1)}<\dots<E_{j+\pi^{-1}(k)}\right)^2\right)\\
		&\,\,\,\,+2\sum_{h=1}^{k-1}\sum_{i=0}^{n-k-h}\Bigg( \BB P\left(E_{i+\pi^{-1}(1)}<\dots<E_{i+\pi^{-1}(k)}, E_{i+h+\pi^{-1}(1)}<\dots<E_{i+h+\pi^{-1}(k)}\right)\\
		&\qquad\qquad\quad-\BB P\left(E_{i+\pi^{-1}(1)}<\dots<E_{i+\pi^{-1}(k)}\right)\BB P\left( E_{i+h+\pi^{-1}(1)}<\dots<E_{i+h+\pi^{-1}(k)}\right)\Bigg)\\
        &=(\nu_{n}(\pi))^2.
	\end{align*}
	Then, the theorem statement follows from applying \cite[Corollary 2]{baldi1989normal}.
\end{proof}

\begin{proof}[Proof of \cref{thm:general-local-thm}]

Fix $k\in\mathbb{N}$. For a permutation $\pi$ in $\mcl{S}_k$ define

\begin{equation}\label{eq:phi-pi-defn}
    \Phi_\pi(x_1,\dots,x_k)\coloneqq\prod_{j=1}^k
\frac{x_{\pi(j)}}{x_{\pi(j)}+x_{\pi(j+1)}+\cdots+x_{\pi(k)}}\,,\qquad (x_1,\dots,x_k)\in(0,\infty)^k.
\end{equation}
We first prove \eqref{eq:lim-exp-loc}. Since we are proving the result for any permutation $\pi$, recalling the definition of $p(\cdot)$ from \eqref{eq:p-function}, it is sufficient to prove that 
\begin{equation}\label{eq:wejifbiweubfowe}
    \lim_{n\to\infty}\frac{1}{n}\sum_{i=0}^{n-k}
\Phi_\pi\!\bigl(\theta_{i+1},\dots,\theta_{i+k}\bigr)
=\frac{1}{k!}.
\end{equation}

First, we observe some basic properties of $\Phi_\pi$.
Each factor in $\Phi_\pi$ is at most $1$, hence $0\le\Phi_\pi\le 1$. Moreover
$\Phi_\pi$ is $C^1$  on $(0,\infty)^k$ and simultaneous rescaling of all coordinates does not change $\Phi_\pi$.
In particular, for any $c>0$, \[\Phi_\pi(c,\dots,c)=\prod_{j=1}^k\frac{1}{k-j+1}=\frac{1}{k!}.\]

We now divide the rest of the proof into five main steps (the first four steps are dedicaded to the proof of \eqref{eq:wejifbiweubfowe}, then the final fifth step is dedicated to the proof of \eqref{eq:lim-var-loc} from the theorem statement).

\smallskip
\noindent\emph{\underline{Step 1: Restrict $f$ on a large set and gain regularity.}}
Because $f$ is positive and finite a.e., there exists a sequence of sets
\[
E_m \coloneqq \Bigl\{y\in[0,1]: \tfrac{1}{m}\le f(y)\le m\Bigr\}
\]
with $|E_m|\uparrow 1$ as $m\to\infty$, where $|E_m|$ denotes the Lebesgue measure of $E_m$.
Fix $m\in\mathbb{N}$ and $\eta>0$. By Lusin's theorem, there exists a compact set
$K'\subset E_m$ with $|K'|\ge |E_m|-\eta$ on which $f$ is continuous (hence uniformly continuous).
By Egorov's theorem, there exists a measurable set $K\subset K'$ with
$|K|\ge |K'|-\eta$ such that $f_n\to f$ uniformly on $K$.

Summarizing, $K$ is measurable, $|K|\ge |E_m|-2\eta$, $f$ is uniformly continuous on $K$, and $f_n\to f$ uniformly on $K$. Also, on $K$ we have
$\frac{1}{m}\le f\le m$.

\medskip
\noindent\emph{\underline{Step 2: Good windows and their proportion.}}
For $n\in\mathbb{N}$ set $y_i:=i/n$ for $i=0,\dots,n$.
Call $i\in\{0,\dots,n-k\}$ \textbf{good} if
\[
y_i,\ y_i+\tfrac{1}{n},\ \dots,\ y_i+\tfrac{k}{n}\ \in K.
\]
Let $G_n$ be the set of good indices, and $B_n$ its complement in $\{0,\dots,n-k\}$ ($B_n$ denotes the set of \textbf{bad} indices).
We claim
\begin{equation}\label{eq:bad-bound}
\#B_n \le (k+1)\cdot\#\{0\le j\le n:\ y_j\notin K\}+k.
\end{equation}
Indeed, if $i$ is bad, then for some $r\in\{0,\dots,k\}$ we have $y_{i+r}\notin K$.
Each point $y_j\notin K$ can serve as $y_{i+r}$ for at most $k+1$ different $i$. Finally, there are at most $k$ boundary indices to account for the range constraint $0\le i\le n-k$.

We would like now to claim that $\frac{1}{n}\#\{0\le j\le n:\ y_j\notin K\}$ converges to the Lebesgue measure of the complement of $K$ in $[0,1]$, but we need some care since we do not know that $\partial K$ has zero Lebesgue measure. Hence we need to take a small detour. By inner regularity, choose a finite union of closed intervals
$F\subset K$ with $|K\setminus F|<\eta$. Then
\[
  \#\{0\le j\le n: y_j\notin K\}
  \;\le\; \#\{0\le j\le n: y_j\notin F\}.
\]
Since $F$ is a finite union of intervals,
$\displaystyle \frac{1}{n}\#\{0\le j\le n: y_j\in F\}\to |F|$,
hence $\displaystyle \limsup_{n\to\infty}\frac{1}{n}\#\{0\le j\le n: y_j\notin F\}\le 1-|F|$.
Combining with \eqref{eq:bad-bound} gives
\[
\limsup_{n\to\infty}\frac{\#B_n}{n}
\;\le\; (k+1)\,(1-|F|).
\]
Because $|F|\ge |K|-\eta\ge |E_m|-3\eta$, we obtain
\begin{equation}\label{eq:good-density2}
  \limsup_{n\to\infty}\frac{\#B_n}{n}
  \;\le\; (k+1)\,\bigl(1-|E_m|+3\eta\bigr).
\end{equation}

\medskip
\noindent\emph{\underline{Step 3: Uniform control of summands on good indices.}}
Fix a good $i\in G_n$. For $r=1,\dots,k$ write
\[
x_{i,r}\ :=\ \theta_{i+r}\ =\ f_n\!\Bigl(y_i+\frac{r}{n}\Bigr).
\]
Fix now $\varepsilon'>0$. Since the entire window $\bigl\{y_i+\frac{r}{n}:1\le r\le k\bigr\}\subset K$ by definition of good indices and $f_n\to f$ uniformly on $K$,
we get that for $n$ large enough,
\begin{equation}\label{eq:uniform-approx}
\max_{1\le r\le k}\Bigl|x_{i,r} - f\!\Bigl(y_i+\frac{r}{n}\Bigr)\Bigr|\ \le\ \varepsilon'.
\end{equation}
Because $f$ is uniformly continuous on $K$ and $k/n\to 0$, we also get that for $n$ large enough,
\begin{equation}\label{eq:uni-cont}
\max_{1\le r\le k}\Bigl|f\!\Bigl(y_i+\frac{r}{n}\Bigr)-f(y_i)\Bigr|\ \le\ \varepsilon'.
\end{equation}
Combining \eqref{eq:uniform-approx} and \eqref{eq:uni-cont}, we obtain that for $n$ large enough,
\begin{equation}\label{eq;lip-bound}
    \max_{1\le r\le k}\bigl|x_{i,r}-f(y_i)\bigr|\ \le\ 2\varepsilon'.
\end{equation}
Since $\frac{1}{m}\le f\le m$ on $K$, choosing $\varepsilon' \le \min\{\frac{1}{4m},\,\frac{m}{2}\}$ we obtain, for $n$ large enough,
\[
\frac{1}{2m}\ \le\ x_{i,r}\ \le\ 2m,\qquad \text{for all $r=1,\dots,k$}.
\]
Hence, for $n$ large enough, the vector $(x_{i,1},\dots,x_{i,k})$ lies in the compact rectangle
$\mathcal{R}_m:=[\frac{1}{2m},\,2m]^k\subset(0,\infty)^k$.
Because $\Phi_\pi$ is $C^1$ on $(0,\infty)^k$, it is Lipschitz on $\mathcal{R}_m$:
There exists $L_m<\infty$ such that
\[
\bigl|\Phi_\pi(u)-\Phi_\pi(v)\bigr| \ \le\ L_m\,\|u-v\|_\infty,\qquad \text{ for all $u,v\in\mathcal{R}_m$}.
\]
Applying this with $u=(x_{i,1},\dots,x_{i,k})$ and $v=(f(y_i),\dots,f(y_i))$ gives (thanks to \eqref{eq;lip-bound}) the following bound: for all $n$ large enough,
\begin{equation}\label{eq:lipschitz}
\Bigl|\,\Phi_\pi\bigl(x_{i,1},\dots,x_{i,k}\bigr) - \Phi_\pi\bigl(f(y_i),\dots,f(y_i)\bigr)\,\Bigr|
\ \le\ 2L_m\,\varepsilon'.
\end{equation}
Since $\Phi_\pi(c,\dots,c)=1/k!$ for all $c>0$,~\eqref{eq:lipschitz} yields, for all good $i\in G_n$ and all $n$ large enough,
\begin{equation}\label{eq:good-close}
\Bigl|\,\Phi_\pi\bigl(\theta_{i+1},\dots,\theta_{i+k}\bigr) - \tfrac{1}{k!}\,\Bigr|
\ \le\ 2L_m\,\varepsilon'.
\end{equation}

\medskip
\noindent\emph{\underline{Step 4: Average and limits.}}
Using $0\le \Phi_\pi\le 1$ and decomposing the average over $G_n$ and $B_n$, we get
\[
\left|\frac{1}{n}\sum_{i=0}^{n-k}\Phi_\pi(\theta_{i+1},\dots,\theta_{i+k}) - \frac{1}{k!}\right|
\ \le\ \frac{1}{n}\sum_{i\in G_n}\Bigl|\,\Phi_\pi(\theta_{i+1},\dots,\theta_{i+k})-\tfrac{1}{k!}\,\Bigr| + \frac{\#B_n}{n}.
\]
By \eqref{eq:good-close}, for $n$ large enough,
\[
\frac{1}{n}\sum_{i\in G_n}\Bigl|\,\Phi_\pi(\theta_{i+1},\dots,\theta_{i+k})-\tfrac{1}{k!}\,\Bigr| \ \le\ 2L_m\,\varepsilon'\,\frac{\#G_n}{n}
\ \le\ 2L_m\,\varepsilon',
\]
since $G_n\subset\{0,\dots,n-k\}$ by definition. Taking $\varepsilon' := \min\{\frac{1}{4m},\,\frac{m}{2},\,\frac{\varepsilon}{4L_m}\}$ gives
\[
\limsup_{n\to\infty}
\left|\frac{1}{n}\sum_{i=0}^{n-k}\Phi_\pi(\theta_{i+1},\dots,\theta_{i+k}) - \frac{1}{k!}\right|
\ \le\ \frac{\varepsilon}{2} + \limsup_{n\to\infty}\frac{\#B_n}{n}.
\]
By \eqref{eq:good-density2}, we get that 
\[
\limsup_{n\to\infty}
\left|\frac{1}{n}\sum_{i=0}^{n-k}\Phi_\pi(\theta_{i+1},\dots,\theta_{i+k}) - \frac{1}{k!}\right|
\ \le\ \frac{\varepsilon}{2} + (k+1)\,(1-|E_m|+3\eta).
\]
Now, first let $\eta\downarrow 0$, then let $m\to\infty$ so that $|E_m|\uparrow 1$, and finally, let $\varepsilon\downarrow 0$.
This concludes the proof of \eqref{eq:wejifbiweubfowe}.

\medskip
\noindent\emph{\underline{Step 5: The limiting variance.}} We finally prove the claim in \eqref{eq:lim-var-loc}.
Set for all $h\in\{1,\ldots,k-1\}$ and $(x_1,\dots,x_{h+k})\in(0,\infty)^{h+k}$,
\begin{equation*}
    J^{h}_{\pi}(x_1,\dots,x_{h+k})\coloneqq\BB P\left(E_{\pi(1)}<\dots<E_{\pi(k)}, E_{h+\pi(1)}<\dots<E_{h+\pi(k)}\right),
\end{equation*}
where $(E_i)_i$ are independent exponential random variables of parameter $(x_{i})_i$. Recall also the function $\Phi_\pi$ introduced in \eqref{eq:phi-pi-defn}.

Again, since we are proving the result for any permutation $\pi$, it is sufficient to prove that (recall the expression for $(\nu_{n}(\pi))^2$ from \eqref{eq:defn-variance} in \cref{thm:clt_local})
\begin{multline*}
\frac{1}{n}\sum_{i=0}^{n-k}\left(\Phi_\pi\!\bigl(\theta_{i+1},\dots,\theta_{i+k}\bigr)-\Phi_\pi\!\bigl(\theta_{i+1},\dots,\theta_{i+k}\bigr)^2\right)\\
+\frac{2}{n}\sum_{h=1}^{k-1}\sum_{i=0}^{n-k-h}\left(J^{h}_{\pi}\bigl(\theta_{i+1},\dots,\theta_{i+k}\bigr)-\Phi_\pi\!\bigl(\theta_{i+1},\dots,\theta_{i+k}\bigr)\Phi_\pi\!\bigl(\theta_{i+h+1},\dots,\theta_{i+h+k}\bigr)\right)\\
\longrightarrow \frac{1}{k!}+\frac{1-2k}{(k!)^2}+2\sum_{h=1}^{k-1}\zeta_{\pi^{-1}}(h,k),
\end{multline*}
where  $\zeta_\pi(h,k)$ was introduced in \eqref{eq:wefvbiwebfow}.

Note that for all $c>0$,
\begin{equation*}
\Phi_\pi(c,\ldots,c)=1/k!\qquad\text{and}\qquad J_\pi^h(c,\ldots,c)=\zeta_\pi(h,k).
\end{equation*}
Moreover, for every $m\in\mathbb N$, there exists $L_m<\infty$ such that for all $h\in\{1,\ldots,k-1\}$,
\begin{align}
&|\Phi_\pi(u)-\Phi_\pi(v)|
\le L_m\|u-v\|_\infty
\quad\forall\,u,v\in[1/m,m]^k,\notag\\
&|J^{h}_{\pi}(u)-J^{h}_{\pi}(v)|
\le L_m\|u-v\|_\infty
\quad\;\forall\,u,v\in[1/m,m]^{h+k}.\label{eq;lewvbeiw}
\end{align}
Indeed, the first inequality was already used in Step 3 of the proof and for the second inequality it is enought to notice the following facts: for $u=(u_1,\dots,u_{h+k})\in[1/m,m]^{h+k}$,
\[
J^h_\pi(u)=\int_A \Big(\prod_{r=1}^{h+k}u_r\Big)\exp\!\Big(-\sum_{r=1}^{h+k} u_r t_r\Big)\,dt,
\]
where $A\subset(0,\infty)^{h+k}$ is a polyhedral cone.
Differentiating the integrand, we obtain
\[
\frac{\partial}{\partial u_r}
\Big[\prod_s u_s\,e^{-\sum_su_s t_s}\Big]
=\Big(\tfrac{1}{u_r}-t_r\Big)\prod_s u_s\,e^{-\sum_su_s t_s}.
\]
For $u_r\in[1/m,m]$, this is bounded by  $C_m(1+t_r)e^{-1/m\sum_s t_s}$, which is integrable on $(0,\infty)^{h+k}$, uniformly in $u$. By dominated convergence, $J^h_\pi$ is $C^1$ on $[1/m,m]^{h+k}$ and $\nabla J^h_\pi$ is bounded. Taking $L_m:=\sup_{h\in\{1,\dots,k-1\},\xi\in[1/m,m]^{h+k}}\|\nabla J^h_\pi(\xi)\|_1$ proves \eqref{eq;lewvbeiw}.

Since we have the same properties for $J^h_\pi$ as we had for $\Phi_\pi$, using exactly the same proof we used to prove in the previous steps that
\[
\lim_{n\to\infty}\frac{1}{n}\sum_{i=0}^{n-k}
\Phi_\pi\!\bigl(\theta_{i+1},\dots,\theta_{i+k}\bigr)
=\frac{1}{k!},
\]
we get that 
\[
\frac{1}{n}\sum_{i=0}^{n-k}\left(\Phi_\pi\!\bigl(\theta_{i+1},\dots,\theta_{i+k}\bigr)-\Phi_\pi\!\bigl(\theta_{i+1},\dots,\theta_{i+k}\bigr)^2\right)
\longrightarrow \frac{1}{k!}-\frac{1}{(k!)^2},
\]
and that
\begin{multline*}
    \frac{2}{n}\sum_{h=1}^{k-1}\sum_{i=0}^{n-k-h}\left(J^{h}_{\pi}\bigl(\theta_{i+1},\dots,\theta_{i+k}\bigr)-\Phi_\pi\!\bigl(\theta_{i+1},\dots,\theta_{i+k}\bigr)\Phi_\pi\!\bigl(\theta_{i+h+1},\dots,\theta_{i+h+k}\bigr)\right)\\
\longrightarrow 2\sum_{h=1}^{k-1}\zeta_{\pi^{-1}}(h,k)-\frac{2k-2}{(k!)^2}.
\end{multline*}
The sum of the right hand side of the last two equations is
\[
\frac{1}{k!}+\frac{1-2k}{(k!)^2}+2\sum_{h=1}^{k-1}\zeta_{\pi^{-1}}(h,k),
\]
as we wanted. This concludes the proof of \eqref{eq:lim-var-loc}, and thereby the entire theorem.
\end{proof}

\section{A central limit theorem for the number of inversions}\label{sect:clt-inv}

\cref{sect:local} developed central limit theorems for local statistics under the Luce model. In this section, we prove a similar theorem for global patterns, at least for the most well-known special case: inversions. The argument is a variant of the Hoeffding-Hajek projection method.
We believe it generalizes to more complex patterns.

\begin{thm}\label{them:clt-inv}
Let $\theta_1, \theta_2, \ldots,\theta_n$ be positive real numbers. Let $\sigma_n\sim \Luce(\theta_1,\dots,\theta_n)$ and recall that $\occ(21,\sigma_n)$ denotes the number of inversions in $\sigma_n$.
Then 
\[
\bbE(\occ(21,\sigma_n)) = \sum_{1\le i<j\le n} \frac{\theta_j}{\theta_i+\theta_j},
\]
and 
\[
\Var(\occ(21,\sigma_n)) = \sum_{1\le i<j\le n} \frac{\theta_i\theta_j}{(\theta_i+\theta_j)^2} + \sum_{1\le i<j<k\le n} \frac{4\theta_i\theta_j^2\theta_k}{(\theta_i+\theta_j+\theta_k)(\theta_i+\theta_j)(\theta_i+\theta_k)(\theta_j+\theta_k)}. 
\]
Lastly, define $a := \left(\sqrt{\Var(\occ(21,\sigma_n))} - n \right)^+$, where $x^+$ denotes the positive part of a real number $x$. 
Then
\begin{align*}
\sup_{t\in \bbR} \biggl|\bbP\biggl(\frac{\occ(21,\sigma_n)-\bbE(\occ(21,\sigma_n))}{\sqrt{\Var(\occ(21,\sigma_n))}} \le t\biggr) - \Phi(t)\biggr|&\le \frac{Cn^4}{a^{3}} +  \frac{Cn}{a} + \frac{Cn^{2/3}}{a^{2/3}},
\end{align*}
where $C$ is a universal constant and $\Phi(t)$ denotes the cumulative distribution function of the standard normal distribution.
\end{thm}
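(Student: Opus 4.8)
The plan is to combine the exponential representation of the Luce model with a Hájek (Hoeffding) projection. Let $E_1,\dots,E_n$ be independent with $E_i\sim\mathrm{Exp}(\theta_i)$; as recalled in the proof of \cref{thm:permuton_conv}, $\sigma_n(i)=\sum_{j}\mathbf 1\{E_j\le E_i\}$, so the pair $i<j$ is an inversion of $\sigma_n$ exactly when $E_j<E_i$, and therefore
\[
W:=\occ(21,\sigma_n)=\sum_{1\le i<j\le n}\xi_{ij},\qquad \xi_{ij}:=\mathbf 1\{E_j<E_i\}.
\]
Since $\mathbb{P}(E_j<E_i)=\theta_j/(\theta_i+\theta_j)$ the formula for $\mathbb{E}(W)$ is immediate. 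For the variance, $\operatorname{Cov}(\xi_{ij},\xi_{kl})=0$ as soon as $\{i,j\}\cap\{k,l\}=\emptyset$, so $\Var(W)$ is the sum of the variances $\Var(\xi_{ij})=\theta_i\theta_j/(\theta_i+\theta_j)^2$ and of the covariances over pairs overlapping in exactly one index; grouping the latter by the $3$-element set $\{a,b,c\}$ they span and evaluating each of the three resulting covariances by inclusion--exclusion over the orderings of $E_a,E_b,E_c$ (equivalently, by integrating the relevant exponential densities) gives, after simplification, the stated triple sum. This first part is purely computational.

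For the central limit theorem, view $W$ as a function of the independent variables $E_1,\dots,E_n$ and set $S_i:=\mathbb{E}[W\mid E_i]-\mathbb{E}[W]$, which up to an additive constant equals $\sum_{j>i}(1-\mathrm e^{-\theta_j E_i})+\sum_{j<i}\mathrm e^{-\theta_j E_i}$; thus $S_i$ is a centred function of $E_i$ alone with $|S_i|\le n$. Put $\widehat W:=\mathbb{E}[W]+\sum_{i=1}^n S_i$ (a sum of independent, centred, bounded summands) and $R:=W-\widehat W$. Using the Hoeffding decomposition $\xi_{ij}-\mathbb{E}\xi_{ij}=(\text{term in }E_i)+(\text{term in }E_j)+\widetilde\xi_{ij}$ one obtains $R=\sum_{i<j}\widetilde\xi_{ij}$ with the $\widetilde\xi_{ij}$ pairwise orthogonal and $\|\widetilde\xi_{ij}\|_2\le 1/2$, whence $\mathbb{E}[R]=0$, $\mathbb{E}[R\mid E_i]=0$ for every $i$, and $\mathbb{E}[R^2]=\sum_{i<j}\|\widetilde\xi_{ij}\|_2^2\le\binom n2/4\le n^2/8$. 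Consequently $\Var(\widehat W)=\Var(W)-\mathbb{E}[R^2]$, and the $L^2$ triangle inequality gives $\tau_n:=\sqrt{\Var(\widehat W)}\ge\sqrt{\Var(W)}-n$; in particular $\tau_n\ge a$ whenever $a>0$, while if $a=0$ the asserted bound is vacuous.

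It remains to convert $W\approx\widehat W$ into a Kolmogorov estimate. Applying Esseen's inequality to the independent sum $\sum_i S_i$, with the bound $\mathbb{E}|S_i|^3\le n^3$, gives $\sup_t|\mathbb{P}(\widehat W\le\mathbb{E} W+\tau_n t)-\Phi(t)|\le Cn^4/\tau_n^3$. Then, for fixed $t$ and a free parameter $\varepsilon>0$, write $\mathbb{P}(W-\mathbb{E} W\le\sqrt{\Var W}\,t)\le\mathbb{P}(\widehat W-\mathbb{E} W\le\sqrt{\Var W}\,t+\varepsilon)+\mathbb{P}(|R|>\varepsilon)$, bound $\mathbb{P}(|R|>\varepsilon)\le\mathbb{E}[R^2]/\varepsilon^2\le n^2/(8\varepsilon^2)$ by Chebyshev, insert the Berry--Esseen estimate together with the rescaling $\sqrt{\Var W}/\tau_n=1+O(n/\tau_n)$ (which costs a further $Cn/\tau_n$) and the Lipschitz bound $\Phi(s+\varepsilon/\tau_n)-\Phi(s)\le\varepsilon/(\tau_n\sqrt{2\pi})$, and optimise over $\varepsilon$ (taking $\varepsilon\asymp(n^2\tau_n)^{1/3}$) to pick up a further $Cn^{2/3}/\tau_n^{2/3}$. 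The matching lower bound is symmetric. Using $\tau_n\ge a$ and adding the three contributions $Cn^4/\tau_n^3$, $Cn/\tau_n$, $Cn^{2/3}/\tau_n^{2/3}$ yields the claimed inequality.

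The main obstacle is the variance identity: the collapse of the three one-overlap covariances attached to each triple $\{a,b,c\}$ into the single expression $4\theta_i\theta_j^2\theta_k/[(\theta_i+\theta_j+\theta_k)(\theta_i+\theta_j)(\theta_i+\theta_k)(\theta_j+\theta_k)]$ is a genuine (if elementary) algebraic simplification, and it is what pins down the constant $4$ and the squared middle weight. On the probabilistic side, the only delicate point is making the ``degenerate remainder is negligible'' step quantitative with the correct exponents $a^{-3}$, $a^{-1}$, $a^{-2/3}$; the orthogonality of the $\widetilde\xi_{ij}$, the bound $|S_i|\le n$, and the Chebyshev-plus-smoothing trick are precisely what make this go through.
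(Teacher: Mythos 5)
Your proposal is correct and follows essentially the same route as the paper: the exponential representation, the Hoeffding--H\'ajek projection $W=\widehat W+R$ with the pairwise-orthogonal degenerate terms $\widetilde\xi_{ij}$ (the paper's $\eta_{ij}$) giving $\mathbb{E}[R^2]\le n^2$, the Berry--Esseen bound $Cn^4/\tau_n^3$ for the independent sum, and the Chebyshev-plus-smoothing step optimised at $\varepsilon\asymp(n^2\tau_n)^{1/3}$ to produce the three error terms. The only (cosmetic) difference is that you normalise by $\tau_n=\sqrt{\Var(\widehat W)}\le\sqrt{\Var(W)}$, so the rescaling factor is automatically $\ge 1$ and you avoid the paper's auxiliary bound $\Var(\widehat W)\le 11\,\Var(W)$ needed to control its $a(t)$ term; the deferred variance identity is exactly the triple-overlap covariance computation the paper carries out case by case.
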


The above theorem shows that as long as $\Var(Z) \gg n^{8/3}$, $Z$ is approximately normally distributed. In typical situations, where all the $\theta_i$'s are all of comparable size, the above formula for $\Var(Z)$ shows that it is of order $n^3$, and hence this condition is satisfied. This holds, for example, for the Sukhatme weights as well as for uniform random permutations.

\medskip

In particular, for the Sukhatme weights \(\theta_i = n - i + 1\), using  the standard asymptotic expansion for harmonic numbers
\[\sum_{k=1}^{m} \frac{1}{k} = \log m + \gamma + O\left(\frac{1}{m}\right),\] 
and standard Riemann sum approximations, we get 
\[
\mathbb{E}[\mathrm{occ}(21, \sigma_n)] = n^2 \int_0^1 t \log \frac{1 + t}{2t} \, dt + O(n \log n) = \frac{1 - \log 2}{2} n^2 + O(n \log n).
\]
and
\[
\mathrm{Var}(\mathrm{occ}(21, \sigma_n)) = V \cdot n^3 + O(n^2 \log n),
\]
where (the final approximation is  a numerical approximation)
\[
V=\int_{0<t<u<v<1} \frac{4\,t\,u^{2}\,v}{(t+u+v)(t+u)(t+v)(u+v)} \, dt\,du\,dv\approx 0.0181166.
\]
Some numerical simulations for the result proved in \cref{them:clt-inv} in this specific setting are shown in~\cref{fig:CLT_inv_luce}.

\begin{figure}[h!]
    \centering
    \includegraphics[width=0.6\textwidth]{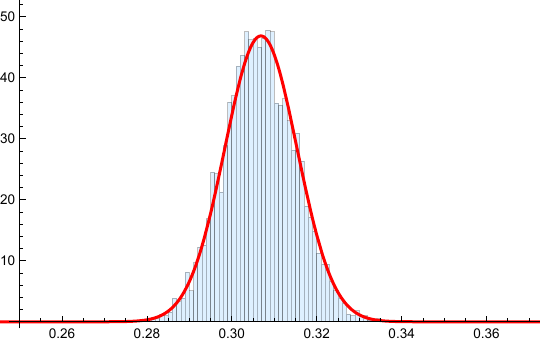}

    \caption{Simulations for the proportion of  inversions $\pocc(21,\sigma_n)=\frac{\occ(21,\sigma_n)}{\binom{n}{2}}$ when $\sigma_n$ is a Luce-distributed permutation of size $n$  with Sukhatme weights $\theta_i=n-i+1$. In blue we show the histogram (renormalized to be a probability distribution) of the data collected from $3000$ random samples of size $n=1000$. In red we plot the density of $\mcl{N}(1-\log(2),\frac{4 \times 0.0181166}{n})$.}
    \label{fig:CLT_inv_luce}
\end{figure}

\begin{proof}[Proof of \cref{them:clt-inv}]

Let $E_1,\ldots,E_n$ be independent random variables, with $E_i$ having an exponential distribution with mean $1/\theta_i$. Let
\[
Z := \sum_{1\le i<j\le n} 1_{\{E_i > E_j\}}.
\]\
Then, as usual, $\occ(21,\sigma_n)\stackrel{\mathrm{d}}{=}Z$ by the exponential representation of the Luce model (see for instance~\cite[Theorem 2.1]{chatterjee2023enumerative}).
Define
\begin{align*}
f_{ij} &:= 1_{\{E_i > E_j\}}, \\ 
g_{ij} &:= \bbE(f_{ij}|E_i) = 1 - e^{-\theta_j E_i}, \\ 
h_{ij} &:= \bbE(f_{ij}|E_j) = e^{-\theta_i E_j}, \\
\mu_{ij} &:= \bbE(f_{ij}) = \frac{\theta_j}{\theta_i+\theta_j}. 
\end{align*}
Then note that 
\begin{align*}
\bbE(Z) &= \sum_{1\le i< j\le n}\mu_{ij} = \sum_{1\le i< j\le n} \frac{\theta_j}{\theta_i+\theta_j}. 
\end{align*}
Next, note that
\begin{align*}
\Var(Z) &= \sum_{i<j,\, i'<j'}\mathrm{Cov}(f_{ij}, f_{i'j'}). 
\end{align*}
Take any $i<j$ and $i'<j'$. If $\{i,j\}\cap \{i',j'\} =\emptyset$, then clearly, $\mathrm{Cov}(f_{ij}, f_{i'j'})=0$. If $|\{i,j\}\cap \{i',j'\}|=2$, then $i=i'$ and $j=j'$. In this case, the covariance is $\mu_{ij}(1-\mu_{ij})$. Lastly, suppose that $|\{i,j\}\cap \{i',j'\}|=1$. There are several possibilities. First, we may have $i=i'<j<j'$. In this case,
\begin{align*}
\bbE(f_{ij}f_{i'j'}) &= \bbP(E_i > E_j> E_{j'})  + \bbP(E_i > E_{j'} > E_j) \\
&= \frac{\theta_{j'} \theta_j }{(\theta_i +\theta_j+\theta_{j'})(\theta_i +\theta_j)}+ \frac{\theta_{j} \theta_{j'} }{(\theta_i +\theta_j+\theta_{j'})(\theta_i +\theta_{j'})}.
\end{align*}
Thus,
\begin{align*}
\mathrm{Cov}(f_{ij}, f_{i'j'}) &= \bbE(f_{ij}f_{i'j'}) - \bbE(f_{ij})\bbE(f_{i'j'})\\
&= \frac{\theta_{j'} \theta_j }{(\theta_i +\theta_j+\theta_{j'})(\theta_i +\theta_j)}+ \frac{\theta_{j} \theta_{j'} }{(\theta_i +\theta_j+\theta_{j'})(\theta_i +\theta_{j'})} - \frac{\theta_j\theta_{j'}}{(\theta_i+\theta_j)(\theta_i + \theta_{j'})}\\
&= \frac{\theta_i\theta_j \theta_{j'}}{(\theta_i +\theta_j+\theta_{j'})(\theta_i +\theta_j)(\theta_i +\theta_{j'})}.
\end{align*}
The same holds if $i=i' < j' < j$. Next, suppose that $i < j = i' < j'$. In this case,
\begin{align*}
\mathrm{Cov}(f_{ij}, f_{i'j'}) &= \bbP(E_i > E_j> E_{j'}) - \bbP(E_i>E_j)\bbP(E_j>E_{j'}) \\
&= \frac{\theta_{j'} \theta_j }{(\theta_i +\theta_j+\theta_{j'})(\theta_i +\theta_j)} - \frac{\theta_j \theta_{j'}}{(\theta_i + \theta_j)(\theta_j + \theta_{j'})}\\
&= -\frac{\theta_i\theta_j\theta_{j'}}{(\theta_i + \theta_j + \theta_{j'})(\theta_i +\theta_j)(\theta_j + \theta_{j'})}.
\end{align*}
The next possibility is $i' < i < j=j'$. In this case
\begin{align*}
\bbE(f_{ij} f_{i'j'}) &= \bbP(E_{i'} > E_i> E_j) + \bbP(E_i> E_{i'} > E_j)\\
&= \frac{\theta_j \theta_i}{(\theta_{i'} + \theta_i + \theta_j)(\theta_{i'}+\theta_i)} + \frac{\theta_j \theta_{i'}}{(\theta_{i'} + \theta_i + \theta_j)(\theta_{i'}+\theta_i)}\\
&= \frac{\theta_j}{\theta_{i'}+\theta_i+\theta_j}. 
\end{align*}
Thus,
\begin{align*}
\mathrm{Cov}(f_{ij}, f_{i'j'}) &= \frac{\theta_j}{\theta_{i'}+\theta_i+\theta_j} - \frac{\theta_j^2}{(\theta_i+\theta_j)(\theta_{i'}+\theta_j)}\\
&= \frac{\theta_i\theta_{i'}\theta_j}{(\theta_{i'}+\theta_i+\theta_j) (\theta_i+\theta_j)(\theta_{i'}+\theta_j)}.
\end{align*}
The same holds if $i<i'<j=j'$. Finally, the last possibility is $i'<j'=i<j$. This is symmetrical with the case $i<j=i'<j'$, which shows that
\begin{align*}
\mathrm{Cov}(f_{ij}, f_{i'j'}) &= -\frac{\theta_{i'}\theta_{j'}\theta_{j}}{(\theta_{i'} + \theta_{j'} + \theta_{j})(\theta_{i'} +\theta_{j'})(\theta_{j'} + \theta_{j})}.
\end{align*}
Combining all cases, we get
\begin{align}\label{eq:varZ}
\Var(Z) &= \sum_{i<j} \mu_{ij}(1-\mu_{ij}) + \sum_{i<j<k} \frac{2\theta_i\theta_j\theta_k}{\theta_i+\theta_j+\theta_k}\biggl(\frac{1}{(\theta_i+\theta_j)(\theta_i+\theta_k)} \notag\\
&\qquad \qquad \qquad \qquad \qquad \qquad - \frac{1}{(\theta_i+\theta_j)(\theta_j+\theta_k)} + \frac{1}{(\theta_i+\theta_k)(\theta_j+\theta_k)} \biggr)\notag\\
&= \sum_{i<j} \frac{\theta_i\theta_j}{(\theta_i+\theta_j)^2} + \sum_{i<j<k} \frac{4\theta_i\theta_j^2\theta_k}{(\theta_i+\theta_j+\theta_k)(\theta_i+\theta_j)(\theta_i+\theta_k)(\theta_j+\theta_k)}. 
\end{align}
Now, let
\begin{align*}
\eta_{ij} := f_{ij}-g_{ij}-h_{ij}+\mu_{ij}. 
\end{align*}
Clearly, $\bbE(\eta_{ij})=0$. If $\{i,j\}\cap \{i',j'\}=\emptyset$, then $\eta_{ij}$ and $\eta_{i'j'}$ are independent, and so, $\bbE(\eta_{ij}\eta_{i'j'})=0$. We claim that $\bbE(\eta_{ij}\eta_{i'j'})=0$ even if $|\{i,j\}\cap\{i',j'\}|=1$. To prove this, consider the case $i=i'<j<j'$. Then 
\begin{align*}
\bbE(\eta_{ij}|(E_k)_{k\ne j}) &= \bbE(f_{ij}|(E_k)_{k\ne j}) - g_{ij} - \bbE(h_{ij}|(E_k)_{k\ne j}) + \mu_{ij}\\
&= \bbE(f_{ij}|E_i) - g_{ij} - \bbE(h_{ij})+\mu_{ij} = 0.
\end{align*}
Since $\eta_{i'j'}$ is a function of $(E_k)_{k\ne j}$ in this case, this proves that $\bbE(\eta_{ij}\eta_{i'j'})=0$. The other cases are proved similarly. Thus, we conclude that 
\begin{align}\label{etaineq}
\bbE\biggl[\biggl(\sum_{i<j} \eta_{ij}\biggr)^2\biggr] = \sum_{i<j} \bbE(\eta_{ij}^2)\le n^2. 
\end{align}
But,
\begin{align*}
\sum_{i<j} \eta_{ij} &= Z- \bbE(Z) - W,
\end{align*}
where 
\begin{align*}
W &= \sum_{i<j} (g_{ij}+h_{ij}-2\mu_{ij})= \sum_{i=1}^n W_i,
\end{align*}
with 
\[
W_i := \sum_{j=i+1}^n (e^{-\theta_j E_i} - \mu_{ji}) - \sum_{j=1}^{i-1} (e^{-\theta_j E_i}-\mu_{ji}). 
\]
Note that $W_1,\ldots,W_n$ are independent centered random variables, and $|W_i|\le n$ for each $i$. Thus, by the Berry--Esseen theorem for sums of independent and non-identically distributed centered random variables,
\begin{align*}
\sup_{t\in \bbR} \biggl|\bbP\biggl(\frac{W}{\sqrt{\Var(W)}}\le t\biggr) - \Phi(t)\biggr|&\le \frac{\sum_{i=1}^n \bbE|W_i|^3}{2(\Var(W))^{3/2}}\le \frac{n^4}{2(\Var(W))^{3/2}},
\end{align*}
where we recall that $\Phi$ denotes the standard normal cumulative distribution function. Thus, by \eqref{etaineq}, we have that for any $t\in \bbR$ and $\varepsilon >0$,
\begin{align*}
&\bbP(Z\le \bbE(Z) + t\sqrt{\Var(Z)}) \\
&\le \bbP(W\le (t+\varepsilon)\sqrt{\Var(Z)}) + \bbP(|Z-\bbE(Z)-W|> \varepsilon \sqrt{\Var(Z)})\\
&\le \Phi\biggl(\frac{(t+\varepsilon)\sqrt{\Var(Z)}}{\sqrt{\Var(W)}}\biggr) + \frac{n^4}{2(\Var(W))^{3/2}} + \frac{n^2}{\varepsilon^2\Var(Z)}\\
&\le \Phi(t) +a(t)\biggl|\frac{\sqrt{\Var(Z)}}{\sqrt{\Var(W)}} - 1\biggr|+ \frac{\varepsilon\sqrt{\Var(Z)}}{2\sqrt{\Var(W)}}+ \frac{n^4}{2(\Var(W))^{3/2}} + \frac{n^2}{\varepsilon^2\Var(Z)},
\end{align*}
where
\[
a(t) := |t| \max\biggl\{|\Phi'(s)|: s \text{ lies between } t \text{ and } \frac{t \sqrt{\Var(Z)}}{\sqrt{\Var(W)}}\biggr\}. 
\]
Optimizing over $\varepsilon>0$, and applying \eqref{etaineq} through the inequality 
\[
|\sqrt{\Var(Z)} - \sqrt{\Var(W)}|\le \sqrt{\Var(Z-W)} \le n,
\]
we get
\begin{align}\label{semifinal}
&\bbP(Z\le \bbE(Z) + t\sqrt{\Var(Z)}) \notag \\
&\le \Phi(t) + a(t) \biggl|\frac{\sqrt{\Var(Z)}}{\sqrt{\Var(W)}} - 1\biggr| + \frac{n^4}{2(\Var(W))^{3/2}} + \frac{C_0 n^{2/3}}{(\Var(W))^{1/3}}\notag\\
&\le \Phi(t) + \frac{a(t)n}{\sqrt{\Var(W)}}  + \frac{n^4}{2(\Var(W))^{3/2}} + \frac{C_0 n^{2/3}}{(\Var(W))^{1/3}},
\end{align}
where $C_0$ is a universal constant. 
Next, by \eqref{etaineq}, we have 
\begin{align*}
\bbE[(Z-\bbE(Z)-W)^2] &= \sum_{i<j} \bbE(\eta_{ij}^2)= \sum_{i<j} \Var(f_{ij}-g_{ij}-h_{ij})\\
&\le 3\sum_{i<j} (\Var(f_{ij})+\Var(g_{ij}) + \Var(h_{ij}))\\
&\le 9\sum_{i<j} \Var(f_{ij}),
\end{align*}
where, in the second inequality, we used the law of total variance.
Now recall from \eqref{eq:varZ} that
\[
\Var(Z)
= \sum_{i<j}\Var(f_{ij})
+ \sum_{i<j<k}\frac{4\theta_i\theta_j^2\theta_k}{(\theta_i+\theta_j+\theta_k)(\theta_i+\theta_j)(\theta_i+\theta_k)(\theta_j+\theta_k)}.
\]
Since the second term is nonnegative, we conclude that
\[
\sum_{i<j}\Var(f_{ij}) \le \Var(Z),
\]
and so
\begin{align*}
\bbE[(Z-\bbE(Z)-W)^2] \le 9 \Var(Z). 
\end{align*}
%Next, by \eqref{etaineq} and the fact that $\mathrm{Cov}(f_{ij}, f_{i'j'})\ge 0$ for all $i<j$ and $i'<j'$ (as proved earlier), we have 
%\begin{align*}
%\bbE[(Z-\bbE(Z)-W)^2] &= \sum_{i<j} \bbE(\eta_{ij}^2)= \sum_{i<j} \Var(f_{ij}-g_{ij}-h_{ij})\\
%&\le 3\sum_{i<j} (\Var(f_{ij})+\Var(g_{ij}) + \Var(h_{ij}))\\
%&\le 9\sum_{i<j} \Var(f_{ij}) \le 9 \Var(Z). 
%\end{align*}
Thus,
\begin{align*}
\Var(W) &\le 2\Var(Z) + 2\Var(Z-W)\le 11 \Var(Z). 
\end{align*}
A consequence of this inequality is that $a(t)$ is bounded above by a universal constant. Thus, by~\eqref{semifinal}, we get
\begin{align*}
\bbP(Z\le \bbE(Z) + t\sqrt{\Var(Z)}) &\le \Phi(t) + \frac{n^4}{2(\Var(W))^{3/2}} \\
&\qquad + \frac{C_1n}{\sqrt{\Var(W)}}+ \frac{C_0n^{2/3}}{(\Var(W))^{1/3}},
\end{align*}
where $C_1$ is another universal constant. A similar argument shows that
\begin{align*}
\bbP(Z\le \bbE(Z) + t\sqrt{\Var(Z)}) &\ge \Phi(t) - \frac{n^4}{2(\Var(W))^{3/2}} \\
&\qquad - \frac{C_1n}{\sqrt{\Var(W)}}- \frac{C_0n^{2/3}}{(\Var(W))^{1/3}}. 
\end{align*}
Combining these two bounds, and applying \eqref{etaineq} via the inequality 
\[
\sqrt{\Var(W)} \ge \sqrt{\Var(Z)} - \sqrt{\Var(Z-W)}\ge \sqrt{\Var(Z)} - n
\]
completes the proof.
\end{proof}

\section{Comments and open questions}\label{sect:final}

The Luce model is simply the most basic model of sampling from an urn without replacement. Thus, any reasonable question is worth studying. Extensive references in this direction are in \cite{chatterjee2023enumerative}.

\subsection{Statistical applications}

We have not emphasized it here, but as an $n - 1$ parameter statistical model, the Luce model is frequently fit to permutation data. The R package \texttt{PlackettLuce} is open-source R code with its own examples and guidebook; see \url{https://hturner.github.io/PlackettLuce/} or \url{https://cran.r-project.org/web/packages/PlackettLuce/index.html}. 

Of course, other models are used as well. The book by Marden~\cite{marden1996analyzing} is a useful source. This is grist for our mill. How can one test whether the Luce model is better than the Mallows model? Here, if $d(\cdot, \cdot)$ is a metric on $\mcl S_n$, the Mallows model is:
\[
\bbP_{\beta,\sigma_0}(\sigma) = \frac{\mathrm{e}^{-\beta \cdot d(\sigma, \sigma_0)}}{Z}
\]
with $\beta$ a scale parameter and $\sigma_0$ a location parameter to be estimated from the data ($Z$ is a normalization constant).

In a fascinating study~\cite{Alimohammadi}, the Luce and Mallows models were each fitted to basketball ranking data. Mallows seemed better suited for predicting new data. This is surprising since the Luce model has $n-1$ free parameters, and the version of Mallows used had only two free parameters (they used an $\ell^p$-metric and fit $p$, $\beta$, and $\sigma_0$). But Mallows has a discrete parameter $\sigma_0$. How should that be factored in?
This is \textbf{not} a standard statistical testing problem. One way to distinguish models is to look at ``features'', such as descents and inversions, where limit theory can be used. This motivates the development of new limit theorems for both models. Our paper contributes to that. The problems below show how much remains to be understood.

\subsection{Cycle structure}

Here is a frustrating open problem for the Luce model: Pick $\sigma$ from the Luce distribution on $S_n$. What is the limiting distribution of $c_1(\sigma)$ -- the number of fixed points of $\sigma$, in the presence of a permuton limit $\mu$ (with density $\rho$) for $\sigma$?

It is natural to conjecture that $c_1(\sigma)$ should be approximately $\text{Poisson}(\theta)$, with
\[
\theta = \int_0^1 \rho(x,x) \, dx,
\]
when the above integral is finite. But one can immediately see that for the standard Sukhatme weights $\theta_i=n-i+1$, the density computed in \eqref{eq:suk-dens} (recall that it is singular at $(1,1)$) leads to an infinite integral. In this case, we believe the number of fixed points tends to infinity, but do the number of fixed points still converge to the Poisson distribution after an appropriate rescaling? Some numerical simulations are shown in \cref{fig:fixed-points}.

Similarly, the joint distribution of $\{c_i(\sigma)\}_i$, where $c_i(\cdot)$ denotes the number of $i$-cycles, and even the total number of cycles, is open.

\begin{figure}[h!]
    \centering
    \includegraphics[width=0.49\textwidth]{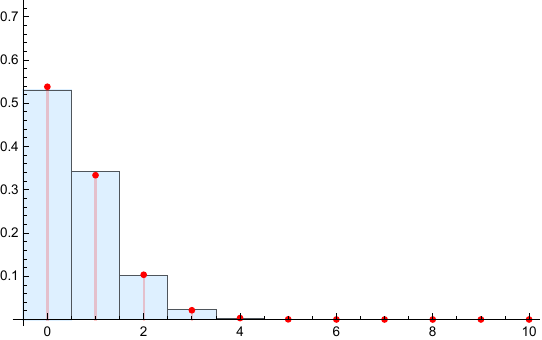}
    \includegraphics[width=0.49\textwidth]{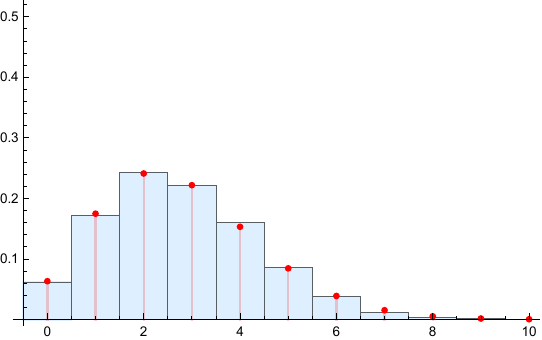}
    \includegraphics[width=0.49\textwidth]{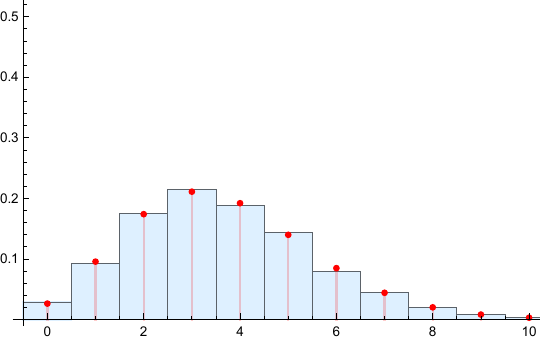}
    \includegraphics[width=0.49\textwidth]{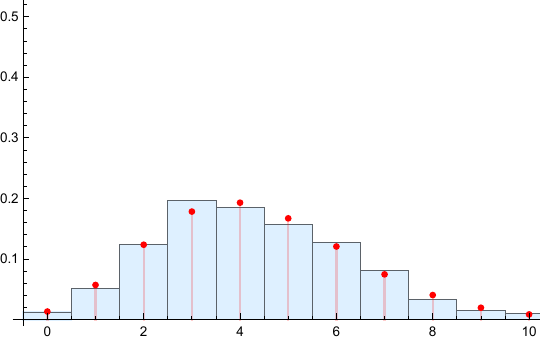}

    \caption{Simulations for the number of fixed points $c_1(\sigma_n)$ when $\sigma_n$ is a Luce-distributed permutation of size $n$. \textbf{Diagram on the top-left:} Here the weights are $\theta_i=\frac{i}{n}$, and so, thanks to \cref{prop:density}, the corresponding limiting permuton has density $\rho(1-x,y)$ with $\rho(\cdot,\cdot)$ as in \eqref{eq:suk-dens}. In particular, $\int_0^1 \rho(1-x,x) \,\mathrm{d}x\approx0.629$. In blue, we show the histogram (renormalized to be a probability distribution) of the data collected from $3000$ random samples of size $n=1000$. In red we plot the distribution function of a $\text{Poisson}(0.629)$. \textbf{Remaining three diagrams:} Here the weights are the standard Sukhatme weights $\theta_i=\frac{n-i+1}{n}$, and so, thanks to \cref{prop:density}, the limiting permuton has density $\rho(x,y)$ as shown in \eqref{eq:suk-dens}. In particular, $\int_0^1 \rho(x,x) = +\infty$ in this case. In the three diagrams (top-right/bottom-left/bottom-right), we show the histograms (renormalized to be a probability distribution) of the data collected from $4000/3000/2000$ random samples of size $n=1000/10000/100000$. Note that the number of fixed points grows with $n$. In red we plot the distribution functions of the $\text{Poisson}(\theta)$ with $\theta=2.76/3.64/4.33$ (equal to the mean of the data). The data seems to suggest that the number of fixed points is still Poisson distributed after appropriate rescaling.}
    \label{fig:fixed-points}
\end{figure}

\subsection{Inverses and longest increasing subsequences}

For a $\sigma \in \mcl S_n$, if $\sigma(i)$ is the label of the card at position $i$, then $\sigma^{-1}(i)$ is the position of the card labeled $i$. How is $\sigma(i)$ distributed if $\sigma$ has a Luce distribution? In this paper, we looked at ``typical'' values of $i$; however, what about the case $i=1$?

This is important in applications, e.g., to horse racing. Of course, if $n$ is small (e.g., $n = 5$ or $7$ in horse racing applications), the computer can blast it out. But we do not know limit theorems establishing what the limiting distribution of the label of the card at position 1 is (but recall that \cite[Section 4]{chatterjee2023enumerative} gives the distribution of the position of the card labeled 1).

Permuton limit theory does not seem very useful here. This underlines a general question:

\begin{quote}
What functions on permutations are continuous in the permuton topology?
\end{quote}

The huge literature on probabilistic combinatorics is filled with results which give open questions for the Luce model.
One that we would love to see solved is:
\begin{quote}
What is the limit distribution for the length of the longest increasing subsequence for a Luce-distributed permutation?
\end{quote}
The theory developed in~\cite{dubach2023locally} seems to be a good place to start.

\bibliography{cibib,cibib2,pp}
\bibliographystyle{hmralphaabbrv}

\end{document}